\title{The universal coCartesian fibration}
\author{Denis-Charles Cisinski and Hoang Kim Nguyen}
\date{}
\begin{document}
\maketitle
\begin{abstract}
\noindent\textbf{\textsf{Abstract.}}
We give a new proof of
the straightening/unstraightening correspondence
by proving a generalization of the univalence property
of the universal coCartesian fibration.
\end{abstract}
\tableofcontents


\section*{Introduction}
A fundamental tool of \(\infty\)-category theory
is that there is an \(\infty\)-category \(\Q\) of small \(\infty\)-categories
so that coCartesian fibrations with small fibers of the form $X\to A$ correspond
to functors \(A\to\Q\). And this correspondence is implemented through an equivalence of categories.
This is
what is coined as straightening/unstraightening by Lurie in his foundational work.
Whether it is in his published monograph \emph{Higher Topos Theory} \cite{lurie}
or in the \emph{Kerodon} \cite{kerodon},
the \(\infty\)-category \(\Q\) of small \(\infty\)-categories is constructed
in two steps: we construct a category enriched in Kan complexes of small \(\infty\)-categories
and apply the homotopy coherent nerve to the latter. From this point of view,
proving the straightening/unstraightening
correspondence will thus necessarily involve dealing with interpreting at least some constructions
of \(\infty\)-category theory through the homotopy coherent nerve or its left adjoint.
We propose here an alternative construction in the setting of quasi-categories which does not use
any kind of homotopy coherent nerve. It is an adaptation of simpler version of
the straightening/unstraightening
correspondence in the case where the fibers are \(\infty\)-groupoids already developed in \cite{cisinskibook}, which is itself an adaptation of the construction of a univalent universe of Kan complexes in the sense of homotopy type theory by Kapulkin, Lumsdaine and Voevodsky \cite{klvsimplicial}, and
further generalized by Shulman \cite{shulmanelegant,shulman}.
Similar constructions and proofs could also be performed directly in the language of complete Segal spaces,
completing the work of Rasekh \cite{rasekh}. Whereas the point of view on the
\(\infty\)-category of small \(\infty\)-categories as a homotopy coherent nerve has the advantage of giving rather quickly a concrete description on this fundamental object
(and this can be done rather efficiently, as explained for instance in \cite{short_StrUnStr}) this kind of construction is not fully satisfying: it is now clear that the theory of \(\infty\)-categories goes much beyond being some form of theory of categories enriched in homotopy types and there is no reason to impose that we have a set (nor a \(0\)-truncated object) of objects. Indeed, we already have several
examples of new contexts (new topoi) in which we want to do mathematics, including applying homotopical methods such as \(\infty\)-category-theoretic constructions, e.g. equivariant stable homotopy theory, global homotopy theory, or condensed mathematics,
and a systematic study of the semantic interpretation of \(\infty\)-category theory in any \(\infty\)-topos is already developed in the work of
Martini and Wolf \cite{martini1,martini2,MW1,MW2}.
There are already formal theories of \(\infty\)-category as well:
in the direction of cosmoi (from the point of view of categories enriched in quasi-categories), as in the work of Riehl and Verity~\cite{RV}
(with its own version of
straightening/unstraightening correspondence \cite{RVcocart}), or in the direction of
directed type theory, as in the work of North~\cite{north} and Riehl and Shulman~\cite{RS}. Our contribution here is to provide a formulation of
the property of \emph{directed univalence} which is a property which is
not obviously equivalent to the straightening/unstraightening
correspondence but seems to encode all the information needed to
deduce it, while it makes sense in any formal context in which
we can express the language of \(\infty\)-category theory. We hope this
can be useful in practice as well as to study the foundations of
directed type theory.

In this article, we discuss the universal coCartesian fibration
once again. We will mainly focus on its construction in the classical setting
of \(\infty\)-category theory which is nowadays the one of quasi-categories.
From this perspective, this may be seen
as a complement of the first name author's book~\cite{cisinskibook}, and
as an addition to the general literature giving an alternative access to such a fundamental construction, in the spirit of \cite{short_StrUnStr}, which we hope to be useful: we have tried our best to make these notes usable
by any reader eager to learn and/or use this theory in their own research.
But we also focus secretly on a reformulation
of the straightening/unstraightening correspondence
that could easily be formulated (or implemented) through a formal language
in any abstract directed type theory: directed univalence. The way we formulate
it is more general than the one considered by Licata and Weaver~\cite{LW}: if
we take apart the constructive aspects of their work, what they do is
more in the spirit of \cite[§5.2--5.3]{cisinskibook}, since they focus on
left fibrations (coCartesian fibration with fibers in \(\infty\)-groupoids).
In the context of \(\infty\)-category theory,
we prove that the universal
coCartesian fibration defined tautologically (Definition \ref{def:univcoCart})
is directed univalent (Theorem \ref{diruniv}). More generally,
we define what it means for a coCartesian fibration
to be \emph{directed univalent}
(Definition \ref{def:directed univalent}) and
prove that a coCartesian fibration is directed
univalent if and only if its classifying functor is a full embedding of its
codomain into the \(\infty\)-category theory of \(\infty\)-categories
(Theorem \ref{thm:directedunivfull}). Since $\infty$-groupoids form
a full subcategory of the \(\infty\)-category theory of \(\infty\)-categories,
this gives yet another proof that the universal left fibration is directed
univalent, for instance (thus recovering some results of \cite{cisinskibook}).
We also explain how to deduce the straightening/unstraightening correspondence
in the following form: given any small \(\infty\)-category \(A\), there is
a canonical functor \[
\gamma_A\colon \mSet/A^\sharp \to \Fun(A,\Q)
\]
from the category \(\mSet/A^\sharp\) of marked simplicial sets over \(A\)
that exhibits \(\Fun(A,\Q)\) with the \(\infty\)-category theoretic
localization of \(\mSet/A^\sharp\) at the covariant weak equivalences
(the class of weak equivalences of the covariant model structure, the
fibrant objects of which precisely are the coCartesian fibrations over
\(A\)).

In this paper, we will start almost from scratch: we mainly
need basic results on the properties of coCartesian fibrations
that are explained in Lurie's \emph{Higher Topos Theory}~\cite[§2.4]{lurie}.
Many of the arguments in this paper
are adaptations of arguments developed
in detail in \cite[Chapter~5]{cisinskibook} to the setting of marked
simplicial sets. In particular,
we will also use covariant model structures on marked simplicial sets
whose fibrant objects are the coCartesian fibrations.
Such model structures are described in \cite[§3.1]{lurie}
and are revisited by the
second named author in \cite{contrakim}. Technical but
fundamental properties of these model structures (expressing
in secret exponentiability of coCartesian fibrations as well
as Beck-Chevalley properties of Kan extensions along coCartesian
fibrations) are furthermore developed in a companion paper by
the second named author~\cite{cocartesiankim}; such results will
be recalled explicitly when needed.
Only in the last section, in order to deduce
the straightening/unstraightening correspondence (Theorem~\ref{thm:straightening}),
will we use less elementary results about localizations of
model structures from the last chapter of \cite{cisinskibook}.
We also explain how to interpret the straightening/unstraightening correspondence
in a purely internal way (Theorem \ref{thm:straightening2}).

\emph{Acknowledgments}. We benefited from valuable discussions with Emily Riehl
and Nima Rasekh
(who insisted that we should highlight Theorem \ref{thm:directedunivfull}
if we want to convince any one that we indeed speak of some kind of univalence). The second named author greatly benefited from discussions with Benjamin D\"unzinger and Johannes Glossner concerning the universal morphism classifier.
Most of the results of this paper were announced in talks a couple of years
ago, in the \emph{HoTT Electronic Seminar}
and in the workshop \emph{GeoCat 2020} (an IJCAR-FSCD-2020 satellite workshop),
which were great opportunities
to discuss our results. This article was written while both authors were members
of the SFB~1085 ``Higher Invariants'' funded by the Deutsche Forschungsgemeinschaft (DFG).


\section{Reminder on coCartesian fibrations}

\begin{definition}\label{cartedge}
Let $p\colon X \to A$ be an inner fibration of simplicial sets and let $f\colon x\to y$ be an edge in \(X\). Then \(f\) is called \(p\)-coCartesian if for all $n \geq 2$ and all lifting problems of the form
	\[
	\begin{tikzcd}
	\Delta^{\{0,1\}}\drar{f}\dar & \\
	\Lambda^n_n \rar \dar & X\dar{p}\\
	\Delta^n \rar \urar[dashed] & A.
	\end{tikzcd}
	\]
there exists a lift as indicated. The map \(p\) is a coCartesian fibration if for all lifting problems of the form
\[
\begin{tikzcd}
\Delta^{\{0\}} \rar \dar & X\dar{p}\\
\Delta^1 \rar \urar[dashed] & A,
\end{tikzcd}
\]
there exists a lift as indicated, which is $p$-coCartesian.
\end{definition}

\begin{lemma}\label{cocartovereq}
Let \(p\colon X\to A\) be an inner fibration between \(\infty\)-categories. Let \(f\colon \Delta^1 \to X\) be a morphism. Then the following are equivalent:
\begin{itemize}
\item \(f\) is an equivalence,
\item \(f\) is coCartesian and \(p(f)\) is an equivalence.
\end{itemize}
\end{lemma}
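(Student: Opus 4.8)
The plan is to prove the two implications separately, using the standard characterization of equivalences in an $\infty$-category via lifting against the inclusions $\Lambda^n_n \hookrightarrow \Delta^n$ (equivalently, the fact that an edge is invertible iff it lifts against all such right-horn inclusions, or via the theory of right/left-invertible edges). The key observation is that the coCartesian lifting condition in Definition~\ref{cartedge} is almost exactly the lifting condition that witnesses an edge as being ``left invertible'' relative to the base, so that once we know $p(f)$ is an equivalence, we can transport the invertibility of $p(f)$ upstairs.

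**First I would** treat the easy direction: if $f\colon x \to y$ is an equivalence in $X$, then $p(f)$ is an equivalence in $A$ since $p$ preserves equivalences (functors between $\infty$-categories do). To see that $f$ is $p$-coCartesian, I would note that for any $n \geq 2$, a lifting problem of the form in Definition~\ref{cartedge} with the edge $\Delta^{\{0,1\}} \to \Lambda^n_n$ sent to $f$ can be solved: since $f$ is an equivalence, the map $\Lambda^n_n \to X$ extends along $\Lambda^n_n \hookrightarrow \Delta^n$ (this is the characterization of equivalences as edges that can be ``filled'' against outer horns $\Lambda^n_n$ whose initial edge is the given one), and by the inner fibration property of $p$ together with a compatibility argument, this fill can be chosen over the given $\Delta^n \to A$. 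Actually more carefully: one uses that $p$ is an inner fibration and that the composite lifting problem can be solved because $f$ being an equivalence means $X \to X^{\Delta^{\{0,1\}}} \times_{\dots}$ behaves well; I would cite the relevant lemma from \cite[§2.4]{lurie} that an edge $f$ in an $\infty$-category is invertible iff for all $n\geq 2$ every extension problem $\Lambda^n_n \to X$ restricting to $f$ on $\Delta^{\{0,1\}}$ has a solution, and then relativize it.

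**For the converse**, suppose $f$ is $p$-coCartesian and $p(f)$ is an equivalence in $A$. I want to show $f$ is an equivalence. The plan is to construct a homotopy inverse directly. Since $p(f)$ is an equivalence, choose $g'\colon p(y) \to p(x)$ with $g' p(f) \simeq \mathrm{id}$ and a $2$-simplex in $A$ witnessing this. Using the coCartesian property of $f$, lift this data: the coCartesian condition lets me fill a horn $\Lambda^2_2 \to X$ whose $\{0,1\}$-edge is $f$ and whose image in $A$ is the chosen $2$-simplex, producing an edge $g\colon y \to x$ in $X$ over $g'$ together with a $2$-simplex exhibiting $gf \simeq \mathrm{id}_x$ (up to the identification in the fiber). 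So $f$ has a left inverse $g$. Then I would observe that $g$ is itself $p$-coCartesian: this follows because $p(g) = g'$ is an equivalence and $g$ sits in a retraction with the equivalence-like data just produced — alternatively, apply the known stability properties of coCartesian edges under the two-out-of-three style arguments from \cite[§2.4]{lurie}. Once $g$ is coCartesian with $p(g)$ an equivalence, the same construction gives $g$ a left inverse $h$, and the usual $2$-out-of-$6$ / retract argument in $\mathrm{ho}(X)$ forces $g$ to be an equivalence, hence $f$ (being left-invertible with left inverse an equivalence) is an equivalence too.

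**The hard part will be** the careful bookkeeping in the converse direction: making sure that the lifts produced by the coCartesian property are genuinely compatible over $A$ and assemble into the data of an actual equivalence in $X$ (not merely in each fiber), and invoking the right stability statement for coCartesian edges. I expect the cleanest route is to avoid building the inverse by hand and instead cite two facts from \cite[§2.4.1]{lurie}: (i) coCartesian edges are stable under the relevant composition/cancellation, and (ii) an edge lying over an equivalence that is coCartesian is automatically a ``$p$-equivalence'' in a suitable sense; combined with the fact that over a point (i.e.\ when $A = \Delta^0$) coCartesian edges are precisely equivalences, base-changing along $p(f)\colon \Delta^1 \to A$ and using that $\Delta^1$ is then equivalent to $\Delta^0$ reduces the statement to this known special case. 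That base-change reduction is likely the slickest argument and sidesteps most of the explicit horn-filling.
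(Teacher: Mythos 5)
Your proposal is essentially correct, but it takes a genuinely different route from the paper: the paper states Lemma \ref{cocartovereq} with no proof at all, treating it as one of the basic facts from \cite[\S 2.4]{lurie} (it is the dual of Proposition 2.4.1.5 there), whereas you give a direct argument. Note that your ``easy'' direction is not really independent of that citation: the step where you ``relativize'' Joyal's special outer horn filling to lifts over a prescribed simplex of \(A\) is precisely the content of the Lurie result, so there you are citing rather than proving, which is consistent with how the paper itself uses \cite[\S 2.4]{lurie}. Your converse, however, is a genuine and correct elementary proof: choose a \(2\)-simplex of \(A\) witnessing \(g'\,p(f)\simeq \mathrm{id}\), lift the corresponding outer \(2\)-horn using the coCartesian property of \(f\) to get \(g\colon y\to x\) over \(g'\) with \(g f\simeq \mathrm{id}_x\); then the left-cancellation statement (this is exactly Lemma \ref{twothreecocartesian}, i.e.\ Proposition 2.4.1.7 of \cite{lurie}, applied to the triangle with \(f\) coCartesian and \(\mathrm{id}_x\) coCartesian) shows \(g\) is coCartesian over the equivalence \(g'\); repeating the construction gives a left inverse of \(g\), and the usual argument in \(\mathrm{ho}(X)\) (a map with a left and a right inverse is invertible) makes \(f\) an equivalence. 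What your route buys is a self-contained proof of the nontrivial implication from Lemma \ref{twothreecocartesian} alone; what the paper's route buys is brevity, deferring everything to Lurie.

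Two bookkeeping corrections you should make. First, with the standard coCartesian convention the relevant horns are \(\Lambda^n_0\) with the edge \(\Delta^{\{0,1\}}\) carried to \(f\); your ``\(\Lambda^2_2\)-horn whose \(\{0,1\}\)-edge is \(f\)'' does not parse, since \(\Lambda^2_2\) does not contain the edge \(\Delta^{\{0,1\}}\) (Definition \ref{cartedge} as printed pairs \(\Lambda^n_n\) with \(\Delta^{\{0,1\}}\), which is a slip of the paper you inherited). Second, in your proposed ``slick'' reduction, \(\Delta^1\) is not categorically equivalent to \(\Delta^0\); the correct version extends \(p(f)\) to a map \(J\to A\) (possible because \(p(f)\) is an equivalence and \(A\) is an \(\infty\)-category) and pulls back along that, and even then one still needs the case of the terminal base, which is exactly your left-inverse argument, so nothing is gained over your main line of proof.
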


\begin{lemma}\label{twothreecocartesian}
Let \(X\to A\) be an inner fibration between \(\infty\)-categories. Let
\[
\begin{tikzcd}
{} & \cdot \drar{f} &\\
\cdot \urar{g} \ar{rr}{h} & & \cdot
\end{tikzcd}
\]
be a commutative triangle in \(X\). Suppose \(g\) is coCartesian, then \(f\) is Cartesian if and only if \(h\) is Cartesian.
\end{lemma}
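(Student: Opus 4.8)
The plan is to derive the statement from the characterisation of $p$-coCartesian edges by homotopy cartesian squares of mapping spaces together with the pasting law for homotopy cartesian squares; in the notation below the assertion to be proved is that, if $g$ is $p$-coCartesian, then $f$ is $p$-coCartesian if and only if $h$ is $p$-coCartesian (this is \cite[Prop.~2.4.1.7]{lurie}). Write the triangle as $g\colon x\to y$, $f\colon y\to z$ and $h=f\circ g\colon x\to z$, and write $p(x),p(y),p(z)$ and $p(g),p(f),p(h)$ for their images in $A$. We use the fact (recorded in \cite[\S2.4]{lurie}, and compatible with Definition~\ref{cartedge} and Lemma~\ref{cocartovereq}) that an edge $e\colon a\to b$ of $X$ is $p$-coCartesian if and only if, for every object $w$ of $X$, the square
\[
\begin{tikzcd}
\operatorname{Map}_X(b,w)\rar{e^{*}}\dar & \operatorname{Map}_X(a,w)\dar\\
\operatorname{Map}_A(p(b),p(w))\rar{p(e)^{*}} & \operatorname{Map}_A(p(a),p(w))
\end{tikzcd}
\]
obtained by precomposition with $e$, respectively with $p(e)$, is a homotopy cartesian square.

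First I would fix an object $w$ of $X$ and consider the composite rectangle
\[
\begin{tikzcd}
\operatorname{Map}_X(z,w)\rar{f^{*}}\dar & \operatorname{Map}_X(y,w)\rar{g^{*}}\dar & \operatorname{Map}_X(x,w)\dar\\
\operatorname{Map}_A(p(z),p(w))\rar{p(f)^{*}} & \operatorname{Map}_A(p(y),p(w))\rar{p(g)^{*}} & \operatorname{Map}_A(p(x),p(w))
\end{tikzcd}
\]
whose two horizontal composites are $h^{*}$ and $p(h)^{*}$, since $h=f\circ g$. Thus the outer rectangle is the square attached to $h$ at $w$, while the left and right inner squares are the ones attached to $f$ and to $g$. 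Because $g$ is $p$-coCartesian, the right inner square is homotopy cartesian for every $w$, and the pasting law for homotopy cartesian squares then shows that, for every $w$, the left inner square is homotopy cartesian if and only if the outer rectangle is. Letting $w$ range over all objects of $X$ and invoking the criterion once more, this says precisely that $f$ is $p$-coCartesian if and only if $h$ is, which is the claim.

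I do not expect a genuine obstacle here; the one point that requires care is a matter of variance and placement. The $p$-coCartesian condition on an edge $e$ controls precomposition $\operatorname{Map}_X(-,w)$ and not postcomposition, so one must form the comparison squares from $e^{*}$ rather than $e_{*}$; with that normalisation the hypothesis on $g$ becomes a statement about the right inner square of the paste, which is exactly the square the pasting law asks to be homotopy cartesian, and the two directions of the equivalence go through together. The only substantive inputs are therefore this mapping space criterion and the homotopy invariance of the mapping spaces $\operatorname{Map}_X$ and $\operatorname{Map}_A$ (any of the usual models will do). If one instead prefers to argue directly from Definition~\ref{cartedge} without mapping spaces, the same cancellation can be run one level down, on the slices $X_{h/}$, $X_{x/}$ and their images in $A$, as in the proof of \cite[Prop.~2.4.1.7]{lurie}; this is where all the bookkeeping — and hence all the genuine work — would be concentrated.
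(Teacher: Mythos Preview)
The paper does not give a proof; it simply cites \cite[Proposition~2.4.1.7]{lurie}. Your argument via the mapping-space criterion for $p$-coCartesian edges together with the pasting law for homotopy pullbacks is a correct and standard way to establish the result, and you also correctly interpret the statement with ``coCartesian'' throughout (the ``Cartesian'' in the displayed lemma is evidently a slip, as confirmed by the way the lemma is invoked later in the paper). Lurie's own proof of 2.4.1.7 proceeds instead by comparing the slice $\infty$-categories and their trivial-fibration behaviour---the alternative you sketch in your final paragraph---so your primary argument takes a slightly different (but entirely equivalent) route: the mapping-space version is cleaner conceptually and makes the two-out-of-three structure transparent, while the slice version stays closer to the combinatorial Definition~\ref{cartedge} and avoids importing the characterisation of \cite[Prop.~2.4.4.3]{lurie}.
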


\begin{proof}
This is \cite[Proposition 2.4.1.7]{lurie}.
\end{proof}

\begin{lemma}\label{localtoglobal}
Let \(X\to A\) be a coCartesian fibration. Then an edge is Cartesian if and only if it is locally Cartesian.
\end{lemma}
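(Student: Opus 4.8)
One implication holds for any inner fibration: a \(p\)-Cartesian edge is locally \(p\)-Cartesian, since ``locally \(p\)-Cartesian'' means \(p'\)-Cartesian for the base change \(p'\colon X\times_A\Delta^1\to\Delta^1\) of \(p\) along \(p(f)\colon\Delta^1\to A\), and \(p\)-Cartesian edges are stable under base change. So the content is the converse: assuming \(p\) is a coCartesian fibration, a locally \(p\)-Cartesian edge \(f\colon x\to y\) is \(p\)-Cartesian. Write \(\bar f=p(f)\colon a\to b\).

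The plan is to reduce this to a statement about a single morphism of a fibre. Since \(p\) is coCartesian, choose a \(p\)-coCartesian edge \(g\colon x\to x'\) lifting \(\bar f\), so that \(x'\) lies in the fibre \(X_b\). Using the coCartesian lifting property of \(g\), fill the horn \(\Lambda^2_0\to X\) spanned by \(g\) and \(f\) over the degenerate triangle \(s_1(\bar f)\) of \(A\) (the one whose \(\{1,2\}\)-edge is the identity of \(b\)); this yields a \(2\)-simplex \(\sigma\) of \(X\) whose edge opposite the initial vertex is a morphism \(q\colon x'\to y\) of the fibre \(X_b\), with \(\sigma\) exhibiting \(f\) as \(q\circ g\). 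By Lemma~\ref{twothreecocartesian} applied to \(\sigma\) (recall \(g\) is \(p\)-coCartesian), \(f\) is \(p\)-Cartesian if and only if \(q\) is; and since \(p(q)\) is an identity, Lemma~\ref{cocartovereq} applied to \(p^{\mathrm{op}}\) shows that \(q\) is \(p\)-Cartesian as soon as \(q\) is an equivalence. Hence it suffices to prove that \(q\) is an equivalence of \(X_b\).

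To this end, base change everything along \(\bar f\colon\Delta^1\to A\): let \(p'\colon X\times_A\Delta^1\to\Delta^1\) be the resulting coCartesian fibration. Because \(s_1(\bar f)\) factors through \(\bar f\) via the degeneracy \(\Delta^2\to\Delta^1\) collapsing the edge \(\{1,2\}\), the simplex \(\sigma\) lifts to a \(2\)-simplex \(\sigma'\) of \(X\times_A\Delta^1\): its \(\{0,1\}\)-edge is the base change of \(g\), hence \(p'\)-coCartesian; its \(\{0,2\}\)-edge is the base change of \(f\), which is \(p'\)-Cartesian exactly because \(f\) is locally \(p\)-Cartesian; and its \(\{1,2\}\)-edge is \(q\), now sitting over the identity of \(1\in\Delta^1\). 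Lemma~\ref{twothreecocartesian} applied to \(\sigma'\) then gives that \(q\) is \(p'\)-Cartesian, and since \(p'(q)\) is an identity, Lemma~\ref{cocartovereq} applied to \((p')^{\mathrm{op}}\) shows \(q\) is an equivalence of \(X_b\), which is what we needed.

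I expect the only delicate point to be the bookkeeping in the last step: one must correctly match the three edges of \(\sigma'\) with their roles, using the elementary facts that a base change of a coCartesian fibration is again one and that \(p\)-coCartesian edges are stable under base change, together with the definition of ``locally \(p\)-Cartesian'' as Cartesianness after pulling back along \(p(f)\); everything else is formal, via the two preceding lemmas applied to \(p\), to \(p'\), and to their opposites. One could also bypass \(\sigma\) and argue with mapping spaces: for fixed \(z\) and \(\alpha\colon p(z)\to a\), pushing \(z\) forward along a \(p\)-coCartesian lift of \(\alpha\) reduces the fibrewise equivalence defining \(p\)-Cartesianness of \(f\) to the case of an object of \(X_a\) with \(\alpha=\mathrm{id}_a\), which is precisely the hypothesis that \(f\) is locally \(p\)-Cartesian.
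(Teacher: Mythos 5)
The easy direction and the construction of the $2$-simplex \(\sigma\) are fine, but the heart of your argument rests on Lemma~\ref{twothreecocartesian} read literally in the mixed coCartesian/Cartesian form in which it is printed, and that statement is false (it is a misstatement: its source, \cite[Proposition~2.4.1.7]{lurie}, says that if the \emph{second} edge of a triangle is Cartesian then the first edge is Cartesian iff the composite is; dually, if the first edge is coCartesian then the second is coCartesian iff the composite is coCartesian). Neither correct form covers your two uses, and in fact both of your key steps fail. For the reduction ``\(f\) is \(p\)-Cartesian iff \(q\) is'': the direction you need, ``\(q\) Cartesian \(\Rightarrow\) \(f\) Cartesian'', would with \(q=\mathrm{id}\) say that every coCartesian edge is Cartesian, which is false. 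Worse, the intermediate statement you actually argue for --- that the fibre component \(q\) in the factorization \(f\simeq q\circ g\) is an equivalence --- is not true for a locally Cartesian \(f\). Take \(A=\Delta^1\) and let \(p\colon X\to\Delta^1\) be the coCartesian fibration classified by the functor \(\Delta^0\to\Delta^1\) hitting the initial vertex, so \(X_0=\Delta^0\) and \(X_1=\Delta^1\). This functor has a right adjoint, so \(p\) is also a Cartesian fibration, and the Cartesian (hence locally Cartesian) edge \(f\) from the object of \(X_0\) to the terminal object of \(X_1\) factors as a coCartesian edge \(g\) followed by the unique non-invertible morphism \(q\) of the fibre \(X_1\). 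In your base-changed triangle \(\sigma'\) all your hypotheses hold (first edge coCartesian, composite \(p'\)-Cartesian by local Cartesianness), yet \(q\) is not an equivalence; so the conclusion you draw from Lemma~\ref{twothreecocartesian} there is wrong, and the same example also refutes the ``only if'' half of your reduction.

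Your closing remark is, however, essentially the correct (and standard) argument, and it is the one that should be written out: \(f\) is \(p\)-Cartesian iff for every test object \(w\) the relevant square of mapping spaces is a homotopy pullback, and pushing \(w\) forward along a \(p\)-coCartesian lift of a given \(\alpha\colon p(w)\to a\) identifies the homotopy fibres in question with those for an object of the fibre \(X_a\), where the required equivalence is exactly the local Cartesianness of \(f\); this uses the mapping-space characterizations of (co)Cartesian edges rather than Lemma~\ref{twothreecocartesian}, which cannot be used to repair the simplex argument. Note finally that the paper gives no proof of this lemma at all --- it simply cites \cite[Corollary~5.2.2.4]{lurie} --- so the relevant comparison is with Lurie's argument, which is of the mapping-space type you sketch, not of the form of your main argument.
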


\begin{proof}
See \cite[Corollary 5.2.2.4]{lurie}.
\end{proof}



\begin{sub}
We next recall the coCartesian model structure. This has been first constructued in \cite{lurie}. We follow the slightly more general treatment in \cite{contrakim}. We denote by $\mathbf{sSet}^+$ the category of marked simplicial sets. Its objects are given by pairs $(A,E_A)$ where $A$ is a simplicial set and $E_A$ is a subset of its 1-simplices containing all the degenerate 1-simplices. We will in general denote a marked simplicial set by \(A^+\). The forgetful functor
\[
U\colon \mathbf{sSet}^+\to \mathbf{sSet}
\]
has both a left adjoint, denoted by $(A)^\flat$, and a right adjoint, denoted by $(A)^\sharp$. The simplicial set $(A)^\flat$ has precisely the non-degenerate edges marked, while $(A)^\sharp$ has all 1-simplices marked.
\end{sub}

\begin{definition}\label{markedgenerators}
We define the class of \emph{marked  left anodyne extensions} to be the smallest saturated class containing the morphisms
\begin{itemize}
	\item[(A1)] $(\Lambda^n_k)^\flat \to (\Delta^n)^\flat$ for $n \geq 2$ and $0<k<n$,
	\item[(A2)] $J^\flat \to J^\sharp$, 
	\item[(B1)] $\mcyl \times (\Delta^1)^\flat \cup \{0\}\times \mcyl \to \mcyl \times \mcyl$,
	\item[(B2)] $\mcyl \times (\partial \Delta^n)^\flat\cup \{0\} \times (\Delta^n)^\flat \to \mcyl \times (\Delta^n)^\flat$.
\end{itemize}
Here, the simplicial set $J$ is the nerve of the free-walking isomorphism.
\end{definition}

\begin{remark}
The class we have described here differs slightly from Lurie's definition of \emph{marked anodyne morphisms}. Nevertheless, they define the same saturated class as his marked anodyne morphisms.
\end{remark}

\begin{definition}
A \emph{marked left fibration} is a map of marked simplicial sets having the right lifting property with respect to the class of marked left anodyne extensions.
\end{definition}

\begin{remark}
Note that a map $(X,E_X)\to A^\sharp$ is a marked left fibration if and only if the underlying map of simplicial sets is a coCartesian fibration and the set $E_X$ is precisely the set of coCartesian edges.
\end{remark}

\begin{remark}\label{dualgenerators}
There is the 'dual' notion of marked \emph{right} anodyne extensions in which the sets (B1) and (B2) in Definition \ref{markedgenerators} are replaced by the sets
\begin{itemize}
	\item[(B1')] $\mcyl \times (\Delta^1)^\flat \cup \{1\}\times \mcyl \to \mcyl \times \mcyl$,
	\item[(B2')] $\mcyl \times (\partial \Delta^n)^\flat\cup \{1\} \times (\Delta^n)^\flat \to \mcyl \times (\Delta^n)^\flat$.
\end{itemize}
Accordingly we have the notion of marked \emph{right} fibration.
\end{remark}

\begin{theorem}\label{cocartmodelstructure}
Let $A^+$ be a marked simplicial set. Then there exists a left proper combinatorial model structure on $\mathbf{sSet}^+/A^+$ with the following description.
\begin{itemize}
	\item The cofibrations are the morphisms of marked simplicial sets, whose underlying map of simplicial sets is a monomorphism.
	\item The fibrant objects are the marked left fibrations with target $A^+$.
	\item The fibrations between fibrant objects are precisely the marked left fibrations.
\end{itemize}
\end{theorem}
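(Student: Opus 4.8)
The statement in the case $A^+ = A^\sharp$ is due to Lurie~\cite[\S3.1]{lurie}, and in the generality stated to the second named author~\cite{contrakim}; what follows is a sketch of how it is obtained by the general recipe for cofibrantly generated model structures on locally presentable categories, in the spirit of~\cite[Chapter~5]{cisinskibook}.

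The formal ingredients are routine. The category $\mathbf{sSet}^+$ is locally presentable, hence so is every slice $\mathbf{sSet}^+/A^+$; the cofibrations form the left class of a cofibrantly generated weak factorization system, generated by the set $I$ of maps $(\partial\Delta^n)^\flat \to (\Delta^n)^\flat$ for $n \geq 0$ together with $(\Delta^1)^\flat \to (\Delta^1)^\sharp$, pulled into the slice in the usual way; in particular every object is cofibrant. One then defines the class $W$ of weak equivalences on $\mathbf{sSet}^+/A^+$ to consist of the maps $f\colon X^+ \to Y^+$ such that, for every marked left fibration $Z \to A^+$, precomposition with $f$ induces a weak homotopy equivalence between the simplicial sets of maps over $A^+$ into $Z$. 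The existence of the model structure, with these cofibrations and these weak equivalences, is then an instance of Jeff Smith's recognition theorem: one checks that $W$ enjoys the two-out-of-three property and is closed under retracts, that maps with the right lifting property against all cofibrations belong to $W$, that $W$ is an accessible and accessibly embedded full subcategory of the arrow category, and that $W \cap \mathrm{cof}(I)$ is closed under pushouts and transfinite composition. Left properness is then automatic because all objects are cofibrant, and combinatoriality is built into the construction.

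The real content, and the step I expect to be the main obstacle, is the identification of the class of trivial cofibrations with the saturation of the marked left anodyne extensions of Definition~\ref{markedgenerators}. For the easy inclusion one shows that every marked left anodyne map lies in $W$; by adjunction this reduces to the statement that the pushout-product of a marked left anodyne map with a cofibration is again marked left anodyne, which is precisely the reason the generators (B1) and (B2) are present: one filters the product of a simplex with the marked interval $(\Delta^1)^\sharp$ and reduces to the listed families. For the converse one must show that a marked left fibration $p\colon X^+ \to Y^+$ which is a weak equivalence has the right lifting property against every monomorphism; granting this, the small object argument factors a trivial cofibration $f$ as a marked left anodyne map $j$ followed by a marked left fibration $p$, two-out-of-three makes $p$ a weak equivalence and hence (by the claim) a trivial fibration, and the usual retract trick exhibits $f$ as a retract of $j$, whence the claim. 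Proving that a marked left fibration which is a weak equivalence is a trivial fibration is where one genuinely uses the geometry of coCartesian fibrations: one reduces to $A^+ = A^\sharp$, uses the identification of marked left fibrations over $A^\sharp$ with coCartesian fibrations carrying the coCartesian-edge marking (the Remark above), compares the two ends of the cylinder $(\Delta^1)^\sharp \times (-)$ by coCartesian transport to see that such a weak equivalence is a fiberwise equivalence, and finally checks that a fiberwise equivalence of marked left fibrations over $A^\sharp$ lifts against all monomorphisms.

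Once the trivial cofibrations are thus identified, the two remaining assertions of the theorem follow formally. An object $X^+ \to A^+$ is fibrant precisely when it has the right lifting property against the generating trivial cofibrations, that is, against the marked left anodyne maps, i.e. exactly when it is a marked left fibration with target $A^+$. And if $p\colon X^+ \to Y^+$ is a marked left fibration between fibrant objects, then factoring $p$ as a trivial cofibration $j$ followed by a fibration $q$, observing that the intermediate object is again fibrant, and using that $p$ — having the right lifting property against all marked left anodyne maps, hence against all trivial cofibrations — admits a retraction of $j$ over $Y^+$, exhibits $p$ as a retract of $q$, hence as a fibration.
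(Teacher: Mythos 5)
The paper does not prove this statement at all: its ``proof'' is the single citation \cite[Theorem 4.29]{contrakim}, so the only thing to match is the attribution, which you get right (Lurie for $A^\sharp$, the second named author in general). Your sketch is therefore a genuinely different contribution from what the paper offers, and its overall architecture is the standard one: generating cofibrations $(\partial\Delta^n)^\flat\to(\Delta^n)^\flat$ and $(\Delta^1)^\flat\to(\Delta^1)^\sharp$, weak equivalences detected by mapping spaces into marked left fibrations over $A^+$, Smith's recognition theorem, and left properness from cofibrancy of all objects. This is essentially Lurie's route in \cite[\S 3.1]{lurie} run through modern machinery, whereas the cited reference proceeds in the style of Cisinski's theory of anodyne extensions and exact cylinders (which is why the generators of Definition \ref{markedgenerators} are packaged the way they are); both routes are legitimate, and yours makes the formal part of the argument more transparent at the cost of pushing all the weight onto the identification of the trivial cofibrations.

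Two points in that identification deserve to be flagged. First, the pivotal step --- that a marked left fibration which is a weak equivalence has the right lifting property against all monomorphisms --- is exactly where the entire content of the theorem lives (in Lurie's treatment this is the cluster of results around \cite[Proposition 3.1.3.5]{lurie}, and in \cite{contrakim} it occupies most of Section 4); your sketch only gestures at it via ``coCartesian transport'' and a fiberwise criterion, so as written the proposal is an outline rather than a proof. Second, your proposed reduction to the case $A^+=A^\sharp$ is not available for the statement as given: the theorem is over an arbitrary marked simplicial set $A^+$, and over a general marked base the fibrant objects are \emph{not} identified with coCartesian fibrations carrying the coCartesian marking (that identification, recalled in the paper's Remark, is special to $A^\sharp$), so the fiberwise-equivalence argument you invoke does not directly apply; one either argues over arbitrary $A^+$ from the start, as in \cite{contrakim}, or proves the theorem only over $A^\sharp$, which would be a weaker statement than the one quoted. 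If you repair these two points --- prove the pushout-product stability of marked left anodyne maps and the ``marked left fibration $+$ weak equivalence $\Rightarrow$ trivial fibration'' lemma over a general marked base --- the remaining formal steps (fibrancy of marked left fibrations over $A^+$, and the retract argument for fibrations between fibrant objects) are exactly as you describe.
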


\begin{proof}
This is \cite[Theorem 4.29]{contrakim}.
\end{proof}

\begin{sub}
Let \(A\) be a simplicial set. We will denote \(\coCart(A)\) the homotopy category of the coCartesian model structure on \(\mSet/A^\sharp\). For the next Lemma recall that a \emph{cellular marked right anodyne extension} is a map in the saturated class generated by the classes (B1') and (B2') of Remark \ref{dualgenerators}.
\end{sub}

\begin{lemma}\label{smooth}
Let \(K^+\to L^+\) be a cellular marked right anodyne extension and \(X^+\to L^+\) be a marked left fibration. Then the pullback
\[
K^+\times_{L^+} X^+\to X^+
\]
is marked right anodyne.
\end{lemma}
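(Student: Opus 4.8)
The plan is to reduce the assertion, by a saturation argument, to the generating maps \textup{(B1$'$)} and \textup{(B2$'$)} of Remark~\ref{dualgenerators}, and then to verify it directly for those.

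Let \(\mathbf A\) denote the class of morphisms \(u\colon K^+\to L^+\) of marked simplicial sets such that, for every marked left fibration \(p\colon X^+\to L^+\), the base change \(p^{*}(u)\colon K^+\times_{L^+}X^+\to X^+\) is marked right anodyne. Since the cellular marked right anodyne extensions form, by definition, the saturated class generated by \textup{(B1$'$)} and \textup{(B2$'$)}, it is enough to show that \(\mathbf A\) is saturated and contains those two sets of generators. For saturation we use three facts: (a) \(\mathbf{sSet}^+\) is a presheaf category, hence locally cartesian closed, so every base-change functor \((p)^{*}\) admits a right adjoint and in particular preserves colimits; (b) marked left fibrations are stable under base change; (c) the restriction of a marked left fibration along the inclusion of a summand of its codomain is again a marked left fibration, the domains of the marked left anodyne generators of Definition~\ref{markedgenerators} being connected. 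Granting (a)--(c), closure of \(\mathbf A\) under pushout goes as follows: given a pushout \(L'^+=L^+\sqcup_{K^+}K'^+\) of \(u\) along some \(K^+\to K'^+\) and a marked left fibration \(p'\colon X'^+\to L'^+\), the pullback \(X^+:=L^+\times_{L'^+}X'^+\) is a marked left fibration over \(L^+\) by (b), and applying the colimit-preserving functor \((p')^{*}\) to that pushout square exhibits the base change of \(u'\colon K'^+\to L'^+\) along \(p'\) as a pushout of the base change of \(u\) along \(X^+\to L^+\); the latter is marked right anodyne because \(u\in\mathbf A\), hence so is the former, and therefore \(u'\in\mathbf A\). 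Closure under transfinite composition and coproducts is obtained in the same way — using (c) for coproducts, since a marked left fibration over a disjoint union decomposes accordingly — and closure under retracts is immediate. Thus \(\mathbf A\) is saturated.

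There remains the verification that \textup{(B1$'$)} and \textup{(B2$'$)} lie in \(\mathbf A\). In both cases the codomain is of the form \(\mcyl\times B^+\) with \(B^+\) equal to \(\mcyl\) or \((\Delta^n)^\flat\); the structural input is that the non-degenerate edge of \(\mcyl\) is marked, hence, by the remark following the definition of marked left fibrations, \(p\)-coCartesian for any marked left fibration \(p\) with such a codomain, so that coCartesian transport in the \(\mcyl\)-direction is available. The crucial case is \textup{(B2$'$)} with \(n=0\), which unwinds to: for a marked left fibration \(p\colon X^+\to\mcyl\), the inclusion \(X^+_1\hookrightarrow X^+\) of the fibre over \(\{1\}\) is marked right anodyne; this follows from the description of \(X^+\) as the marked mapping cylinder of the coCartesian transport \(X^+_0\to X^+_1\) along the marked edge, from which \(X^+_1\hookrightarrow X^+\) is seen directly to be marked right anodyne. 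The remaining generators are then handled by a further dévissage: a second saturation argument, now in the simplicial variable, reduces the boundary inclusion \((\partial\Delta^n)^\flat\to(\Delta^n)^\flat\) occurring in \textup{(B2$'$)} — and the marking map \((\Delta^1)^\flat\to\mcyl\) occurring in \textup{(B1$'$)} — to the case \(n=0\) together with the generating cofibrations of marked simplicial sets, using once more that base change and the relevant pushout-products preserve the colimits involved. I expect this concrete analysis of the generators, and especially the bookkeeping for \textup{(B2$'$)} at general \(n\), to be the only real obstacle, everything preceding it being formal; alternatively, these verifications may be extracted from the companion paper~\cite{cocartesiankim}.
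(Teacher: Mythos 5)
Your formal reduction is fine as far as it goes: the class \(\mathbf A\) you define is indeed saturated, by exactly the argument you give (base change along a marked left fibration preserves colimits because \(\mathbf{sSet}^+\) is locally cartesian closed, and marked left fibrations are stable under pullback); note only that \(\mathbf{sSet}^+\) is not literally a presheaf category, though local cartesian closedness is all you use, and that the connectivity remark for coproducts is unnecessary since restriction to a summand is itself a pullback. But this is the easy half of the lemma. The substance is the verification for the generators \textup{(B1$'$)} and \textup{(B2$'$)}, and there your argument has a genuine gap. For the case you call crucial, you assert that a marked left fibration \(X^+\to(\Delta^1)^\sharp\) \emph{is} the marked mapping cylinder of its transport \(X^+_0\to X^+_1\), from which the claim is "seen directly". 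A coCartesian fibration over \(\Delta^1\) is not isomorphic to such a cylinder, only equivalent to one, and being marked right anodyne is not a property one can transport along an equivalence of total spaces; to make this case work you would have to argue honestly, e.g.\ construct by a lifting argument against \(p\) a marked homotopy \(X^\natural\times(\Delta^1)^\sharp\to X^\natural\) exhibiting \(X^\natural_1\) as a marked right deformation retract of \(X^\natural\), and then run the retract argument against the pushout-product of \(\{1\}\subset(\Delta^1)^\sharp\) with \(X^\natural_1\subset X^\natural\).

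More seriously, the proposed "second saturation argument in the simplicial variable" reducing \textup{(B2$'$)} at general \(n\) (and \textup{(B1$'$)}) to the case \(n=0\) does not work as described: membership in \(\mathbf A\) is a condition quantified over \emph{all} marked left fibrations over the codomain, and the maps you would have to induct over, namely the boundary inclusions \((\partial\Delta^n)^\flat\to(\Delta^n)^\flat\) entering the pushout-product \textup{(B2$'$)}, are not themselves in \(\mathbf A\) (they are not right anodyne at all), so there is no saturation of \(\mathbf A\) available in that variable. Nor does the naive transport-towards-\(\{1\}\) deformation retract work for general \(n\), since such a homotopy is not constant over \(\Delta^1\times\partial\Delta^n\). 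This generator verification is precisely the nontrivial content of the lemma; the paper does not reprove it but quotes it as \cite[Theorem 4.45]{contrakim} (your pointer to \cite{cocartesiankim} is the wrong companion reference), where it is established by a substantially more involved argument. As written, your proof defers exactly the step that carries the mathematical weight.
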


\begin{proof}
This is \cite[Theorem 4.45]{contrakim}.
\end{proof}

\begin{theorem}\label{cocartesianinvariance}
Let \(i\colon K\to L\) be a Joyal equivalence. Then the functor
\[
i_! \colon \mSet/K^\sharp \to \mSet/L^\sharp
\]
induces a Quillen equivalence of coCartesian model structures.
\end{theorem}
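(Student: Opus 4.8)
The plan is to reduce the statement to the case of a monomorphism and then to recognise coCartesian invariance as a base-change property of fibrant replacement in the coCartesian model structure.

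First I would reduce. Factoring an arbitrary Joyal equivalence $i$ as a trivial cofibration followed by a fibration of the Joyal model structure and applying two-out-of-three, the second factor is a trivial fibration; as $i\mapsto i_!$ is functorial and a composite of left Quillen equivalences is a left Quillen equivalence, it suffices to treat trivial cofibrations and trivial fibrations of the Joyal model structure separately. A trivial fibration $p$ has a section $s$, which is a monomorphism and, by two-out-of-three, a Joyal equivalence, hence a trivial cofibration; since $p_!s_!=\mathrm{id}$, two-out-of-three for Quillen equivalences reduces the trivial fibration case to the trivial cofibration case. From now on $i\colon K\hookrightarrow L$ is a monomorphism that is a Joyal equivalence.

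Next I would clear away the formal parts. The right adjoint $i^*$ of $i_!$ is pullback along $i^\sharp\colon K^\sharp\to L^\sharp$; being a pullback, it preserves marked left fibrations, hence fibrant objects and --- by Theorem \ref{cocartmodelstructure} --- fibrations between fibrant objects, while $i_!$ visibly preserves cofibrations; so $(i_!,i^*)$ is a Quillen adjunction. Every object of $\mSet/K^\sharp$ is cofibrant, so by Ken Brown's lemma $i_!$ preserves all weak equivalences of the coCartesian model structure and $\mathbb{L}i_!=i_!$; and since $i$ is a monomorphism the unit $\mathrm{id}\to i^*i_!$ is an isomorphism. It follows that $(i_!,i^*)$ is a Quillen equivalence if and only if $\mathbb{R}i^*$ restricts to an equivalence between the homotopy categories of marked left fibrations over $L^\sharp$ and over $K^\sharp$, that is, if and only if: (a) for every marked left fibration $Z^+\to L^\sharp$ the counit $Z^+\times_{L^\sharp}K^\sharp\hookrightarrow Z^+$ is a weak equivalence of the coCartesian model structure over $L^\sharp$ (full faithfulness of $\mathbb{R}i^*$); and (b) every marked left fibration $P^+\to K^\sharp$ is equivalent over $K^\sharp$ to $i^*Z^+$ for some marked left fibration $Z^+\to L^\sharp$ --- for instance for $Z^+$ a fibrant replacement of $i_!P^+$, using the invertibility of the unit (essential surjectivity of $\mathbb{R}i^*$).

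This leaves the geometric content, which is where I expect the real difficulty. On underlying simplicial sets the map in (a) is the pullback of the categorical equivalence $i$ along the coCartesian fibration $Z\to L$; since coCartesian fibrations are categorical fibrations \cite{lurie} and the Joyal model structure is right proper \cite{cisinskibook}, this pullback is again a categorical equivalence. To promote this to an equivalence over $L^\sharp$, and all the more to prove (b), one must control how coCartesian fibrant replacement in $\mSet/L^\sharp$ interacts with the restriction functor $i^*$: the fibres of a coCartesian-fibrant replacement of $i_!P^+$ over the vertices in the image of $K$ should be the fibres of $P^+$, and its fibres over the remaining vertices of $L$ should be the transports of these along the categorical equivalence $i$. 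This is a Beck-Chevalley type statement about marked left fibrations; I would derive it from the smoothness of marked left fibrations (Lemma \ref{smooth}) together with the exponentiability and base-change results for coCartesian fibrations proved in \cite{cocartesiankim}, and supplying it is the main obstacle. Granting it, in both (a) and (b) the relevant map becomes, after factoring through a fibrant replacement, a fibrewise equivalence of marked left fibrations over the base in question, which one verifies vertexwise from the base-change description of those fibres; so (a), (b), and hence the theorem, follow.
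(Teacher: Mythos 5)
Your formal skeleton is fine: the factorization of a Joyal equivalence into a trivial cofibration followed by a trivial fibration, the section trick, the criterion that $i_!$ preserves cofibrations while $i^*$ preserves fibrations between fibrant objects (so the adjunction is Quillen), the invertibility of the unit for $i$ a monomorphism, and the reformulation of the Quillen equivalence as full faithfulness plus essential surjectivity of $\mathbf{R}i^*$ on homotopy categories of marked left fibrations. But all of this is routine bookkeeping; the theorem lives entirely in the step you explicitly leave open, namely that the derived counit $i_!i^*Z^\natural\to Z^\natural$ is a coCartesian weak equivalence over $L^\sharp$ and that the derived unit $P^\natural\to i^*R(i_!P^\natural)$ is one over $K^\sharp$. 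The right-properness observation buys essentially nothing here: a Joyal equivalence of total spaces between objects of $\mSet/L^\sharp$ is strictly weaker than a coCartesian equivalence over $L^\sharp$ (for instance $\{1\}^\sharp\to(\Delta^1)^\sharp$ over $(\Delta^1)^\sharp$ is an underlying Joyal equivalence between marked left fibrations but not a fiberwise equivalence), so the map in (a) is not ``promoted'' without the missing input.

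Moreover, the ``Beck--Chevalley'' statement you propose to grant --- that a fibrant replacement of $i_!P^+$ over $L^\sharp$ has the fibers of $P^+$ over the image of $K$ and their transports elsewhere --- is not an auxiliary lemma but essentially a restatement of the derived-unit condition, i.e.\ of the theorem itself; and Lemma \ref{smooth} does not deliver it, since it compares fibers only along \emph{cellular marked right anodyne} inclusions, which an arbitrary Joyal trivial cofibration $K\to L$ (or the inclusion of a vertex of $L$ not hit by $K$) has no reason to be. Saying you would derive it from \cite{cocartesiankim} concedes the point: the paper's own proof of this statement is precisely the citation \cite[Corollary 3.4]{cocartesiankim}, so what you have reconstructed are the standard formal reductions, while the geometric core that the citation supplies remains unproved in your proposal.
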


\begin{proof}
This is \cite[Corollary 3.4]{cocartesiankim}.
\end{proof}


\section{Extension properties}

\begin{sub}
This section is the technical heart of the article. We prove various extension properties for coCartesian fibrations. Throughout this section we fix a regular cardinal \(\kappa\) and we call a map of simplicial sets \(X\to A\) \(\kappa\)-small if the pullback along any map \(\Delta^n\to A\) is \(\kappa\)-small. We will call a map of marked simplicial sets \(\kappa\)-small if its underlying map of simplicial sets is \(\kappa\)-small.
\end{sub}

\begin{sub}
The first extension property concerns extending an equivalence of coCartesian fibrations along a monomorphism. The proof is adapted almost word for word from \cite{shulmanelegant} and \cite{klvsimplicial}. This extension property will imply univalence of the universal coCartesian fibration. An easy consequence of the extension property is that we can extend coCartesian fibrations along trivial cofibrations of the Joyal model structure which will imply fibrancy of the universe of \(\kappa\)-small coCartesian fibrations.
\end{sub}

\begin{lemma}\label{sizelemma}
Let \(i\colon A^+\to B^+\) be a monomorphism of marked simplicial sets. Then
\begin{enumerate}
	\item \(i_*\colon \mSet/A^+\to \mSet/B^+\) preserves trivial fibrations,
	\item the counit \(i^*i_*\Rightarrow id\) is an isomorphism, 
	\item if \(p\colon X^+\to A^+\) is \(k\)-small, so is \(i_*p\).
\end{enumerate}
\end{lemma}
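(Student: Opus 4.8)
The plan is to recognize the three statements as the marked--simplicial--set versions of standard facts about pushforward along a monomorphism in a presheaf category, exactly as in the treatment of Kan fibrations in \cite{klvsimplicial, shulmanelegant}; the only extra work is to keep track of markings. Throughout one uses the chain of adjunctions $i_{!}\dashv i^{*}\dashv i_{*}$ between $\mSet/A^{+}$ and $\mSet/B^{+}$, where $i^{*}$ is pullback along $i$, together with two structural inputs: pullbacks in $\mSet$ preserve monomorphisms (monomorphisms are detected on underlying simplicial sets, and pullback preserves monomorphisms in any category), and the forgetful functor $U\colon\mSet\to\mathbf{sSet}$, being both a left and a right adjoint, preserves limits and colimits, so that the underlying simplicial set of a pullback is the pullback of the underlying simplicial sets. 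Unwinding $i^{*}\dashv i_{*}$, and using that $i$ is a monomorphism, so that $i^{*}$ carries flatly marked objects to flatly marked objects, an $m$-simplex of $i_{*}X^{+}$ lying over $\sigma\colon\Delta^{m}\to B$ is the same datum as a map $\Delta^{m}\times_{B}A\to X^{+}$ over $A$, the fibre product being formed along $\sigma$.

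For part (1): a morphism of marked simplicial sets is a trivial fibration precisely when it has the right lifting property against every monomorphism of marked simplicial sets. Given a trivial fibration $f$ in $\mSet/A^{+}$ and a monomorphism $j$ in $\mSet/B^{+}$, any lifting problem of $i_{*}f$ against $j$ transposes, across $i^{*}\dashv i_{*}$, to a lifting problem of $f$ against $i^{*}j$; since $i^{*}j$ is again a monomorphism, the latter has a solution, which transposes back. Hence $i_{*}f$ is a trivial fibration.

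For part (2): since $i$ is a monomorphism, for any $\tau\colon\Delta^{m}\to A$ the projection $\Delta^{m}\times_{B}A\to\Delta^{m}$ formed along $i\tau$ is an isomorphism onto $\Delta^{m}$ --- the pullback of the subobject $A\hookrightarrow B$ along a map that factors through it is the whole domain. Transposing along $i^{*}\dashv i_{*}$ therefore identifies the $m$-simplices of $i^{*}i_{*}X^{+}$ lying over $\tau$ with the $m$-simplices of $X^{+}$ lying over $\tau$; this identification is natural in $[m]$ and in $X^{+}$ and realizes the counit $i^{*}i_{*}\Rightarrow\mathrm{id}$ on underlying simplicial sets. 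It remains to check that it matches marked edges. An edge of $i^{*}i_{*}X^{+}$ over $\tau\in A_{1}$ is marked if and only if $\tau$ is marked in $A^{+}$ and the corresponding edge of $i_{*}X^{+}$ is marked; chasing the universal property of $i_{*}$, when $\tau$ is marked the latter condition is equivalent to the corresponding edge of $X^{+}$ being marked, because pulling the inclusion $(\Delta^{1})^{\flat}\hookrightarrow(\Delta^{1})^{\sharp}$ back along the edge $i\tau$ of $B^{+}$ returns $(\Delta^{1})^{\flat}\hookrightarrow(\Delta^{1})^{\sharp}$ over $A^{+}$. Hence the counit is an isomorphism.

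For part (3): by the convention that $\kappa$-smallness of a map of marked simplicial sets is tested on underlying simplicial sets, it suffices to bound, for each $\sigma\colon\Delta^{n}\to B$, the size of the underlying simplicial set of $i_{*}X^{+}\times_{B^{+}}(\Delta^{n})^{\flat}$. By the first paragraph, an $m$-simplex of the latter is a pair consisting of a map $\rho\colon\Delta^{m}\to\Delta^{n}$ and a map $s\colon\Delta^{m}\times_{B}A\to X^{+}$ over $A$, the fibre product being taken along $\sigma\rho$. The crucial point is that $\Delta^{m}\times_{B}A\to\Delta^{m}$, being the pullback of the monomorphism $i$, is the inclusion of a sub-simplicial set of $\Delta^{m}$, and hence has only finitely many nondegenerate simplices; the map $s$ is determined by their images, each of which ranges over a $\kappa$-small set because $p$ is $\kappa$-small, so there are fewer than $\kappa$ choices of $s$. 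As there are finitely many such $\rho$ and $\kappa$ is regular, $i_{*}X^{+}\times_{B^{+}}(\Delta^{n})^{\flat}$ has fewer than $\kappa$ simplices in each degree, hence is $\kappa$-small, so $i_{*}p$ is $\kappa$-small. The single genuinely geometric ingredient, shared by (2) and (3), is that pulling a monomorphism back along a map out of a simplex yields a sub-simplicial set of that simplex; everything else is adjunction bookkeeping, and the step demanding the most care in a full write-up is the comparison of markings in (2), which must be read off the universal property of $i_{*}$ rather than from the underlying simplicial sets.
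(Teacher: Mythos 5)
Your proof is correct and takes essentially the same route as the paper, whose proof consists of the remark that the argument of \cite[Lemma 2.2.4]{klvsimplicial} carries over verbatim to marked simplicial sets: your write-up is exactly that argument (adjunction transposition against monomorphisms for (1), the identification \(\Delta^m\times_B A\cong\Delta^m\) when the map factors through the subobject \(A\subseteq B\) for (2), and counting via the finitely many nondegenerate simplices of the sub-simplicial set \(\Delta^m\times_B A\subseteq\Delta^m\) for (3)), with the marking bookkeeping made explicit. As the paper also notes, your argument for (1) indeed never uses that \(i\) is a monomorphism.
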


\begin{proof}
Exactly the same proof as in \cite[Lemma 2.2.4]{klvsimplicial}. Note that the first item holds without the assumption that \(i\) is a monomorphism.
\end{proof}

\begin{theorem}\label{extension}
Suppose we have a monomorphism of simplicial sets \(i\colon K\to L\) and a pullback square
\[
\begin{tikzcd}
Y_0^\natural\rar \dar & Y_1^\natural\dar\\
K^\sharp\rar{i} & L^\sharp
\end{tikzcd}
\]
in which the vertical arrows are coCartesian fibrations. Suppose furthermore we have an equivalence of coCartesian fibrations
\[
\begin{tikzcd}
X_0^\natural\ar{rr}{w_0}\drar & & Y_0^\natural\dlar\\
& K^\sharp &
\end{tikzcd}
\]
Then we can complete the dotted arrows in the diagram
\[
\begin{tikzcd}[column sep=small]
X_0^\natural\drar{w_0}\ar[dashed]{rr}\ar[bend right]{ddr} & & X_1^\natural\drar[dashed]{w_1}\ar[dashed, bend right]{ddr} & \\
 & Y_0^\natural\dar \ar{rr} & & Y_1^\natural\dar\\
 & K^\sharp \ar{rr}{i} & & L^\sharp
\end{tikzcd}
\]
such that the square
\[
\begin{tikzcd}
X_0^\natural\rar \dar & X_1^\natural\dar\\
K^\sharp\rar{i} & L^\sharp
\end{tikzcd}
\]
is a pullback, the map \(X_1^\natural\to L^\sharp\) is a coCartesian fibration and \(w_1\) is an equivalence of coCartesian fibrations. Finally, if all coCartesian fibrations are \(\kappa\)-small, then so is \(X^\natural_1\to L^\sharp\).
\end{theorem}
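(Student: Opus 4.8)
The plan is to reduce Theorem~\ref{extension} to a pushout/pullback gluing argument combined with the model-categorical machinery of Theorem~\ref{cocartmodelstructure} and the exponentiability results recalled from \cite{cocartesiankim}. The key idea, following \cite{klvsimplicial} and \cite{shulmanelegant}, is that the data of $(X_0^\natural \to K^\sharp$, the equivalence $w_0$, and the pullback square over $i)$ can be assembled into a single lifting problem against a trivial cofibration in a suitable coCartesian model structure; solving it produces $X_1^\natural$, and the various compatibility conditions (the pullback square, coCartesian-ness over $L^\sharp$, and $w_1$ being an equivalence) fall out of how the lifting problem is set up.

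First I would take the pushout $P^\sharp = K^\sharp \sqcup_{K^\sharp} L^\sharp$... more precisely, I would work in $\mathbf{sSet}^+/L^\sharp$ and consider the object obtained by gluing. The map $Y_1^\natural \to L^\sharp$ restricts over $K$ to $Y_0^\natural$, which receives the equivalence $w_0$ from $X_0^\natural$. Since $w_0$ is an equivalence of coCartesian fibrations over $K^\sharp$, hence a weak equivalence in the coCartesian model structure on $\mathbf{sSet}^+/K^\sharp$, I would factor it, or rather invoke that $X_0^\natural \to Y_0^\natural$ being a weak equivalence between fibrant objects means it is (up to replacing by a fibration) part of the data needed to run the extension. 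The cleanest route: form the object $Z^\natural = X_0^\natural \sqcup_{Y_0^\natural} Y_1^\natural$ over $L^\sharp$ (pushing out along $w_0$ and along the pullback inclusion $Y_0^\natural \hookrightarrow Y_1^\natural$). The canonical map $Z^\natural \to L^\sharp$ need not be a coCartesian fibration, so I would take a fibrant replacement $Z^\natural \to X_1^\natural \to L^\sharp$ in the coCartesian model structure of Theorem~\ref{cocartmodelstructure} over $L^\sharp$, choosing the first map to be a trivial cofibration and the second a marked left fibration. Define $w_1$ as the composite $Y_1^\natural \to Z^\natural \to X_1^\natural$. Restricting everything over $K$: I must check $X_0^\natural \to X_1^\natural\times_{L^\sharp}K^\sharp$ is an isomorphism, which requires that the pullback of the trivial cofibration $Z^\natural\to X_1^\natural$ along $i$ is again one with the right source — this is exactly the content of Lemma~\ref{sizelemma}(2) once I interpret the construction via a right adjoint $i_*$, and of the fact (the ``smoothness'' input, Lemma~\ref{smooth}, and the Beck--Chevalley properties from \cite{cocartesiankim}) that pullback along the monomorphism $i$ interacts well with the coCartesian model structures.

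The $\kappa$-smallness claim is then handled by Lemma~\ref{sizelemma}(3): if the construction of $X_1^\natural$ is organized so that it is built from $\kappa$-small inputs via $i_*$ and pushouts/fibrant replacements that preserve $\kappa$-smallness (fibrant replacement can be performed by a $\kappa$-small-cell small object argument since the generating trivial cofibrations of Theorem~\ref{cocartmodelstructure} have $\kappa$-small, in fact finite, source and target, for $\kappa$ uncountable), then $X_1^\natural\to L^\sharp$ is $\kappa$-small. I expect the main obstacle to be verifying that the pullback of the fibrant replacement $X_1^\natural$ along $i\colon K\to L$ recovers $X_0^\natural$ \emph{on the nose} rather than merely up to equivalence: the naive fibrant replacement will only give this up to weak equivalence, so the actual construction must be more careful — one instead does an inductive/transfinite attachment of cells of the generating trivial cofibrations that lie over $L$ but not over $K$, i.e. one uses the relative version of the small object argument that leaves the part over $K$ untouched. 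Making this precise, and checking that $w_1$ restricts to $w_0$ over $K$ and is a weak equivalence (using two-out-of-three against $w_0$ and the trivial cofibrations $X_0^\natural\to X_1^\natural\times_{L^\sharp}K^\sharp$, together with $Y_0^\natural\to Y_1^\natural$ being the pullback of a cofibration), is where the bulk of the work lies; this is precisely the point where the argument is ``adapted almost word for word'' from \cite{klvsimplicial,shulmanelegant}, transported from the Kan/left-fibration setting to marked simplicial sets and coCartesian fibrations.
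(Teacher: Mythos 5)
Your proposal does not contain the mechanism that actually makes the theorem work, and the construction you sketch has concrete defects. First, the gluing object is ill-defined: the pushout \(X_0^\natural\sqcup_{Y_0^\natural}Y_1^\natural\) would require a map \(Y_0^\natural\to X_0^\natural\), but you are only given \(w_0\colon X_0^\natural\to Y_0^\natural\); any repair (mapping cylinder of \(w_0\) glued to \(Y_1^\natural\), a fibrant replacement of a zig-zag, etc.) already changes the part lying over \(K\), so the required \emph{strict} pullback \(X_1^\natural\times_{L^\sharp}K^\sharp\cong X_0^\natural\) is lost at the very first step, and attaching cells afterwards can only enlarge, never shrink, the part over \(K\). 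Moreover your \(w_1\) comes out as a map \(Y_1^\natural\to X_1^\natural\), the wrong direction to restrict to \(w_0\). Second, and more fundamentally, the step you defer to a ``relative small object argument that leaves the part over \(K\) untouched'' is not an available tool: generating marked left anodyne extensions whose attaching maps meet the locus over \(K\) force modifications there, and attaching cells only over the complement of \(K\) does not in general yield a marked left fibration over \(L^\sharp\). This is precisely the difficulty the theorem is about, and Lemma \ref{sizelemma}(2) cannot be invoked after the fact to certify a construction that was not made through \(i_*\); neither Lemma \ref{smooth} nor Beck--Chevalley properties enter here.

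The paper's proof (the KLV/Shulman argument you allude to) resolves this by \emph{defining} \(X_1^+\) through the right adjoint: \(X_1^+:=Y_1^\natural\times_{i_*Y_0^\natural}i_*X_0^\natural\), where the map \(Y_1^\natural\to i_*Y_0^\natural\) is the unit \(Y_1^\natural\to i_*i^*Y_1^\natural\). Then \(i^*i_*\cong\mathrm{id}\) gives the strict pullback square over \(K^\sharp\) and the restriction of \(w_1\) to \(w_0\) for free, and \(\kappa\)-smallness follows from Lemma \ref{sizelemma}(3). The remaining content is that \(X_1^+\to L^\sharp\) is a coCartesian fibration and \(w_1\) a coCartesian equivalence; this is proved by factoring \(w_0\) as a marked left anodyne map followed by a trivial fibration and treating the two cases: trivial fibrations are preserved by \(i_*\) (Lemma \ref{sizelemma}(1)), so \(w_1\) is a pullback of one; in the anodyne case one uses that \(w_0\) is a left deformation retract between fibrant objects, forms the pushout \(P^+=Y_0^\natural\sqcup_{X_0^\natural}X_1^+\) inside \(Y_1^\natural\), and solves a single lifting problem of
\(\{0\}\times Y_1^\natural\cup\mcyl\times P^+\to\mcyl\times Y_1^\natural\)
against \(Y_1^\natural\to L^\sharp\), exhibiting \(X_1^+\) as a deformation retract of \(Y_1^\natural\); this simultaneously shows \(w_1\) is marked left anodyne (hence an equivalence) and that \(X_1^+\to L^\sharp\) is a retract of \(Y_1^\natural\to L^\sharp\), hence a coCartesian fibration. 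No fibrant replacement relative to \(K\) is ever needed, which is exactly the point you would have to supply and cannot with the tools at hand.
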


\begin{proof}
We define \(X_1^+\xrightarrow{w_1}  Y^\natural_1\) as the pullback
\[
\begin{tikzcd}
X_1^+\rar \dar[swap]{w_1} & i_\ast X_0^\natural\dar\\
Y_1^\natural \rar & i_\ast Y_0^\natural
\end{tikzcd}
\]
The bottom horizontal map is given by the unit map \(Y^\natural_1\to i_\ast i^\ast Y_1^\natural\). Applying \(i^\ast\) and using the fact that \(i_\ast\) is fully faithful we see that \(w_1\) pulls back to \(w_0\). It remains to show that \(w_1\) is a coCartesian equivalence and that \(X_1^+\to L^\sharp\) is a coCartesian fibration.

Since \(w_0\) is a coCartesian equivalence between coCartesian fibrations, it factorizes as a marked left anodyne map followed by a trivial fibration by Theorem \ref{cocartmodelstructure}. Thus it suffices to show the assertion if \(w_0\) is in either one of these classes of maps. Let us first assume that \(w_0\) is a trivial fibration. Since \(i_\ast\) preserves trivial fibrations, by Lemma \ref{sizelemma} \(w_1\) is the pullback of a trivial fibration hence the assertion follows.

Now suppose that \(w_0\) is marked left anodyne. We claim that \(w_1\) is a left deformation retract, hence also marked left anodyne. First observe that since \(w_0\) is a marked left anodyne extension between fibrant objects, it is in particular a left deformation retract. Thus, there is a map \(r\colon Y^\natural_0\to X_0^\natural\) and a map
\[
h\colon \mcyl \times Y_0^\natural \to Y_0^\natural
\]
over \(K^\sharp\) such that
\begin{itemize}
	\item \(rw_0=id\)
	\item \(h_0=w_0r\) and \(h_1= id\)
	\item \(h\) is constant on \(X^\natural_0\)
\end{itemize}

Consider the pushout
\[
\begin{tikzcd}
X^\natural_0 \rar \dar[swap]{w_0} & X_1^+\dar\\
Y^\natural_0 \rar & P^+
\end{tikzcd}
\]
which induces a monomorphism \(j\colon P^+\to Y^\natural_1\). Now we consider the lifting problem
\[
\begin{tikzcd}
\{0\} \times Y_1^\natural \cup \mcyl \times P^+ \rar \dar & Y_1^\natural \dar\\
\mcyl \times Y_1^\natural \rar \urar[dashed] & L^\sharp
\end{tikzcd}
\]
The upper horizontal map is given by the identity on \(\{0\}\times Y_1^\natural\) and on \(\mcyl \times P^+\) it is the pushout of \(h\) on \(Y^\natural_0\) and the constant homotopy on \(\mcyl \times X_1^\natural\). The lower horizontal map is the constant homotopy. Since the map \(P^+\to Y^\natural_1\) is a monomorphism, the left vertical map is marked left anodyne hence a solution exists. The solution now exhibits \(X_1^+\) as a left deformation retract of \(Y_1^\natural\). It follows that \(w_1\) is in particular marked left anodyne, hence a coCartesian equivalence. Moreover the map \(X_1^+\to L^\sharp\) is a retract of the coCartesian fibration \(Y^\natural_1\to L^\sharp\), hence also a coCartesian fibration.

Finally, \(X_1^+\to L^\sharp\) is a pullback of \(\kappa\)-small coCartesian fibrations by virtue of Lemma \ref{sizelemma}, hence is \(\kappa\)-small. 
\end{proof}

\begin{corollary}\label{eqextprop}
Let \(p\colon X\to K\) be a coCartesian fibration and let \(i\colon K\to L\) be a trivial cofibration of the Joyal model structure, with \(L\) \(\kappa\)-small.
Then we can complete the dotted arrows in the square
\[
\begin{tikzcd}
X\rar[dashed] \dar[swap]{p} & Y\dar[dashed]{q}\\
K \rar{i} & L
\end{tikzcd}
\]
such that the square is a pullback and such that \(q\colon Y\to L\) is a coCartesian fibration. Moreover,
it is possible to perform this construction so that,
if \(p\) is \(\kappa\)-small, so is \(q\)\footnote{The assumption
on the size of \(L\) can be dropped. See Remark \ref{remark:dropsize} below.}.
\end{corollary}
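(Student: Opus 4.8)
The plan is to deduce this from Theorem \ref{extension}. Write $X^\natural\to K^\sharp$ for the coCartesian fibration $p$ equipped with its natural marking (the coCartesian edges). To feed Theorem \ref{extension} the monomorphism $i\colon K\to L$, I need to manufacture over $L$ a coCartesian fibration $Y_1^\natural\to L^\sharp$, a pullback square $Y_0^\natural\to Y_1^\natural$ over $i$, and an equivalence of coCartesian fibrations $w_0\colon X^\natural\to Y_0^\natural$ over $K^\sharp$; Theorem \ref{extension} then returns a coCartesian fibration $X_1^\natural\to L^\sharp$ whose pullback along $i$ is $X^\natural$, and passing to underlying simplicial sets (the forgetful functor preserves pullbacks) yields the asserted square with $q\colon Y:=X_1\to L$.

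To build $Y_1^\natural$, I would regard $X^\natural$ as an object of $\mSet/L^\sharp$ along $i$, i.e.\ form $i_!X^\natural$, and factor its structure map in the coCartesian model structure of Theorem \ref{cocartmodelstructure} over $L^\sharp$ as a trivial cofibration $j\colon i_!X^\natural\to Y_1^\natural$ followed by a marked left fibration $Y_1^\natural\to L^\sharp$; thus $Y_1^\natural\to L^\sharp$ is a coCartesian fibration and $j$ is a coCartesian equivalence. Since $i$ is a monomorphism, $i_!$ is fully faithful, so the unit $X^\natural\to i^\ast i_!X^\natural$ is an isomorphism (the structure map of $X$ factors through $K$); I then set $Y_0^\natural:=i^\ast Y_1^\natural$ and $w_0:=i^\ast(j)\colon X^\natural\cong i^\ast i_!X^\natural\to Y_0^\natural$. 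By construction $Y_0^\natural\to K^\sharp$ is a pullback of the marked left fibration $Y_1^\natural\to L^\sharp$, hence again a coCartesian fibration, and the square over $i$ is a pullback, exactly as required. That $w_0$ is an equivalence of coCartesian fibrations is where Theorem \ref{cocartesianinvariance} enters: since $i$ is a Joyal equivalence, $i_!\dashv i^\ast$ is a Quillen equivalence, and $j$ is a weak equivalence from the cofibrant object $i_!X^\natural$ to the fibrant object $Y_1^\natural$, so its adjunct $X^\natural\to Y_0^\natural$ — which is precisely $w_0$ — is a weak equivalence; as $X^\natural$ and $Y_0^\natural$ are fibrant over $K^\sharp$, $w_0$ is an equivalence of coCartesian fibrations.

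With these data in hand, Theorem \ref{extension} applied to $i$, the pullback square $Y_0^\natural\to Y_1^\natural$, and the equivalence $w_0$ produces $X_1^\natural\to L^\sharp$ with $X^\natural\cong K^\sharp\times_{L^\sharp}X_1^\natural$, and taking underlying simplicial sets finishes the first part of the statement. For the size assertion, assume $p$ is $\kappa$-small; then $X^\natural$ is $\kappa$-small over $K^\sharp$, and because the structure map of $X$ factors through the monomorphism $i$ the same is true of $i_!X^\natural$ over $L^\sharp$. Using the hypothesis that $L$ is $\kappa$-small, I would perform the factorization $i_!X^\natural\to Y_1^\natural\to L^\sharp$ by a bounded version of the small object argument so that $Y_1^\natural$ is still $\kappa$-small; then $Y_0^\natural=i^\ast Y_1^\natural$ is $\kappa$-small, every coCartesian fibration entering the above application of Theorem \ref{extension} is $\kappa$-small, and its final clause gives that $X_1^\natural\to L^\sharp$, hence $q$, is $\kappa$-small.

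I expect the one genuinely delicate point to be this last step: keeping the fibrant replacement $Y_1^\natural$ inside the $\kappa$-small objects. Everything else is a formal assembly of Theorems \ref{cocartmodelstructure}, \ref{cocartesianinvariance} and \ref{extension}, but bounding the size of $Y_1^\natural$ forces a careful choice of factorization, and it is exactly here that the hypothesis on $L$ is used — to be removed afterwards in Remark \ref{remark:dropsize} by a more careful argument.
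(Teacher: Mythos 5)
Your argument is correct and is essentially the paper's own proof: factor $X^\natural\to L^\sharp$ as a trivial cofibration (marked left anodyne) followed by a coCartesian fibration, use the Quillen equivalence of Theorem \ref{cocartesianinvariance} to see that the unit map $X^\natural\to i^\ast Y_1^\natural$ is a coCartesian equivalence, and then apply Theorem \ref{extension}, with the size bound handled by a bounded small object argument using that $L$ is $\kappa$-small. You also correctly isolate the only delicate point (keeping the fibrant replacement $\kappa$-small), which the paper likewise treats briefly and defers improving in Remark \ref{remark:dropsize}.
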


\begin{proof}
We may choose a commutative square in marked simplicial sets
\[
\begin{tikzcd}
X^\natural\rar{j} \dar[swap]{p} & (Y')^\natural\dar{q'}\\
K^\sharp \rar{i} & L^\sharp
\end{tikzcd}
\]
in which \(j\) is marked left anodyne and \(q'\) is a coCartesian fibration. One checks that this can be done such that \(q'\) is \(\kappa\)-small if \(p\) is \(\kappa\)-small,
using the small object argument. By \cite[Corollary 3.4]{cocartesiankim} the unit map \(X^\natural \to i^*(Y')^\natural\) is a coCartesian equivalence over \(K\). We may now apply Theorem \ref{extension} to obtain a pullback square
\[
\begin{tikzcd}
X^\natural\rar{j} \dar[swap]{p} & Y^\natural\dar{q}\\
K^\sharp \rar{i} & L^\sharp
\end{tikzcd}
\]
which induces the desired pullback square in simplicial sets.
\end{proof}

\begin{sub}
The next extension property concerns extending morphisms of coCartesian fibrations along trivial cofibrations of the Joyal model structure. We first fix some notation. Let \(p\colon X\to A\) be a map of simplicial sets. We denote the pullback of \(p\) along a map \(K\to A\) by
\[
\begin{tikzcd}
X_K\rar \dar & X\dar{p}\\
K\rar & A
\end{tikzcd}
\]
\end{sub}

\begin{proposition}\label{innerhornmorphextension}
Let \(K\to L\) be a trivial cofibration of the Joyal model structure. Suppose we have a pair of coCartesian fibrations 
\[
\begin{tikzcd}
X\drar & & Y\dlar\\
& L & 
\end{tikzcd}
\]
Suppose furthermore that there is a map
\[
\begin{tikzcd}
X_{K}\drar\ar{rr} & & Y_{K}\dlar\\
& K & 
\end{tikzcd}
\]
preserving coCartesian edges. Then there is a map
\[
\begin{tikzcd}
X\drar\ar{rr} & & Y\dlar\\
& L & 
\end{tikzcd}
\]
preserving coCartesian edges extending the map over \(K\).
\end{proposition}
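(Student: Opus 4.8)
The plan is to reduce the statement to an internal-hom / exponential computation and then invoke the extension property for coCartesian fibrations (Corollary \ref{eqextprop}) together with the fibrancy and lifting tools from the coCartesian model structure. First I would observe that a map $X \to Y$ over $L$ preserving coCartesian edges is the same datum as a section over $L$ of the ``internal mapping object'' $\mathrm{Fun}_L^{\mathrm{coCart}}(X,Y) \to L$ whose fiber over a vertex $\ell$ is the $\infty$-category of coCartesian-edge-preserving functors $X_\ell \to Y_\ell$; by the exponentiability results for coCartesian fibrations recalled from the companion paper \cite{cocartesiankim} (the ones alluded to in the introduction and used implicitly around Theorem \ref{cocartesianinvariance}), this object is itself a coCartesian fibration over $L$, built in marked simplicial sets as an appropriate $i_*$-style pushforward. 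The point is that restriction along $K \to L$ corresponds, on these mapping objects, to pullback, so the given map over $K$ is a section over $K$ of the restricted coCartesian fibration, and we must extend it to a section over $L$.

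Next I would set up the marked-simplicial-sets model. Working in $\mathbf{sSet}^+/L^\sharp$, form the coCartesian fibration $E^\natural \to L^\sharp$ representing $\mathrm{Fun}_L^{\mathrm{coCart}}(X^\natural, Y^\natural)$; its pullback along $i\colon K \to L$ is the corresponding object over $K^\sharp$, by the Beck--Chevalley-type compatibilities from \cite{cocartesiankim}. A section of $E^\natural \to L^\sharp$ over $K^\sharp$ is, by adjunction, a map $K^\sharp \to i^* E^\natural$ splitting the projection; equivalently a map $K^\sharp \to E^\natural$ over $L^\sharp$. So the extension problem becomes the lifting problem
\[
\begin{tikzcd}
K^\sharp \rar \dar[swap]{i} & E^\natural \dar\\
L^\sharp \rar[equal] \urar[dashed] & L^\sharp .
\end{tikzcd}
\]
Since $i\colon K \to L$ is a trivial cofibration of the Joyal model structure, $i^\sharp\colon K^\sharp \to L^\sharp$ is a trivial cofibration of the coCartesian model structure on $\mathbf{sSet}^+$ (it is a cofibration, and it is a coCartesian equivalence by Theorem \ref{cocartesianinvariance}, or directly because marking all edges of a Joyal equivalence yields a coCartesian equivalence). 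The target $E^\natural \to L^\sharp$ is a fibrant object of $\mathbf{sSet}^+/L^\sharp$, being a marked left fibration, so by Theorem \ref{cocartmodelstructure} the lift exists. Unwinding the adjunctions gives the required coCartesian-edge-preserving map $X \to Y$ over $L$ restricting to the given one over $K$.

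The main obstacle, and the step that requires care, is the first one: making precise the claim that $\mathrm{Fun}_L^{\mathrm{coCart}}(X,Y) \to L$ is again a coCartesian fibration and that its formation commutes with base change along $i$. This is exactly the exponentiability / Beck--Chevalley package for coCartesian fibrations over simplicial sets, which in the present setup is imported from \cite{cocartesiankim}; concretely one realizes $E^\natural$ via an $i_*$-type right adjoint in marked simplicial sets applied to the (marked) internal hom of $X^\natural$ and $Y^\natural$ over $L^\sharp$, and then invokes the cited theorems that such pushforwards of marked left fibrations are marked left fibrations and are stable under the relevant pullbacks. Once that is granted, everything else is a formal consequence of the model-category axioms (a cofibration-versus-fibrant lifting problem where the cofibration is a weak equivalence), so no further hands-on simplicial combinatorics is needed. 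An alternative, more self-contained route would avoid the internal hom entirely and instead argue by a direct skeletal induction: present $i$ as a transfinite composite of pushouts of inner horn inclusions $\Lambda^n_k \hookrightarrow \Delta^n$ (possible since it is a Joyal trivial cofibration), and for each such cell extend the map of coCartesian fibrations by solving a lifting problem against the coCartesian fibration $Y \to L$, using Lemma \ref{cocartovereq} and Lemma \ref{twothreecocartesian} to control the images of coCartesian edges; this is closer in spirit to the arguments of \cite[Chapter~5]{cisinskibook} but is combinatorially heavier, so I would present the internal-hom argument as the primary proof.
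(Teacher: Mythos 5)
There is a genuine gap at the very first step of your primary argument: the fiberwise mapping object $\underline{\Hom}^\sharp_{L^\sharp}(X^\natural,Y^\natural)\to L$ is \emph{not} a marked left fibration over $L^\sharp$, i.e.\ not a coCartesian fibration, so it is not fibrant in the coCartesian model structure of Theorem \ref{cocartmodelstructure} and your lifting problem against the trivial cofibration $K^\sharp\to L^\sharp$ cannot be solved by appealing to that model structure. Concretely, a marked edge of this mapping object over an edge $e\colon \ell_0\to\ell_1$ of $L$ from $F_0\colon X_{\ell_0}\to Y_{\ell_0}$ to $F_1\colon X_{\ell_1}\to Y_{\ell_1}$ amounts to an equivalence $g_!F_0\simeq F_1 f_!$, where $f_!,g_!$ are the transport functors of $X$ and $Y$; a coCartesian lift of $e$ starting at $F_0$ would therefore require an extension of $g_!F_0$ along $f_!$ (a Kan-extension-type datum), which need not exist. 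For instance, over $L=\Delta^1$ take $X$ with fibers $X_0=\{a,b\}$ discrete, $X_1=\Delta^1$ and transport $a\mapsto 0$, $b\mapsto 1$, and $Y$ with $Y_0=\{a,b\}$, $Y_1=\{0,1\}$ discrete and transport the evident bijection; then $F_0=\mathrm{id}$ admits no $F_1$ at all with $F_1f_!\simeq g_!F_0$. What \emph{is} true is only that this mapping object is an isofibration (Joyal fibration) over $L$ --- but that statement is Proposition \ref{internalhomfibration} of the paper, whose proof is precisely an application of the proposition you are asked to prove, so invoking it (or an equivalent fibrancy statement imported from \cite{cocartesiankim}) here would be circular. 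Your fallback sketch also does not work as stated: a trivial cofibration of the Joyal model structure is not in general a transfinite composite of pushouts of inner horn inclusions (e.g.\ $\Delta^0\to J$), so one cannot reduce to cell-by-cell extensions along inner horns.

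The paper's proof goes the other way around and stays entirely on the total spaces: since $K\to L$ is a Joyal trivial cofibration and $X\to L$ is a coCartesian fibration, the induced map $X^\natural_K\to X^\natural$ of marked simplicial sets is marked left anodyne (this is where Theorem \ref{cocartesianinvariance} together with the cited lemma from \cite{contrakim} enters --- it is the genuinely nontrivial input, expressing invariance of the coCartesian model structures under pullback along categorical equivalences), and then the required coCartesian-edge-preserving extension $X^\natural\to Y^\natural$ over $L^\sharp$ exists simply because $Y^\natural\to L^\sharp$ is a marked left fibration. If you want to keep an internal-hom formulation, you must first prove the present proposition in this direct form and only afterwards deduce the fibrancy of $\underline{\Hom}^\sharp$, as the paper does.
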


\begin{proof}
We need to construct the following extension in marked simplicial sets
\[
\begin{tikzcd}[row sep=small,column sep=small]
X_{K}^\natural\drar \ar{rr} \ar{ddr} & & Y_{K}^\natural\drar\ar{ddl} & \\
{} & X^\natural\ar[dashed]{rr}\ar{ddr} & & Y^\natural\ar{ddl}\\
{} & K^\sharp\drar & & \\
{} & {} & L^\sharp & 
\end{tikzcd}
\]
By Theorem \ref{cocartesianinvariance} and \cite[Lemma 2.31]{contrakim}, the map \(X^\natural_K\to X^\natural\) is marked left anodyne, hence the desired extension exists.
\end{proof}

\begin{sub}
Under further assumptions, we can also extend morphisms of coCartesian fibrations along outer horn inclusions.
\end{sub}

\begin{theorem}\label{lefthornmorphextension}
Suppose we have a pair of coCartesian fibrations 
\[
\begin{tikzcd}
X\drar & & Y\dlar\\
& \nsimplex & 
\end{tikzcd}
\]
and assume that the pullbacks
\[
\begin{tikzcd}
X_{\Delta^{\{0,1\}}}\drar & & Y_{\Delta^{\{0,1\}}}\dlar\\
& \Delta^{\{0,1\}} & 
\end{tikzcd}
\]
have the property that an edge is coCartesian if and only if it is Cartesian. Suppose furthermore that there is a map
\[
\begin{tikzcd}
X_{\Lambda^n_0}\drar\ar{rr} & & Y_{\Lambda^n_0}\dlar\\
& \Lambda^n_0 & 
\end{tikzcd}
\]
preserving coCartesian edges. Then there is a map
\[
\begin{tikzcd}
X\drar\ar{rr} & & Y\dlar\\
& \nsimplex & 
\end{tikzcd}
\]
preserving coCartesian edges, which extends the map over the outer horn.
\end{theorem}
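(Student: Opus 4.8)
The statement is a lifting/extension result for a morphism of coCartesian fibrations along the outer horn inclusion $\Lambda^n_0 \to \Delta^n$, under the hypothesis that over the "non-special" edge $\Delta^{\{0,1\}}$ the coCartesian and Cartesian edges coincide (i.e.\ that edge is "locally groupoidal" in the relevant sense). The natural approach is to translate everything into the language of marked simplicial sets and reduce to an anodyne extension problem, exactly as in the proof of Proposition~\ref{innerhornmorphextension}. We need to construct the dotted arrow in the diagram of marked simplicial sets
\[
\begin{tikzcd}[row sep=small,column sep=small]
X_{\Lambda^n_0}^\natural\drar \ar{rr} \ar{ddr} & & Y_{\Lambda^n_0}^\natural\drar\ar{ddl} & \\
{} & X^\natural\ar[dashed]{rr}\ar{ddr} & & Y^\natural\ar{ddl}\\
{} & (\Lambda^n_0)^\sharp\drar & & \\
{} & {} & (\Delta^n)^\sharp &
\end{tikzcd}
\]
and for this it suffices to show that the canonical map $j\colon X_{\Lambda^n_0}^\natural \to X^\natural$ is a trivial cofibration in the coCartesian model structure over $(\Delta^n)^\sharp$ — indeed, then $j$ is left anodyne relative to the fibrant object $X^\natural \to (\Delta^n)^\sharp$, but we actually want to lift against $Y^\natural$, which is fibrant over $(\Delta^n)^\sharp$, so we need $j$ to be a trivial \emph{cofibration}, and then the dashed map exists by the lifting property of fibrant objects against trivial cofibrations. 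So the whole theorem reduces to: \emph{the inclusion $\Lambda^n_0 \hookrightarrow \Delta^n$ induces, on pullbacks of a coCartesian fibration $X \to \Delta^n$ whose restriction to $\Delta^{\{0,1\}}$ has Cartesian $=$ coCartesian edges, a coCartesian equivalence $X_{\Lambda^n_0}^\natural \to X^\natural$.}

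The first reduction step is to observe that $\Lambda^n_0 \hookrightarrow \Delta^n$ factors through the spine or, better, through the "free-walking composite" $\Delta^{\{0,1\}} \cup_{\{1\}} \Delta^{\{1,\dots,n\}}$; but the cleanest route is to note that $\Lambda^n_0 = \Delta^{\{0,1\}} \cup_{\{1\}} \Delta^{\{1,\dots,n\}}$ when $n \ge 2$ is false — rather $\Lambda^n_0$ contains the face $\Delta^{\{1,\dots,n\}}$ and all faces through vertex $0$. The key structural fact we want is that $\Lambda^n_0 \hookrightarrow \Delta^n$ is generated (as a pushout-and-composite) by the inclusion $\Delta^{\{0,1\}} \cup_{\{1\}} \Delta^{\{1,\dots,n\}} \hookrightarrow \Delta^n$, which in turn is a composite of a left-anodyne part and the single outer horn $\Lambda^2_0 \subset \Delta^2$-type data. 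I would therefore first handle the inner-anodyne part using Proposition~\ref{innerhornmorphextension} (or Theorem~\ref{cocartesianinvariance} together with \cite[Lemma~2.31]{contrakim}), reducing to the case $n = 2$ with the horn $\Lambda^2_0 = \Delta^{\{0,1\}} \cup_{\{1\}} \Delta^{\{1,2\}}$. In this base case, the edge $\Delta^{\{0,1\}}$ is the one required to have Cartesian $=$ coCartesian edges; by Lemma~\ref{cocartovereq} applied to the pullback $X_{\Delta^{\{0,1\}}} \to \Delta^{\{0,1\}}$ (a coCartesian fibration over $\Delta^1$ in which, after marking, every coCartesian edge is an equivalence when the base edge is — but more to the point, since coCartesian $=$ Cartesian, the fibration $X_{\Delta^{\{0,1\}}} \to \Delta^1$ is a \emph{bifibration}, hence classified by an equivalence of $\infty$-categories between the two fibers), we can reverse the direction of transport along $\Delta^{\{0,1\}}$.

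Concretely, for the base case $n=2$: the hypothesis says the coCartesian fibration $X_{\Delta^{\{0,1\}}} \to \Delta^{\{0,1\}}$ is also a Cartesian fibration, so the marked map $X_{\Delta^{\{0,1\}}}^\natural \to (\Delta^{\{0,1\}})^\sharp$ is simultaneously a marked left fibration and a marked right fibration. Then the inclusion of the endpoint $\{0\} \hookrightarrow \Delta^{\{0,1\}}$ pulls back to a map $X_{\{0\}}^\natural \to X_{\Delta^{\{0,1\}}}^\natural$ which is marked \emph{right} anodyne (not left anodyne!) — and here is the crucial point that the groupoidal-edge hypothesis buys us: when Cartesian $=$ coCartesian, marked right anodyne and marked left anodyne maps into this object behave interchangeably, because the fibration is "bi-anodyne" with respect to both. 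So I would prove: the inclusion $X_{\Lambda^2_0}^\natural \hookrightarrow X^\natural$ is marked left anodyne over $(\Delta^2)^\sharp$, by decomposing $X^\natural$ as a pushout over $X_{\Delta^{\{0,1\}}}^\natural$ of $X_{\Lambda^2_0}^\natural$ with a piece glued along $X_{\{1\}}^\natural$, and using Lemma~\ref{smooth}-type stability of right-anodyne maps under pullback along marked left fibrations (applied here in the variant where, by the hypothesis, we can use smoothness/properness in the reversed direction). Assembling: $X_{\Lambda^n_0}^\natural \to X^\natural$ is a composite of a map obtained from an inner-anodyne base change (a coCartesian equivalence by \ref{innerhornmorphextension}'s underlying lemma) with the $n=2$ base case, hence is a coCartesian equivalence, hence $Y^\natural$ being fibrant over $(\Delta^n)^\sharp$ admits the required lift.

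**Main obstacle.** The delicate point is the base case $n=2$ and, specifically, making rigorous that the hypothesis "coCartesian $=$ Cartesian on $\Delta^{\{0,1\}}$" is exactly what is needed to turn the outer-horn inclusion $\Lambda^2_0 \hookrightarrow \Delta^2$ into something that induces a coCartesian equivalence on total spaces — normally the outer horn $\Lambda^n_0$ is associated with \emph{right} (Cartesian) fibrations, and only the bifibration hypothesis reconciles the two handednesses. I expect the cleanest formulation is to invoke the companion results of \cite{cocartesiankim} on Beck–Chevalley/smoothness: under the hypothesis, the relevant base change square is both smooth and proper, so both $i_!$ and $i_*$ interact nicely with restriction, and the unit/counit comparison maps are coCartesian equivalences. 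Verifying that the glued-pushout decomposition of $X$ over $\Lambda^2_0$ genuinely produces a composite of marked anodyne maps (as opposed to merely coCartesian equivalences) — so that the lift against the fibrant $Y^\natural$ is literally by the RLP, not just up to homotopy — will require care with which generating class (A1/A2/B1/B2 of Definition~\ref{markedgenerators}) each elementary step belongs to, and this bookkeeping is the real content of the proof.
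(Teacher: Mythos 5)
There is a genuine gap. Your reduction is sound in principle: if \(X^\natural_{\Lambda^n_0}\to X^\natural\) were a trivial cofibration in the coCartesian model structure over \((\Delta^n)^\sharp\), then a strict lift against the fibrant object \(Y^\natural\to(\Delta^n)^\sharp\) would exist and, since you lift natural markings to natural markings, it would automatically preserve coCartesian edges. But you never prove that claim (you yourself defer it as ``the real content''), and the route you sketch toward it does not work. First, your combinatorics of the horn is backwards: \(\Lambda^n_0\) consists of the faces \emph{containing} the vertex \(0\) and is missing precisely the face \(\Delta^{\{1,\dots,n\}}\), so the proposed factorization through \(\Delta^{\{0,1\}}\cup_{\{1\}}\Delta^{\{1,\dots,n\}}\) and the ``inner-anodyne part, then an \(n=2\) base case'' reduction has no basis as stated. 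Second, in the base case there is no pushout decomposition of \(X\) of the kind you describe: writing \(X\) as a pushout of restrictions requires a pair of subcomplexes covering \(\Delta^2\), and \(\Lambda^2_0\) together with anything not containing the nondegenerate \(2\)-simplex does not cover. Third, the handedness is off: \(\{0\}^\sharp\to(\Delta^{\{0,1\}})^\sharp\) is cellular marked \emph{left} anodyne, and the hypothesis makes \(X^\natural_{\Delta^{\{0,1\}}}\to(\Delta^{\{0,1\}})^\sharp\) a marked \emph{right} fibration, so the dual of Lemma \ref{smooth} gives a marked left anodyne pullback; neither Lemma \ref{smooth} nor its dual ever converts left-handed into right-handed anodynes as your sketch needs. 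Finally, the underlying claim that \(X^\natural_{\Lambda^n_0}\to X^\natural\) is a coCartesian equivalence over \((\Delta^n)^\sharp\) is true, but justifying it at this point of the development essentially amounts to computing the derived \(i_!\) fiberwise (a fragment of straightening), which is not yet available; so the ``Beck--Chevalley/smoothness'' appeal does not discharge it.

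The paper's proof sidesteps all of this by a different choice of marking: mark only the edge \(\Delta^{\{0,1\}}\) in \(\Lambda^n_0\) and \(\Delta^n\), and only the coCartesian edges lying over it in \(X\) and \(Y\). With this partial marking, \((\Lambda^n_0)^+\to(\Delta^n)^+\) is a \emph{cellular} marked left anodyne map, and the hypothesis together with Lemma \ref{localtoglobal} makes \(X^+\) and \(Y^+\) simultaneously marked left and marked right fibrations over \((\Delta^n)^+\); the dual form of Lemma \ref{smooth} then shows directly that \(X^+_{\Lambda^n_0}\to X^+\) is marked left anodyne, so the lift against the marked left fibration \(Y^+\) exists by the literal lifting property, with no model-categorical input. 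The price of the partial marking is exactly the bookkeeping you hoped to avoid: one must check afterwards that the lift preserves \emph{all} coCartesian edges, which is clear except for \(n=2\) over \(\Delta^{\{1,2\}}\), where one composes with a Cartesian\(=\)coCartesian edge over \(\Delta^{\{0,1\}}\) and applies Lemma \ref{twothreecocartesian} twice. If you want to salvage your cleaner ``natural markings'' formulation, you would have to prove the trivial-cofibration claim by some such cellular argument anyway, so the partial-marking device is not an optional convenience but the step that makes the proof go through.
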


\begin{proof}
We first construct the dotted arrow in the diagram
\[
\begin{tikzcd}[row sep=small,column sep=small]
X_{\Lambda^n_0}\drar \ar{rr} \ar{ddr} & & Y_{\Lambda^n_0}\drar\ar{ddl} & \\
{} & X\ar[dashed]{rr}\ar{ddr} & & Y\ar{ddl}\\
{} & \Lambda^n_0\drar & & \\
{} & {} & \nsimplex & 
\end{tikzcd}
\]
and then argue why it preserves coCartesian edges. We consider the following extension problem in marked simplicial sets:
\[
\begin{tikzcd}[row sep=small,column sep=small]
X_{\Lambda^n_0}^+\drar \ar{rr} \ar{ddr} & & Y_{\Lambda^n_0}^+\drar\ar{ddl} & \\
{} & X^+\ar[dashed]{rr}\ar{ddr} & & Y^+\ar{ddl}\\
{} & (\Lambda^n_0)^+\drar & & \\
{} & {} & (\nsimplex)^+ & 
\end{tikzcd}
\]
Here the edge \(\Delta^{\{0,1\}}\) in \((\Delta^n)^+\) and \((\Lambda^n_0)^+\) is marked and we mark the corresponding coCartesian edges in \(X^+\) and \(Y^+\). By assumption they are also the locally cartesian edges over \(\Delta^{\{0,1\}}\) and by Lemma \ref{localtoglobal} they are in fact cartesian. In particular, the vertical arrows are marked left as well as marked right fibrations. By Lemma \ref{smooth}, the map 
\[
X^+_{\Lambda^n_0}\to X^+
\]
is marked left anodyne hence the extension exists, inducing an extension of the original extension problem in simplicial sets.

The only thing left to show is that this extension problem maps coCartesian edges to coCartesian edges. The cases \(n\geq 3\) and \(n=1\) are clear. In case \(n=2\), the extension preserves coCartesian edges over \(\Delta^{\{0,1\}}\) and \(\Delta^{\{0,2\}}\) by construction. Thus we need to show that the extension maps coCartesian edges in \(X^+\) over \(\Delta^{\{1,2\}}\) to coCartesian edges in \(Y^+\). Thus consider such a coCartesian edge in \(X^+\)
\[
\begin{tikzcd}[row sep=small,column sep=small]
\cdot \ar{r}{f}&\cdot
\end{tikzcd}
\]
We may choose a Cartesian edge over \(\Delta^{\{0,1\}}\) and take composition to obtain a commutative triangle
\[
\begin{tikzcd}
{} & \cdot \drar{f} & \\
\cdot \urar \ar{rr} & & \cdot
\end{tikzcd}
\]
By assumption the Cartesian edge is also coCartesian hence, since coCartesian edges compose, Lemma \ref{twothreecocartesian}, all edges in the triangle are coCartesian. The extension maps this triangle to a triangle in \(Y^+\) in which all edges but the image of \(f\) is coCartesian. By Lemma \ref{twothreecocartesian} this implies that the image of \(f\) has also to be coCartesian.
\end{proof}

\begin{sub}
This Theorem has a companion dual version which we state individually since the proof that the extension preserves coCartesian edges is slightly different.
\end{sub}

\begin{theorem}\label{righthornmorphextension}
Suppose we have a pair of coCartesian fibrations 
\[
\begin{tikzcd}
X\drar & & Y\dlar\\
& \nsimplex & 
\end{tikzcd}
\]
and assume that the pullbacks
\[
\begin{tikzcd}
X_{\Delta^{\{n-1,n\}}}\drar & & Y_{\Delta^{\{n-1,n\}}}\dlar\\
& \Delta^{\{n-1,n\}} & 
\end{tikzcd}
\]
have the property that an edge is coCartesian if and only if it is Cartesian. Suppose furthermore that there is a map
\[
\begin{tikzcd}
X_{\Lambda^n_n}\drar\ar{rr} & & Y_{\Lambda^n_n}\dlar\\
& \Lambda^n_n & 
\end{tikzcd}
\]
preserving coCartesian edges. Then there is a map
\[
\begin{tikzcd}
X\drar\ar{rr} & & Y\dlar\\
& \nsimplex & 
\end{tikzcd}
\]
preserving coCartesian edges, which extends the map over the outer horn.
\end{theorem}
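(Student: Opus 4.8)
The plan is to reproduce the proof of Theorem \ref{lefthornmorphextension} with the roles of ``left'' and ``right'' systematically interchanged --- in the anodyne classes, in the notion of fibration, and in the use of Lemma \ref{smooth} --- and with the distinguished edge taken to be $\Delta^{\{n-1,n\}}$ instead of $\Delta^{\{0,1\}}$. Concretely, I would set up the extension problem in marked simplicial sets exactly as there, now marking the edge $\Delta^{\{n-1,n\}}$ in $(\nsimplex)^+$ and $(\Lambda^n_n)^+$ and marking the corresponding coCartesian edges of $X$ and $Y$. By hypothesis these coCartesian edges are locally Cartesian over $\Delta^{\{n-1,n\}}$, hence Cartesian by Lemma \ref{localtoglobal}, so the structure maps to $(\nsimplex)^+$ are simultaneously marked left and marked right fibrations. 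Since the inclusion $\Lambda^n_n\to\nsimplex$ with $\Delta^{\{n-1,n\}}$ marked is cellular marked right anodyne, Lemma \ref{smooth} shows that $X^+_{\Lambda^n_n}\to X^+$ is marked right anodyne; as $Y^+\to(\nsimplex)^+$ is a marked right fibration, the extension exists, and discarding markings gives the desired map over $\nsimplex$ extending the given one over the outer horn.

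As in the proof of Theorem \ref{lefthornmorphextension}, preservation of coCartesian edges is automatic when $n\neq 2$: for $n\geq 3$ every edge of $\nsimplex$ already lies in $\Lambda^n_n$, so the statement follows because the extension restricts to the given, coCartesian-edge-preserving, map over the horn; and for $n=1$ the only edge $\Delta^{\{0,1\}}=\Delta^{\{n-1,n\}}$ is marked, so coCartesian edges over it are sent to marked, hence coCartesian, edges by construction.

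The case $n=2$ is where the argument genuinely differs from (rather than merely mirrors) the proof of Theorem \ref{lefthornmorphextension}. Here $\Lambda^2_2$ contains $\Delta^{\{0,2\}}$ and $\Delta^{\{1,2\}}$, so only edges over $\Delta^{\{0,1\}}$ must be checked; and $\Delta^{\{0,1\}}$ is the \emph{initial} edge of a triangle, the one slot whose coCartesianness is not recovered from the other two by the $2$-out-of-$3$ property of Lemma \ref{twothreecocartesian}. I would proceed as follows. Given a coCartesian edge $f$ of $X$ over $\Delta^{\{0,1\}}$, choose a coCartesian edge $g$ over $\Delta^{\{1,2\}}$ starting at the endpoint of $f$; by hypothesis and Lemma \ref{localtoglobal}, $g$ is also Cartesian. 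Fill a $2$-simplex $\sigma$ of $X$ with $d_2\sigma=f$ and $d_0\sigma=g$; then $h=d_1\sigma$ over $\Delta^{\{0,2\}}$ is coCartesian because coCartesian edges compose (Lemma \ref{twothreecocartesian}). Applying the extension turns $\sigma$ into a $2$-simplex $\bar\sigma$ of $Y$ whose face $\bar g=d_0\bar\sigma$ over $\Delta^{\{1,2\}}\subseteq\Lambda^2_2$ is coCartesian --- hence also Cartesian, again by hypothesis on $Y$ and Lemma \ref{localtoglobal} --- and whose face $\bar h=d_1\bar\sigma$ over $\Delta^{\{0,2\}}\subseteq\Lambda^2_2$ is coCartesian; the task is to conclude that $\bar f=d_2\bar\sigma$ is coCartesian. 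For this I would compare $\bar f$ with the coCartesian lift of $\Delta^{\{0,1\}}$ at the source of $\bar f$: completing that lift to a $2$-simplex of $Y$ in the same way yields coCartesian edges over $\Delta^{\{1,2\}}$ and over $\Delta^{\{0,2\}}$, the latter being a coCartesian lift of $\Delta^{\{0,2\}}$ at the same object as $\bar h$. Uniqueness of coCartesian lifts identifies the two $\Delta^{\{0,2\}}$-faces; the hypothesis over $\Delta^{\{1,2\}}$ makes the new $\Delta^{\{1,2\}}$-face Cartesian, so uniqueness of Cartesian lifts identifies it with $\bar g$; and since a $2$-simplex with Cartesian $d_0$-face is determined up to homotopy by its $d_0$- and $d_1$-faces, the two $2$-simplices agree, whence $\bar f$ is equivalent to a coCartesian edge and is therefore coCartesian.

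The main obstacle is exactly this last step. Because the edge to be checked now occupies the initial slot of a triangle, Lemma \ref{twothreecocartesian} is of no direct use, and one is forced to invoke the hypothesis on $\Delta^{\{n-1,n\}}$ in an essential way --- effectively through the fact that, over $\Delta^1$, the coincidence of coCartesian and Cartesian edges amounts to invertibility of the transport functor, so that a coCartesian lift over $\Delta^{\{1,2\}}$ is pinned down by its target. Everything else --- the marked extension and the attendant bookkeeping --- is a routine dualization of the proof of Theorem \ref{lefthornmorphextension}.
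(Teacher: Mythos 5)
Your proposal is correct, and its first half (the marked extension) is exactly the paper's argument: mark $\Delta^{\{n-1,n\}}$ and the corresponding coCartesian edges, use Lemma \ref{localtoglobal} plus the hypothesis to see that $X^+$ and $Y^+$ are simultaneously marked left and marked right fibrations over $(\Delta^n)^+$, and apply Lemma \ref{smooth} to conclude that $X^+_{\Lambda^n_n}\to X^+$ is marked right anodyne, so the lift against the marked right fibration $Y^+$ exists. Where you genuinely diverge is the $n=2$ preservation step. The paper does not compare two $2$-simplices: it factors the image edge $\bar f$ as a coCartesian lift $e$ of $\Delta^{\{0,1\}}$ followed by an edge $u$ in the fiber over the vertex $1$, observes via Lemma \ref{twothreecocartesian} that the composite of $u$ with the coCartesian (hence, by hypothesis, Cartesian) edge over $\Delta^{\{1,2\}}$ is coCartesian and therefore Cartesian, applies the Cartesian form of Lemma \ref{twothreecocartesian} once more to see that $u$ is Cartesian, and concludes that $u$ is an equivalence since it lies in a fiber, so $\bar f\simeq e$ is coCartesian. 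Your route instead invokes essential uniqueness of coCartesian lifts (for the $\Delta^{\{0,2\}}$-faces), essential uniqueness of Cartesian lifts (for the $\Delta^{\{1,2\}}$-faces), and contractibility of $\Lambda^2_2$-fillers with Cartesian $d_0$-face. This works, but be aware that your ``identifications'' are only equivalences, not equalities: the two triangles have different vertices over $1$ and over $2$, so ``the two $2$-simplices agree'' has to mean that chaining the equivalence of $\Delta^{\{0,2\}}$-faces with the uniqueness of Cartesian lifts produces an equivalence $v$ in the fiber over $1$ from the target of $e$ to the target of $\bar f$, and then the Cartesian property of $\bar g$ identifies $\bar f$ with $v\circ e$, which is coCartesian. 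Writing out this bookkeeping is roughly the same amount of work as the paper's two applications of Lemma \ref{twothreecocartesian} (together with ``Cartesian in a fiber implies equivalence'', the dual of Lemma \ref{cocartovereq}), and the paper's factorization avoids comparing fillers altogether; your version, on the other hand, makes the role of essential uniqueness of (co)Cartesian lifts more transparent. In both versions the hypothesis that coCartesian and Cartesian edges coincide over $\Delta^{\{n-1,n\}}$ enters in an essential and equivalent way.
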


\begin{proof}
We need to solve the extension problem
\[
\begin{tikzcd}[row sep=small,column sep=small]
X_{\Lambda^n_n}^+\drar \ar{rr} \ar{ddr} & & Y_{\Lambda^n_n}^+\drar\ar{ddl} & \\
{} & X^+\ar[dashed]{rr}\ar{ddr} & & Y^+\ar{ddl}\\
{} & (\Lambda^n_n)^+\drar & & \\
{} & {} & (\nsimplex)^+ & 
\end{tikzcd}
\]
Where we mark the edge \(\Delta^{\{n-1,n\}}\) in \(\Delta^n\) and \(\Lambda^n_n\) and the corresponding coCartesian edges in \(X\) and \(Y\). This time, by assumption and Lemma \ref{localtoglobal} the vertical maps are marked left and marked right fibrations. By Lemma \ref{smooth} the map
\[
X^+_{\Lambda^n_n}\to X^+
\]
is marked right anodyne, hence an extension exists.

It remains to show that the solution respects coCartesian edges. As before, it suffices to show the case \(n=2\) and that the extension respects coCartesian edges over \(\Delta^{\{0,1\}}\). Let 
\[
\begin{tikzcd}[row sep=small,column sep=small]
\cdot \ar{r}{f}&\cdot
\end{tikzcd}
\]
be an edge in \(Y\) which is in the image of a coCartesian edge over \(\Delta^{\{0,1\}}\) in \(X\). We claim that it is equivalent to a coCartesian edge and hence itself coCartesian. To this end let \(\cdot \xrightarrow{g}\cdot\) be a coCartesian edge of \(X\) mapping to \(f\). Choosing coCartesian lifts and taking composition, we obtain a commutative triangle
\[
\begin{tikzcd}[row sep=small,column sep=small]
{} & \cdot \drar & \\
\cdot \urar{g} \ar{rr} & & \cdot
\end{tikzcd}
\]
in which each edge is coCartesian in \(X\) and this maps to a triangle
\[
\begin{tikzcd}[row sep=small,column sep=small]
{} & \cdot \drar & \\
\cdot \urar{f} \ar{rr} & & \cdot
\end{tikzcd}
\]
in which each edge except possibly \(f\) is coCartesian. Choosing a coCartesian edge over \(\Delta^{\{0,1\}}\), we obtain a factorization of the edge \(f\) into a coCartesian edge followed by an edge in the fiber over \(Y_1\). This leads to the following 4-simplex in \(Y\)
\[
\begin{tikzcd}[row sep=small,column sep=small]
{} & \cdot \dar \ar[bend left]{ddr}& \\
{} & \cdot \drar{\ast} & \\
\cdot \ar[bend left]{uur}{\ast}\urar{f} \ar{rr}{\ast} & & \cdot
\end{tikzcd}
\]
in which the edges marked with \((\ast)\) are known to be coCartesian. It follows from Lemma \ref{twothreecocartesian} that the outer edge is also coCartesian and by assumption it is also Cartesian. Thus again by Lemma \ref{twothreecocartesian} the vertical edge is Cartesian. Since it lies in the fiber it is actually an equivalence, proving that \(f\) is equivalent to a coCartesian edge.
\end{proof}


\section{The universal coCartesian fibration}

\begin{sub}
In this section we construct the universal coCartesian fibration, which classifies small coCartesian fibrations. The construction and proof that the codomain of the universal coCartesian fibration is an \(\infty\)-category first appeared in the second authors thesis \cite{kimthesis}. We give a new, more streamlined proof here.
\end{sub}

\begin{sub}
We continue to fix a regular cardinal \(\kappa\). We let \(\U(\kappa)\) be a universe classifying \(\kappa\)-small maps of simplicial sets and having the property that for any monomorphism \(K\to L\) and any diagram
\[
\begin{tikzcd}[row sep=tiny,column sep=small]
X\ar{dd}\drar \ar{rr} & & \U(\kappa)_\bullet\ar{dd}\\
 & Y \ar{dd} \urar[dashed] & \\
K\ar{rr} \drar & & \U(\kappa)\\
 & L\urar[dashed] & 
\end{tikzcd}
\]
in which the vertical arrows are \(\kappa\)-small and the squares are pullbacks, the dotted arrows exists such that the induced square is also a pullback. A construction of \(\U(\kappa)\) can be found in \cite{klvsimplicial} or \cite[Definition 5.2.3]{cisinskibook}. An \(n\)-simplex of \(\U(\kappa)\) is roughly speaking given by a \(\kappa\)-small map \(X\to \Delta^n\) and some extra data such that \(\U(\kappa)\) is indeed a simplicial set.
\end{sub}

\begin{definition}\label{def:univcoCart}
We define \(\Q_\kappa\subset \U(\kappa)\) to be the subobject spanned by \(\kappa\)-small coCartesian fibrations \(X\to \Delta^n\)
and the map \(\quniv\colon \Q_{\kappa,\bullet} \to \Q_\kappa\) to be the pullback
\[
\begin{tikzcd}
\Q_{\kappa,\bullet}\dar[swap]{\quniv}\rar & \U(\kappa)\dar\\
\Q_\kappa\rar & \U(\kappa)
\end{tikzcd}
\]
\end{definition}

\begin{sub}
The map \(\quniv\) classifies \(\kappa\)-small coCartesian fibrations, i.e. for any coCartesian fibration \(p\colon X\to A\), such that \(\Delta^n\times_AX\) is a
\(\kappa\)-small simplicial set for any map \(\Delta^n\to A\),
there exists a classifying map \(F\colon A\to \Q_\kappa\) and a pullback square
\[
\begin{tikzcd}
X\rar\dar[swap]{p} & \Q_{\kappa,\bullet}\dar{\quniv}\\
A\rar{F} & \Q_{\kappa}
\end{tikzcd}
\]
Sometime, we will omit the reference to \(\kappa\) and simply speak of small
coCartesian fibrations. In this case, we will also write
\(\quniv\colon \Q_{\bullet} \to \Q\) instead of
\(\quniv\colon \Q_{\kappa,\bullet} \to \Q_\kappa\).
\end{sub}

\begin{proposition}
The map \(\quniv\colon \Q_{\kappa,\bullet} \to \Q_\kappa\) is a coCartesian fibration.
\end{proposition}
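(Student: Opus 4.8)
The plan is to verify the two lifting conditions of Definition~\ref{cartedge} directly, reducing everything to the extension properties established in Section~2. To check that $\quniv$ is an inner fibration, we must solve lifting problems $\Lambda^n_k \to \Q_{\kappa,\bullet}$, $\Delta^n \to \Q_\kappa$ for $0<k<n$. By the pullback defining $\Q_{\kappa,\bullet}$ and the classifying property of $\U(\kappa)$ recalled above, such a square amounts to the following data: a $\kappa$-small coCartesian fibration $p\colon X \to \Delta^n$, a section of $X$ over $\Lambda^n_k$, and the requirement to extend this to a section over $\Delta^n$. Equivalently, writing $X_{\Lambda^n_k} \to \Lambda^n_k$ for the restriction, we must extend the given map $\Delta^n \to \Lambda^n_k \to X_{\Lambda^n_k} \hookrightarrow X$ (a section) along $\Lambda^n_k \hookrightarrow \Delta^n$ to a section of $p$. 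But a section of $p$ is exactly a morphism of coCartesian fibrations from $(\Delta^n \xrightarrow{=} \Delta^n)$, and $\Lambda^n_k \hookrightarrow \Delta^n$ is a trivial cofibration of the Joyal model structure for $0<k<n$, so Proposition~\ref{innerhornmorphextension} (applied with the terminal coCartesian fibration $\mathrm{id}_{\Delta^n}$ as source) produces the desired extension. Hence $\quniv$ is an inner fibration.

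Next I would produce coCartesian lifts. Given an edge $\Delta^1 \to \Q_\kappa$, i.e.\ a $\kappa$-small coCartesian fibration $q\colon X \to \Delta^1$, together with a point $x \in X_0$ lying over the source vertex $\{0\} \to \Q_\kappa$, I choose a $q$-coCartesian edge $f\colon x \to y$ in $X$ lifting $\Delta^1$; this exists because $q$ is a coCartesian fibration. This edge $f$ is a section of $q$, hence a morphism of coCartesian fibrations $\mathrm{id}_{\Delta^1} \to q$, and by the classifying property it corresponds to an edge $\tilde f\colon \Delta^1 \to \Q_{\kappa,\bullet}$ lifting $\Delta^1 \to \Q_\kappa$ and starting at the chosen point of the fiber. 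The main point is then to verify that $\tilde f$ is $\quniv$-coCartesian. By Lemma~\ref{cocartovereq}, since $\quniv$ is an inner fibration and every edge of $\Delta^1$ maps to an equivalence in $\Q_\kappa$ only when it is degenerate, I instead unwind the coCartesian lifting condition of Definition~\ref{cartedge} directly: a lifting problem $\Delta^{\{0,1\}} \xrightarrow{\tilde f} \Lambda^m_m$, $\Lambda^m_m \to \Q_{\kappa,\bullet}$, $\Delta^m \to \Q_\kappa$ translates, via the classifying property over $\Delta^m$, into extending a partial section of a $\kappa$-small coCartesian fibration $p\colon Z \to \Delta^m$ from $\Lambda^m_m$ to $\Delta^m$, where the edge $\Delta^{\{0,1\}}$ is sent to a $p$-coCartesian edge (namely a pullback of $f$). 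The restriction of $p$ over $\Delta^{\{m-1,m\}}$ need not have the property that Cartesian and coCartesian edges agree, so Theorem~\ref{righthornmorphextension} does not literally apply to the section; instead I use that a coCartesian edge of $p$ in $Z$ is, tautologically, $p$-coCartesian, so the lifting problem is exactly the defining lift in Definition~\ref{cartedge} for the edge $f$ in the coCartesian fibration $Z \to \Delta^m$, which is solvable because $f$ pulls back to a coCartesian edge and coCartesian edges are stable under the pullback $\Delta^m \to \Delta^1$. This gives the required lift, so $\tilde f$ is $\quniv$-coCartesian.

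The step I expect to be the main obstacle is the bookkeeping in the last paragraph: carefully matching the abstract lifting square against $\Q_{\kappa,\bullet} \to \Q_\kappa$ with a concrete extension problem for sections of a coCartesian fibration over $\Delta^m$, and checking that the marked/coCartesian edge data is transported correctly under the classifying equivalence and under pullback along $\Delta^{\{0,1\}} \to \Delta^m$. In particular one must be careful that the hypothesis \emph{``the lift is $p$-coCartesian''} in Definition~\ref{cartedge} is automatically met because coCartesian edges of a coCartesian fibration over $\Delta^m$ restrict to coCartesian edges of its pullback over $\Delta^1$ (stability of coCartesian edges under base change), so no extra work is needed to ensure the lift we produced is itself coCartesian. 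Once this translation is set up cleanly, both conditions of Definition~\ref{cartedge} follow immediately from the material of Section~2, and the proposition is proved.
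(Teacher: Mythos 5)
Your overall plan --- translate every lifting problem against \(\quniv\) whose base is a simplex \(\Delta^m\to\Q_\kappa\) into a section-extension problem for the \(\kappa\)-small coCartesian fibration \(Z\to\Delta^m\) that it classifies --- is exactly the content of the paper's one-line proof (``being a coCartesian fibration can be checked on representables''), so the architecture is right. But two of your justifications do not do the work you assign to them. For the inner-horn case you invoke Proposition \ref{innerhornmorphextension} with \(\mathrm{id}_{\Delta^n}\) as source; that proposition is about maps \emph{preserving coCartesian edges}, and a lift \(\Lambda^n_k\to\Q_{\kappa,\bullet}\) is just an arbitrary section of \(X\) over \(\Lambda^n_k\), with no coCartesianness imposed (for \(\mathrm{id}_{\Delta^n}\) every edge is coCartesian, so ``preserving coCartesian edges'' would force the section to land in coCartesian edges of \(X\), which is not part of the data). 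Fortunately no such tool is needed: extending a section along \(\Lambda^n_k\subset\Delta^n\) is literally an inner-horn lifting problem against the inner fibration \(X\to\Delta^n\), solvable by definition. (The appeal to Lemma \ref{cocartovereq} in your second paragraph is likewise a non sequitur, though harmless since you then unwind the definition directly.)

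The real gap is in the crucial step of the coCartesian-lift case: you must show that the edge \(f\), chosen coCartesian in \(X\cong Z|_{\Delta^{\{0,1\}}}\), is coCartesian for the whole fibration \(Z\to\Delta^m\), and you justify this (twice) by ``stability of coCartesian edges under base change''. That is the converse implication: base-change stability says a \(Z\)-coCartesian edge restricts to a coCartesian edge of the pullback, whereas you need that an edge which is coCartesian in the pullback over \(\Delta^{\{0,1\}}\) --- i.e.\ merely \emph{locally} coCartesian --- is coCartesian in \(Z\). This fails for general inner fibrations; it holds here precisely because \(Z\to\Delta^m\) is itself a coCartesian fibration, by the local-to-global comparison of coCartesian edges (the coCartesian analogue of Lemma \ref{localtoglobal}): choose a genuinely \(Z\)-coCartesian edge \(f'\) with the same source over the same base edge; both \(f\) and \(f'\) are coCartesian in the pullback over \(\Delta^{\{0,1\}}\), hence are related by an equivalence in the fiber, and then Lemma \ref{twothreecocartesian} (or invariance of coCartesianness under equivalence) shows \(f\) is \(Z\)-coCartesian. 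This local-to-global step is exactly the nontrivial content hidden in the phrase ``can be checked on representables'', so it is the one point that cannot be waved away by pullback stability. With that substitution, and the trivial fix in the inner-horn case, your argument is correct and coincides with the paper's intended proof.
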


\begin{proof}
Follows immediately by definition and the fact that being a coCartesian fibration can be checked on representables.
\end{proof}

\begin{remark}\label{remark:dropsize}
In particular, the map \(\quniv\) is an isofibration
(i.e. a fibration of the Joyal model structure). This means that,
in Corollary \ref{eqextprop}, we can drop the assumption that the codomain of \(i\)
is \(\kappa\)-small and still see that we can produce a \(\kappa\)-small coCartesian
fibration \(q\) whenever \(p\) is \(\kappa\)-small: indeed this slightly more
general form of Corollary \ref{eqextprop} simply means that the
map \(\quniv\colon \Q_{\kappa,\bullet} \to \Q_\kappa\) is an isofibration,
which can be checked by restricting
to lifting problem against trivial cofibrations between countable simplicial sets.
\end{remark}

\begin{sub}
We refer to \(\quniv\) as the universal coCartesian fibration. The main Theorem of this section is that the simplicial set \(\Q\) is fibrant.
\end{sub}

\begin{theorem}\label{universefibrancy}
The simplicial set \(\Q\) is an \(\infty\)-category.
\end{theorem}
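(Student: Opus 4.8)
The plan is to verify that $\Q$ is an $\infty$-category by checking the inner horn filling condition directly, using the extension properties developed in the previous section. Given a lifting problem $\Lambda^n_k \to \Q$ with $0 < k < n$, the classifying property of $\quniv$ translates the map $\Lambda^n_k \to \Q$ into a $\kappa$-small coCartesian fibration $X \to \Lambda^n_k$ together with the marking of its coCartesian edges; filling the horn amounts to extending this to a $\kappa$-small coCartesian fibration over $\Delta^n$ restricting to $X$ over the horn (and then invoking the pullback-extension property of $\U(\kappa)$ recalled just before Definition \ref{def:univcoCart} to produce the actual lift into $\Q$). So the real content is an extension statement for coCartesian fibrations along inner horn inclusions $\Lambda^n_k \to \Delta^n$.

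First I would handle this extension. Since $\Lambda^n_k \hookrightarrow \Delta^n$ is a trivial cofibration of the Joyal model structure when $0 < k < n$, Corollary \ref{eqextprop} applies essentially verbatim (in the stronger form noted in Remark \ref{remark:dropsize}, which removes the countability/size hypothesis on the target): it produces a $\kappa$-small coCartesian fibration $Y \to \Delta^n$ together with a pullback square over $\Lambda^n_k \hookrightarrow \Delta^n$ whose left edge is $X \to \Lambda^n_k$. One must be slightly careful that the marking is correctly matched — i.e. that the coCartesian edges of $X$ really are those induced from $Y$ — but this is exactly what the marked-simplicial-set formulation of the extension theorem (Theorem \ref{extension}) guarantees, since the pullback there is taken with the natural markings $(\,\cdot\,)^\natural$.

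Then I would feed the data back into $\U(\kappa)$. The coCartesian fibration $Y\to\Delta^n$ is in particular $\kappa$-small, so it is classified by a map $\Delta^n \to \U(\kappa)$; because $Y$ is a coCartesian fibration and the markings agree, this map factors through $\Q_\kappa \subset \U(\kappa)$, and by the pullback-extension property of $\U(\kappa)$ one may arrange that this classifying map restricts over $\Lambda^n_k$ to the originally given map $\Lambda^n_k \to \Q$. This provides the desired filler $\Delta^n \to \Q$, completing the argument.

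The main obstacle I anticipate is not the horn-filling bookkeeping but making sure the hypotheses of Corollary \ref{eqextprop}/Theorem \ref{extension} are genuinely met and that the size control is preserved: one needs the inner horn inclusion to be a Joyal trivial cofibration (standard), one needs the companion result \cite{cocartesiankim} that the unit map $X^\natural \to i^*(Y')^\natural$ is a coCartesian equivalence (so that Theorem \ref{extension} can be invoked), and one needs to know that the small object argument can be run so as to keep $Y \to \Delta^n$ within the fixed cardinal $\kappa$ — this last point is where the $\kappa$-smallness clause in the statement of Theorem \ref{extension} does the work. Everything else is a translation between the tautological description of $\Q_\kappa$ and the extension machinery, and should be routine.
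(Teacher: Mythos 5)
Your proof is correct and follows the paper's own argument: translate an inner horn $\Lambda^n_k\to\Q$ into a $\kappa$-small coCartesian fibration over $\Lambda^n_k$, extend it over $\Delta^n$ via Corollary \ref{eqextprop} (inner horn inclusions being Joyal trivial cofibrations), and use the strict pullback-extension property built into $\U(\kappa)$ to obtain the filler. The only cosmetic difference is that you invoke Remark \ref{remark:dropsize}, which is unnecessary here since $\Delta^n$ is itself $\kappa$-small.
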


\begin{proof}
We want to solve the lifting property
\[
\begin{tikzcd}
\Lambda^n_k\rar \dar & \Q\\
\Delta^n \urar[dashed] &
\end{tikzcd}
\]
for \(n\geq 2\) and \(0<k<n\). This amounts to showing the following extension property: For any small coCartesian fibration \(p\colon X\to \Lambda^n_k\) we can complete the square
\[
\begin{tikzcd}
X\rar[dashed] \dar[swap] & Y\dar[dashed]{q}\\
\innerhorn \rar & \nsimplex
\end{tikzcd}
\]
such that \(q\) is a small coCartesian fibration and the square is a pullback. This is precisely the content of Corollary \ref{eqextprop}.
\end{proof}

\begin{sub}
Let \(\spaces\subset\Q\) be the subobject on left fibrations and consider its maximal \(\infty\)-groupoid \(k(\spaces)\subset \spaces\). Pulling back the universal coCartesian fibration we obtain the diagram of pullbacks
\[
\begin{tikzcd}
k(\spaces_\bullet)\rar\dar[swap]{k(\puniv)} & \spaces_\bullet\dar{\puniv}\rar & \Q_\bullet\dar{\quniv}\dar\\
k(\spaces) \rar & \spaces \rar & \Q
\end{tikzcd}
\]
The map \(\puniv\) is the universal left fibration and the first author shows that its codomain is the \(\infty\)-category of spaces \cite[Theorem 7.8.9]{cisinskibook}. The map \(k(\puniv)\) is the universal Kan fibration constructed in \cite{klvsimplicial}.
\end{sub}

\begin{sub}
We next characterize coCartesian fibrations whose classifying map factorizes over the maximal \(\infty\)-groupoid of \(\Q\).
\end{sub}

\begin{proposition}\label{groupoidclass}
Let \(p\colon X\to A\) be a small coCartesian fibration. Then its classifying map factorizes over the maximal \(\infty\)-groupoid \(k(\Q)\) if and only if \(p\) is also a Cartesian fibration
and any edge of $X$ is coCartesian if and only if it is cartesian.
\end{proposition}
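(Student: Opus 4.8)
The plan is to recognize that the classifying map $F\colon A\to\Q$ factors through $k(\Q)$ exactly when $F$ sends every edge of $A$ to an equivalence in $\Q$, since $k(\Q)$ is by definition the largest subobject of $\Q$ that is an $\infty$-groupoid, i.e.\ the union of all simplices whose edges are equivalences. So the statement reduces to: an edge $\Delta^1\to A$ is sent to an equivalence in $\Q$ if and only if the restricted coCartesian fibration $X_{\Delta^1}\to\Delta^1$ has the stated property (it is also Cartesian and its coCartesian edges coincide with its Cartesian edges). Pulling back along $\Delta^1\to A$, we may therefore assume $A=\Delta^1$ and must show: the classifying map $\Delta^1\to\Q$ of a small coCartesian fibration $p\colon X\to\Delta^1$ factors through $k(\Q)$ — equivalently, $\Delta^1\to\Q$ is an equivalence, equivalently it lifts to $J$ — if and only if $p$ is also a Cartesian fibration with coCartesian $=$ Cartesian edges. (For the reduction from $A$ to its edges one also needs that ``$p$ is Cartesian with coCartesian $=$ Cartesian'' is a property that can be checked on the pullbacks over all edges $\Delta^1\to A$; the coCartesian edges of $X$ already lie over equivalences composably, so checking over edges and over $2$-simplices of $A$ suffices, and the $2$-simplex condition is automatic once it holds over all edges.)

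For the implication that factoring through $k(\Q)$ forces the property: if the classifying map of $p\colon X\to\Delta^1$ lifts along $\Delta^1\hookrightarrow J$ to a map $J\to\Q$, then pulling back $\quniv$ gives a coCartesian fibration $\widetilde X\to J$ restricting to $p$ over $\Delta^1$. Since $J$ is an $\infty$-groupoid, every edge of $J$ — in particular the ``backwards'' edge $\Delta^{\{1,0\}}\to J$ — is an equivalence; by Lemma \ref{cocartovereq} (applied with base $J$, whose every edge is an equivalence), an edge of $\widetilde X$ is coCartesian if and only if it is an equivalence, and dually for Cartesian. Restricting to the fibration $\widetilde X_{\Delta^{\{1,0\}}}\to\Delta^{\{1,0\}}$ and using that $\Delta^{\{1,0\}}\cong\Delta^1$ with reversed orientation, a Cartesian edge of $p$ over $\Delta^{\{0,1\}}$ corresponds to a coCartesian edge of this reversed fibration; chasing the identifications shows $p$ is a Cartesian fibration and that its Cartesian edges are exactly its coCartesian edges (both classes being the equivalences of $X$, since $X\to\Delta^1$ is an isofibration over an $\infty$-groupoid-like base).

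For the converse, suppose $p\colon X\to\Delta^1$ is simultaneously a coCartesian and Cartesian fibration with the two classes of edges agreeing. The goal is to extend $p$ to a coCartesian fibration over $J$, i.e.\ to solve the extension problem along the inner-and-outer-horn-building inclusion $\Delta^1\hookrightarrow J$. Since $\Delta^1\to J$ is a trivial cofibration of the Joyal model structure, Corollary \ref{eqextprop} immediately produces a coCartesian fibration $q\colon Y\to J$ with $Y_{\Delta^1}\cong X$ over $\Delta^1$; the classifying map of $q$ is then a map $J\to\Q$ restricting to the classifying map of $p$, which is exactly what it means for the latter to factor through $k(\Q)$. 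The one thing to be careful about is that the hypothesis ``coCartesian $=$ Cartesian'' over $\Delta^1$ is what is needed if instead one builds the extension $J$ simplex by simplex using Theorems \ref{lefthornmorphextension} and \ref{righthornmorphextension}, whose running hypothesis is precisely that coCartesian edges over an extremal edge coincide with Cartesian ones; but with Corollary \ref{eqextprop} in hand this is subsumed, since any coCartesian fibration over $J$ automatically has this property by Lemma \ref{cocartovereq}.

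The main obstacle is the bookkeeping in the reduction to $A=\Delta^1$ together with the orientation-reversal identifications: one must carefully match ``Cartesian edge of $p$'' with ``coCartesian edge of the fibration pulled back along the reverse edge'' and confirm that, over a base all of whose edges are equivalences, both notions collapse to ``equivalence in the total space'' — this is where Lemmas \ref{cocartovereq} and \ref{localtoglobal} do the real work, and where a naive argument could easily conflate the two orientations.
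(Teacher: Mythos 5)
Your easy direction and the reduction to edges of $A$ are essentially sound (the paper does the easy direction even more directly, by observing that the property is stable under pullback and holds for any coCartesian fibration over an $\infty$-groupoid by Lemma \ref{cocartovereq}; your appeal to Lemma \ref{localtoglobal} to make the property edgewise-checkable is legitimate). But the converse direction has a fatal gap: the inclusion $\Delta^1\hookrightarrow J$ is \emph{not} a trivial cofibration of the Joyal model structure. The category $[1]$ is not equivalent to the free-walking isomorphism (the latter is equivalent to the point), so Corollary \ref{eqextprop} simply does not apply, and there is no way to ``subsume'' the hypothesis. A sanity check makes the problem vivid: your argument for the converse never uses the assumption that coCartesian edges coincide with Cartesian ones, so if it worked it would show that \emph{every} small coCartesian fibration over $\Delta^1$ extends over $J$, i.e.\ that every edge of $\Q$ is an equivalence and $\Q$ is an $\infty$-groupoid --- which is false (e.g.\ the fibration classifying the endpoint inclusion $\Delta^0\to\Delta^1$ is coCartesian but not Cartesian). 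The ``simplex by simplex'' fallback you mention is also not available: Theorems \ref{lefthornmorphextension} and \ref{righthornmorphextension} extend \emph{morphisms} between two already-given coCartesian fibrations over $\Delta^n$ along horn inclusions of the base; they do not extend the fibration itself over new simplices of the base.

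The hypothesis is exactly what makes the extension over $J$ possible, and the paper uses it in the marked category: the assumption says precisely that $X^\natural\to(\Delta^1)^\sharp$ is both a marked left and a marked right fibration. One factors the composite $X^\natural\to(\Delta^1)^\sharp\to J^\sharp$ as a marked left anodyne map $j$ followed by a marked left fibration $q\colon Y^\natural\to J^\sharp$ (which is automatically also a marked right fibration since $J$ is a groupoid), and then shows that the induced map $X^\natural\to Y^\natural_{\Delta^1}$ over $(\Delta^1)^\sharp$ is a coCartesian equivalence by checking it on fibers, using Lemma \ref{smooth} applied to the cellular marked left/right anodyne inclusions $\{0\},\{1\}\to(\Delta^1)^\sharp, J^\sharp$ together with the two-sided fibration property of $p$ and $q$. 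Theorem \ref{extension} then upgrades this to an honest pullback square exhibiting $X$ as the restriction to $\Delta^1$ of a coCartesian fibration over $J$. Finally, note one more point you glossed over even granting an extension: to conclude that \emph{the given} classifying map of $p$ lands in $k(\Q)$ (rather than merely that some classifying map does), one uses the extension property built into the universe $\U(\kappa)$, which allows the chosen classifying map over $\Delta^1$ to be extended along the monomorphism $\Delta^1\to J$ to a classifying map of the extended fibration; its image then consists of equivalences of $\Q$.
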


\begin{proof}
Suppose the classifying map factorizes over \(k(\Q)\). Note that the property of being a coCartesian and cartesian fibration with coCartesian edges precisely the cartesian edges and vice versa is stable under pullback. Also note that a coCartesian fibration over an \(\infty\)-groupoid satisfies this property by Lemma \ref{cocartovereq}. The assertion then follows since \(p\) is a pullback of a coCartesian fibration over an \(\infty\)-groupoid.

For the converse we first assume that \(A=\Delta^1\). We consider the map of marked simplicial sets \(X^\natural \to \mcyl\). By assumption, this is a marked left as well as a marked right fibration. Let \(J\) be the nerve of the free walking isomorphism. There exists a square
\[
\begin{tikzcd}
X^\natural \rar{j} \dar[swap]{p} & Y^\natural\dar{q}\\
\mcyl \rar & J^\sharp
\end{tikzcd}
\]
in which \(j\) is marked left anodyne and \(q\) is a marked left fibration. Since \(J\) is a groupoid \(q\) is also a marked right fibration. We claim that the induced map 
\[
\begin{tikzcd}[column sep=small]
X^\natural \ar{rr}\drar & & Y^\natural_{\Delta^1}\dlar\\
& \mcyl &
\end{tikzcd}
\]
is a coCartesian equivalence. In this case it is enough to show that it is a fiberwise equivalence. But since the inclusion of \(0\) into \(\mcyl\) as well as \(J^\sharp\) is cellular marked left anodyne and \(p\) and \(q\) are marked right anodyne, it follows from Lemma \ref{smooth} that the fibers of \(p\) and \(q\) over \(\{0\}\) are equivalent to \(X^\natural\) and \(Y^\natural\) respectively in the coCartesian model structure over the point. Since \(j\) is also a coCartesian equivalence over the point, the fibers over \(\{0\}\) are equivalent. The argument for the fibers over \(\{1\}\) is analogous using the fact that the inclusion of \(\{1\}\) into \(\mcyl\) and \(J^\sharp\) is cellular marked right anodyne.

Now by Theorem \ref{extension} we find an actual pullback square
\[
\begin{tikzcd}
X^\natural \rar{j} \dar[swap]{p} & Z^\natural\dar{r}\\
\mcyl \rar & J^\sharp
\end{tikzcd}
\]
thus the classifying map of \(p\) factorizes over the \(\infty\)-groupoid \(J^\sharp\) and has hence image in \(k(\Q)\). 

For general \(A\) we need to show that each edge in \(A\) is sent to an equivalence in \(\Q\). Let \(f\colon \Delta^1\to A\) be an edge, then by the above argument we obtain a commutative square
\[
\begin{tikzcd}
\Delta^1\dar[swap]{f} \rar & J\dar\\
A\rar & \Q
\end{tikzcd}
\]
Thus the edge maps to an equivalence in \(\Q\).
\end{proof}


\section{The universal morphism classifier}

\begin{sub}
The category of marked simplicial sets is locally cartesian closed. Given a marked simplicial set $A^+$ and two maps $p\colon X^+\to A^+$ and $q\colon Y^+\to A^+$, we denote the internal hom by 
\[
\pi_{X,Y}\colon\underline{\Hom}^+_{A^+}(X^+,Y^+)\to A^+.
\]
A map 
\[
K^+\to \mathrm{Hom}_{A^+}^+(X^+,Y^+)
\]
corresponds to the datum of a map $f\colon K^+ \to A^+$ and a map
\[
f^\ast X^+\to f^\ast Y^+
\]
over $K^+$. We denote by \(\underline{\mathrm{Eq}}^+_{A^+}(X^+,Y^+)\subset \Hom^+_{A^+}(X^+,Y^+)\) the subobject spanned by the coCartesian equivalences.
\end{sub}

\begin{definition}
We denote by $\Hom^\sharp_{A^+}(X^+,Y^+)$ and \(\underline{\mathrm{Eq}}^\sharp_{A^+}(X^+,Y^+)\) the simplicial sets spanned by the marked edges of $\underline{\Hom}^+_{A^+}(X^+,Y^+)$ and \(\underline{\mathrm{Eq}}^+_{A^+}(X^+,Y^+)\) respectively.
\end{definition}

\begin{sub}
In other words, a map of simplicial sets $K\to \Hom^\sharp_{A^+}(X^+,Y^+)$ corresponds by adjunction to a map $K^\sharp \to\Hom^+_{A^+}(X^+,Y^+)$. This is equivalent to specifying a map $f\colon K^\sharp\to A^+$ and a map $f^\ast X^+ \to f^\ast Y^+$ of marked simplicial sets over $K^\sharp$. In case $X$ and $Y$ are coCartesian fibrations, this amounts to specifying a map $K\to A$ and a map $f^\ast X \to f^\ast Y$ over $K$ which preserves coCartesian edges. We have a factorization over \(\underline{\mathrm{Eq}}^\sharp_{A^+}(X^+,Y^+)\) if and only if the the map over \(K\) is a coCartesian equivalence.
\end{sub}

\begin{proposition}\label{internalhomfibration}
Let \(p\colon X^\natural \to A^\sharp\) and \(q\colon Y^\natural \to A^\sharp\) be coCartesian fibrations. Then the map
\[
\begin{tikzcd}
\underline{\Hom}^\sharp_{A^\sharp}(X^\natural ,Y^\natural)\dar\\
A
\end{tikzcd}
\]
is a Joyal fibration.
\end{proposition}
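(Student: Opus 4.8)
The plan is to verify directly the defining property of a fibration of the Joyal model structure, namely the right lifting property with respect to every trivial cofibration of that model structure. This works uniformly over all such maps at once, because the extension result to be invoked, Proposition~\ref{innerhornmorphextension}, is itself stated for an arbitrary trivial cofibration of the Joyal model structure; in particular it will follow that $\underline{\Hom}^\sharp_{A^\sharp}(X^\natural,Y^\natural)\to A$ is an inner fibration, and that it is an isofibration whenever $A$ is an $\infty$-category.

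Concretely, I would fix a trivial cofibration $j\colon K\to L$ of the Joyal model structure and a commutative square
\[
\begin{tikzcd}
K\rar{a}\dar[swap]{j} & \underline{\Hom}^\sharp_{A^\sharp}(X^\natural,Y^\natural)\dar{\pi_{X,Y}}\\
L\rar{b} & A
\end{tikzcd}
\]
and then unwind it using the description of maps into $\underline{\Hom}^\sharp_{A^\sharp}(X^\natural,Y^\natural)$ recalled before the statement. The map $b$ yields by base change a pair of coCartesian fibrations $b^\ast X\to L$ and $b^\ast Y\to L$ (base changes of coCartesian fibrations are coCartesian fibrations), and the data of the square amount exactly to a morphism $(b^\ast X)_{K}\to (b^\ast Y)_{K}$ over $K$ preserving coCartesian edges. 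A diagonal filler of the square — a map $L\to \underline{\Hom}^\sharp_{A^\sharp}(X^\natural,Y^\natural)$ over $b$ whose restriction along $j$ is $a$ — is, under the same dictionary, precisely an extension of this morphism to a morphism $b^\ast X\to b^\ast Y$ over $L$ preserving coCartesian edges. Such an extension is produced by Proposition~\ref{innerhornmorphextension}, which concludes the proof.

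The step I expect to require the most care when writing this out is the translation in the previous paragraph; everything else is a formal manipulation of the internal-hom adjunction. Precisely, one must check that for a coCartesian fibration $Z\to A$ and an arbitrary map $b\colon L\to A$ the marked simplicial set obtained by pulling back $Z^\natural\to A^\sharp$ along $L^\sharp\to A^\sharp$ coincides with $(b^\ast Z)^\natural$, i.e.\ that the edges of $b^\ast Z$ lying over coCartesian edges of $Z$ are exactly the coCartesian edges of $b^\ast Z\to L$. One inclusion is the stability of coCartesian edges under base change. For the other, I would take a coCartesian edge of $b^\ast Z\to L$ over an edge $\beta$ of $L$ with a prescribed source, compare it — using uniqueness of coCartesian lifts and the fact that the fibres of $b^\ast Z\to L$ are literally fibres of $Z\to A$, so that the relevant equivalences agree — with the base change of the coCartesian pushforward computed in $Z$, and conclude by the stability of coCartesian edges under composition with equivalences (Lemmas~\ref{cocartovereq} and~\ref{twothreecocartesian}). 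This identification is already used tacitly in the discussion preceding the statement; once it is recorded, the argument above is complete.
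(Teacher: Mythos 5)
Your proposal is correct and follows essentially the same route as the paper: unwind the lifting problem against a Joyal trivial cofibration $K\to L$ into an extension problem for a coCartesian-edge-preserving map over $K$ between the pullbacks over $L$, and solve it by Proposition~\ref{innerhornmorphextension}. The extra verification you flag — that the pullback of $Z^\natural\to A^\sharp$ along $L^\sharp\to A^\sharp$ is $(b^\ast Z)^\natural$ — is a correct point of care, and follows immediately from the fact that marked left fibrations over a sharp base are stable under pullback and are exactly coCartesian fibrations with precisely the coCartesian edges marked.
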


\begin{proof}
Let \(K\to L\) be a Joyal trivial cofibration. The lifting problem
\[
\begin{tikzcd}
K\rar \dar & \underline{\Hom}^\sharp_{A^\sharp}(X^\natural ,Y^\natural)\dar\\
L\rar \urar[dashed] & A
\end{tikzcd}
\]
corresponds to the extension problem
\[
\begin{tikzcd}[row sep=small,column sep=small]
X_{K}^\natural\drar \ar{rr} \ar{ddr} & & Y_{K}^\natural\drar\ar{ddl} & \\
{} & X^\natural_L\ar[dashed]{rr}\ar{ddr} & & Y^\natural_L\ar{ddl}\\
{} & K^\sharp\drar & & \\
{} & {} & L^\sharp & 
\end{tikzcd}
\]
which admits a solution by Proposition \ref{innerhornmorphextension}.
\end{proof}

\begin{sub}
Let \(p\colon E\to B\) be a coCartesian fibration. We obtain two coCartesian fibrations
\[
\begin{tikzcd}
E^0\drar & & E^1\dlar\\
& B\times B &
\end{tikzcd}
\]
by pulling back along the projections \(B\times B\to B\).
\end{sub}

\begin{definition}\label{def:Homclassify}
We denote
\begin{itemize}
\item \(\underline{\Hom}_{B\times B}(E^0,E^1) := \Hom^\sharp_{(B\times B)^\sharp}((E^0)^\natural,(E^1)^\natural )\)
\item \(\underline{\mathrm{Eq}}_{B\times B}(E^0,E^1)
:= \mathrm{Eq}^\sharp_{(B\times B)^\sharp}((E^0)^\natural,(E^1)^\natural )\)
\end{itemize}
\end{definition}

\begin{sub}
Recall that a map \(X\to A\times B \) is called a bifibration if it satisfies the following two conditions:
\begin{itemize}
\item The lifting problem
\[
\begin{tikzcd}
\Lambda^n_0\rar \dar & X\dar\\
\Delta^n\rar\urar[dashed] & A\times B
\end{tikzcd}
\]
admits a solution whenever the edge \(\Delta^{\{0,1\}}\) is mapped to a degenerate edge in \(A\).
\item The lifting problem
\[
\begin{tikzcd}
\Lambda^n_n\rar \dar & X\dar\\
\Delta^n\rar\urar[dashed] & A\times B
\end{tikzcd}
\]
admits a solution whenever the edge \(\Delta^{\{n-1,n\}}\) is mapped to a degenerate edge in \(B\).
\end{itemize}
\end{sub}

\begin{theorem}\label{morphclassbifib}
The map 
\[
\begin{tikzcd}
\underline{\Hom}_{B\times B}(E^0,E^1)\dar\\
E\times E 
\end{tikzcd}
\]
is a bifibration.
\end{theorem}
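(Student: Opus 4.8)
The plan is to verify directly the two lifting conditions defining a bifibration, translating each into an extension problem for morphisms of coCartesian fibrations along an outer horn inclusion and then invoking Theorem~\ref{lefthornmorphextension} or Theorem~\ref{righthornmorphextension}. The degeneracy assumption attached to each of the two conditions will be exactly what is needed to check the hypothesis of these theorems that over the special edge coCartesian and Cartesian edges agree.

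Concretely, consider for the first condition a lifting problem
\[
\begin{tikzcd}
\Lambda^n_0\rar \dar & \underline{\Hom}_{B\times B}(E^0,E^1)\dar\\
\Delta^n\rar \urar[dashed] & E\times E
\end{tikzcd}
\]
whose lower edge carries $\Delta^{\{0,1\}}$ to an edge that is degenerate in the first copy of $E$. Using Definition~\ref{def:Homclassify}, the defining adjunction of the relative internal hom, and the construction of the map to $E\times E$, such a lifting problem unwinds into the problem of extending, along $\Lambda^n_0\hookrightarrow\Delta^n$, a morphism of coCartesian fibrations over $\Delta^n$ preserving coCartesian edges, between suitable pullbacks of $E^0$ and $E^1$ along the induced map $\Delta^n\to B\times B$, compatibly with the sections that a map into $E\times E$ records. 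Since the given edge is degenerate in the first copy of $E$, it is degenerate already in the first copy of $B$, so the pullback over $\Delta^{\{0,1\}}$ of the pertinent coCartesian fibration is a constant one, of the form $F\times\Delta^1\to\Delta^1$. For such a product an edge is coCartesian precisely when its component in $F$ is invertible --- this follows from Lemma~\ref{cocartovereq} over the degenerate edges and is elementary over the nondegenerate one --- and the dual statement yields the same description of the Cartesian edges; hence coCartesian and Cartesian edges coincide over $\Delta^{\{0,1\}}$. This is the hypothesis of Theorem~\ref{lefthornmorphextension}, which then supplies the required lift.

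The second condition is established symmetrically: a lifting problem against $\Lambda^n_n\hookrightarrow\Delta^n$ whose lower edge carries $\Delta^{\{n-1,n\}}$ to an edge degenerate in the \emph{second} copy of $E$ unwinds, in the same way, into an extension problem along $\Lambda^n_n\hookrightarrow\Delta^n$, and here it is the pullback over $\Delta^{\{n-1,n\}}$ of the second coCartesian fibration that is a product over $\Delta^1$ and hence has coinciding coCartesian and Cartesian edges, so that Theorem~\ref{righthornmorphextension} applies.

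I expect the main obstacle to be the unwinding step itself. One must describe the map $\underline{\Hom}_{B\times B}(E^0,E^1)\to E\times E$ precisely enough that the degeneracy hypothesis on the appropriate copy of $E$ becomes the hypothesis of Theorem~\ref{lefthornmorphextension} or Theorem~\ref{righthornmorphextension} for exactly the coCartesian fibration (or fibrations) for which it is needed, and one must carry the section data attached to a map into $E\times E$, rather than into $B\times B$, through the whole argument. This extra data should cause no real difficulty --- a section is transported along the morphism of coCartesian fibrations being extended, so the solution remains compatible with the prescribed sections --- but it has to be tracked carefully.
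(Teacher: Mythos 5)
Your proof is essentially the paper's: each lifting condition is unwound, via the adjunction describing maps into the relative internal hom, into an outer-horn extension problem for morphisms of coCartesian fibrations over $\Delta^n$, the degeneracy hypothesis makes the restriction over the distinguished edge a pullback from a point, so that coCartesian and Cartesian edges coincide there (the paper gets this from Proposition \ref{groupoidclass} and Lemma \ref{cocartovereq}; your direct check for $F\times\Delta^1\to\Delta^1$ is the same content), and Theorems \ref{lefthornmorphextension} and \ref{righthornmorphextension} finish the argument. The one correction: the map in question is the structural projection of $\underline{\Hom}_{B\times B}(E^0,E^1)$ to its base $B\times B$ (the ``$E\times E$'' in the statement is a typo --- this projection is what is used later, e.g.\ the map $(s,t)$ to $\Q\times\Q$ defining the universal morphism classifier), so there is no map to $E\times E$ at all and no ``section data'' to track; the obstacle you flag at the end simply does not arise, and with target $B\times B$ your unwinding coincides with the paper's proof.
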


\begin{proof}
Consider first the lifting problem 
\[
\begin{tikzcd}
\Lambda^n_0\rar \dar[swap]{i} & \underline{\Hom}_{B\times B}(E^0,E^1)\dar\\
\Delta^n\rar{(f,g)}\urar[dashed] & B\times B
\end{tikzcd}
\]
and assume that the edge \(\Delta^{\{0,1\}}\) is degenerate in the first component of \(B\times B\). This means that when projecting down to the first component we have a commutative square
\[
\begin{tikzcd}
\Delta^{\{0,1\}}\rar \dar & \Delta^0\dar\\
\Delta^n \rar{f} & B
\end{tikzcd}
\]
Consider the coCartesian fibration \(f^*E\to \Delta^n\) given by pulling back \(p\) along \(f\). The commutativity of the square and Proposition \ref{groupoidclass} imply that the restriction
\[
\begin{tikzcd}
f^*E_{\Delta^{\{0,1\}}}\dar\\
\Delta^{\{0,1\}}
\end{tikzcd}
\]
is a coCartesian as well as a cartesian fibration with coCartesian edges precisely the cartesian edges and vice versa. The lifting problem now corresponds to the extension problem
\[
\begin{tikzcd}[row sep=small,column sep=small]
f^*E_{\Lambda^n_0}\drar \ar{rr} \ar{ddr} & & g^*E_{\Lambda^n_0}\drar\ar{ddl} & \\
{} & f^*E\ar[dashed]{rr}\ar{ddr} & & g^*E\ar{ddl}\\
{} & \Lambda^n_0\drar & & \\
{} & {} & \nsimplex & 
\end{tikzcd}
\]
where the extension needs to preserve coCartesian edges. The existence of such an extension follows directly from Theorem \ref{lefthornmorphextension}. The lifting problem
\[
\begin{tikzcd}
\Lambda^n_n\rar \dar & \underline{\Hom}_{B\times B}(E^0,E^1)\dar\\
\Delta^n\rar\urar[dashed] & B\times B
\end{tikzcd}
\]
is solved analogously using Theorem \ref{righthornmorphextension}.
\end{proof}

\begin{sub}
We apply Definition \ref{def:Homclassify}
to the universal coCartesian fibration
\(\quniv\colon \Q_\bullet \to \Q\) and obtain the isofibration
\[
\begin{tikzcd}
\morphclass\dar{(s,t)}\\
\Q\times \Q 
\end{tikzcd}
\]
We refer to the map \(\morphclass\to \Q\times \Q\) as the universal morphism classifier and the map \(\eqclass \to \Q\times \Q\) as the universal equivalence classifier. Indeed a map \(A\to \morphclass\) corresponds to specifying a map of coCartesian fibrations
\[
\begin{tikzcd}
X\ar{rr}\drar & & Y\dlar\\
& A &
\end{tikzcd}
\]
preserving coCartesian edges. Similarly, a map to \(\eqclass\) corresponds to specifying a coCartesian equivalence between coCartesian fibrations. 
\end{sub}

\begin{corollary}\label{bifibration}
The universal morphism classifier is a bifibration.
\end{corollary}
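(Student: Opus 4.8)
The plan is to exhibit the universal morphism classifier as a direct instance of the construction appearing in Theorem~\ref{morphclassbifib}, so that the corollary becomes a matter of matching notation. First I would recall that the universal coCartesian fibration $\quniv\colon\Q_{\bullet}\to\Q$ is itself a coCartesian fibration, as noted just after Definition~\ref{def:univcoCart}. Taking $p=\quniv$ and $B=\Q$ in the setup preceding Definition~\ref{def:Homclassify}, pulling $\quniv$ back along the two projections $\Q\times\Q\to\Q$ produces the two coCartesian fibrations $(\Q_{\bullet})^0$ and $(\Q_{\bullet})^1$ over $\Q\times\Q$, and Definition~\ref{def:Homclassify} then identifies $\underline{\Hom}_{\Q\times\Q}\big((\Q_{\bullet})^0,(\Q_{\bullet})^1\big)$ with the object $\morphclass$, its structure map $(s,t)$ to $\Q\times\Q$ being precisely the universal morphism classifier.

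With this identification, the corollary is immediate: Theorem~\ref{morphclassbifib}, read with $E=\Q_{\bullet}$ and $B=\Q$, says exactly that the map $\morphclass\to\Q\times\Q$ is a bifibration. The only thing that needs checking is that the two projections $\Q\times\Q\to\Q$ used to form $(\Q_{\bullet})^0$ and $(\Q_{\bullet})^1$ here coincide with the ones in the hypothesis of Theorem~\ref{morphclassbifib}, which holds by construction. Hence I do not anticipate any genuine difficulty; all the real content already lives in Theorem~\ref{morphclassbifib}, and behind it in the outer-horn extension results Theorems~\ref{lefthornmorphextension} and~\ref{righthornmorphextension} together with the characterization of Proposition~\ref{groupoidclass}.

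If one wants a little more, it is worth recording alongside the statement that $\morphclass\to\Q\times\Q$ is in addition an isofibration: this follows from Proposition~\ref{internalhomfibration} applied to the coCartesian fibrations $(\Q_{\bullet})^0$ and $(\Q_{\bullet})^1$ over $\Q\times\Q$. Concretely, a simplex of $\morphclass$ amounts to a map of small coCartesian fibrations preserving coCartesian edges, so the bifibration property is precisely the statement that such maps extend along the two families of outer horns under the evident degeneracy hypotheses on the base --- which is what makes $\morphclass$ behave like the ``universal arrow'' relating $s$ (a coCartesian-type direction) and $t$ (a Cartesian-type direction). None of this is needed for the proof of the corollary itself, which is the one-line appeal to Theorem~\ref{morphclassbifib} above.
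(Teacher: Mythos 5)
Your proof is correct and coincides with the paper's (implicit) argument: the corollary is exactly Theorem~\ref{morphclassbifib} specialized to $E=\Q_{\bullet}$, $B=\Q$, since $\morphclass\to\Q\times\Q$ is by definition $\underline{\Hom}_{\Q\times\Q}\bigl((\Q_{\bullet})^0,(\Q_{\bullet})^1\bigr)\to\Q\times\Q$. The extra remark on the isofibration property via Proposition~\ref{internalhomfibration} matches what the paper records just before the corollary and is not needed for the statement itself.
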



\section{The Grothendieck construction on homotopy categories}

\begin{sub}
The goal of this section is to define the Grothendieck construction on homotopy categories. To this end consider a map of marked simplicial sets
\[
\begin{tikzcd}
W^+\dar{q}\\
A^+\times B^+
\end{tikzcd}
\]
Suppose we have a map \(p\colon X^+\to A^+\), then we define the marked simplicial set \(\Map_{A^+}^{B^+}(X^+,W^+)\) as the pullback
\[
\begin{tikzcd}
\Map_{A^+}^{B^+}(X^+,W^+)\rar \dar & \underline \Hom^+(X^+,W^+)\dar{q_\ast}\\
B^+\rar & \underline{\Hom}^+(X^+,A^+\times B^+)
\end{tikzcd}
\]
Here the bottom map is induced by the adjoint of 
\[
X^+\times B^+\xrightarrow{(p,id)} A^+\times B^+
\]
If \(q\) is a marked left fibration, then 
\[
\begin{tikzcd}
\Map_{A^+}^{B^+}(X^+,W^+)\dar\\
B^+
\end{tikzcd}
\]
is also a marked left fibration by \cite[Proposition 3.1.2.3]{lurie}, see also \cite[Lemma 4.33]{contrakim} for alternative proof.
\end{sub}

\begin{sub}
When \(B^+ =\Delta^0\), then \(\Map_{A^+}^{\Delta^0}(X^+,W^+)\) is simply the marked simplicial set of commuting triangles
\[
\begin{tikzcd}
X^+\ar{rr}\drar[swap]{p} & & W^+\dlar{q}\\
& A^+ &
\end{tikzcd}
\]
We are mainly interested in the case where \(X^\natural \to A^\sharp\) and \(W^\natural \to A^\sharp\) are coCartesian fibrations. In this case we write
\[
\Map_A^+(X,W) := \Map_{A^\sharp}^{\Delta^0}(X^\natural, W^\natural)
\]
to keep notation simple. Furthermore we denote \(\Map_A^\sharp(X,W)\) the simplicial set spanned by the marked edges. Note that \(\Map_A^\sharp(X,W)\) is an \(\infty\)-groupoid.
\end{sub}

\begin{sub}
Let \(A\) be a simplicial set and 
\[
\begin{tikzcd}
X\drar[swap]{p} & & Y\dlar{q}\\
& A &
\end{tikzcd}
\]
be coCartesian fibrations classified by \(F\colon A\to \Q\) and \(G\colon A\to \Q\) respectively. We then have a pullback square
\[
\begin{tikzcd}
\Map_A^\sharp(X,Y)\rar \dar & \Fun(A,\morphclass)\dar\\
\Delta^0\rar{(F,G)} & \Fun(A,\Q)\times \Fun(A,\Q)
\end{tikzcd}
\]
We define \(\equivfib^\sharp_A(X,Y)\) to be the pullback
\[
\begin{tikzcd}
\equivfib_A^\sharp(X,Y)\rar \dar & \eqclass\dar\\
\Map_A^\sharp(X,Y) \rar & \morphclass
\end{tikzcd}
\]
\end{sub}

\begin{sub}\label{grothendieckquillen}
Let \(B\) be a simplicial set and \(A\) be the nerve of a small category. In \cite[Theorem 5.2]{cocartesiankim} it is shown that there is a Quillen equivalence
\[
\lambda : \Fun(A,\mSet/B^\sharp) \leftrightarrow \mSet/A^\sharp\times B^\sharp : \rho
\]
where the slice categories are endowed with the coCartesian model structure and the functor category is endowed with the projective model structure. The left adjoint \(\lambda\) is given by taking the homotopy colimit while the right adjoint takes a map \(W^+\to A^\sharp \times B^\sharp\) to the functor
\[
a\mapsto \Map_{A^\sharp}^{B^\sharp}(A_{a/}^\sharp, W^+)
\]
with map to \(B^\sharp\) as constructed above.
\end{sub}

\begin{sub}
Consider a coCartesian fibration \(W\to \Delta^1\times A\). We obtain a map
\[
\begin{tikzcd}
\Map_{(\Delta^1)^\sharp}^{A^\sharp}((\Delta^1_{0/})^\sharp, W^\natural)\drar \ar{rr} & & \Map_{(\Delta^1)^\sharp}^{A^\sharp}((\Delta^1_{1/})^\sharp, W^\natural)\cong W^\natural_1\dlar\\
& A^\sharp &
\end{tikzcd}
\]
Since the inclusion \(\Delta^{\{0\}}\to (\Delta^1)^\sharp\) is marked left anodyne, we have a trivial fibration
\begin{equation}\label{sectionrect}
\Map_{(\Delta^1)^\sharp}^{A^\sharp}((\Delta^1_{0/})^\sharp, W^\natural)\to W_0^\natural
\end{equation}
over \(A^\sharp\), see \cite[Proposition 5.7]{cocartesiankim}, and choosing a section defines a map
\[
\begin{tikzcd}
W_0^\natural \ar{rr}\drar & & W_1^\natural\dlar\\
& A^\sharp &
\end{tikzcd}
\]
classified by a map \(W\to\morphclass\).
Applying this construction to the coCartesian fibration
classified by the evaluation map 
\[
\Delta^1\times \Fun(\Delta^1,\Q)\to \Q
\]
yields the following lift (with \(W=\Fun(\Delta^1,\Q)\)).
\begin{equation}\label{comparison}
\begin{tikzcd}
{} & \morphclass\dar{(s,t)}\\
\Fun(\Delta^1,\Q)\rar{(\mathit{ev}_0,\mathit{ev}_1)} \urar[dashed] & \Q\times \Q
\end{tikzcd}
\end{equation}
We get a commutative diagram
\[
\begin{tikzcd}
h(\Delta^1,\Q) \rar \dar & \eqclass\dar\\
\Fun(\Delta^1,\Q)\rar \drar & \morphclass\dar\\
& \Q\times \Q
\end{tikzcd}
\]
We will see that univalence of the universal coCartesian fibration is equivalent to the upper horizontal map being a categorical equivalence and we will define directed univalence to be a categorical equivalence of the middle horizontal map.
\end{sub}

\begin{proposition}\label{homotopyindependence}
The homotopy class of the lift (\ref{comparison}) in the Joyal model structure over \(\Q\times\Q\) is independent of the choice of section in (\ref{sectionrect}.)
\end{proposition}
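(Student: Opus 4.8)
The plan is to show that any two sections $\sigma,\sigma'$ of the trivial fibration \eqref{sectionrect} (applied to the tautological coCartesian fibration $W=\Fun(\Delta^1,\Q)$ over $\Delta^1\times\Fun(\Delta^1,\Q)$) produce lifts in \eqref{comparison} that are connected by a homotopy over $\Q\times\Q$ in the Joyal model structure; in particular, they have the same homotopy class. First I would recall that the map in \eqref{sectionrect} is a trivial fibration of marked simplicial sets in the coCartesian model structure over $A^\sharp$ (here $A=\Fun(\Delta^1,\Q)$), by the cited \cite[Proposition 5.7]{cocartesiankim}, since $\Delta^{\{0\}}\to(\Delta^1)^\sharp$ is marked left anodyne. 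The space of sections of a trivial fibration is always contractible; concretely, given $\sigma$ and $\sigma'$, one solves the lifting problem
\[
\begin{tikzcd}
\partial\Delta^1 \rar{(\sigma,\sigma')} \dar & \Map_{(\Delta^1)^\sharp}^{A^\sharp}((\Delta^1_{0/})^\sharp, W^\natural)\dar\\
\Delta^1 \rar \urar[dashed] & W_0^\natural
\end{tikzcd}
\]
(the bottom map being the constant homotopy on the identity of $W_0^\natural$, which makes sense since the right-hand map is a map over $W_0^\natural$), to obtain a path $\sigma\rightsquigarrow\sigma'$ in the $\infty$-groupoid $\Map_{A^\sharp}^\sharp$ of sections. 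This step is routine.

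Next I would run the construction of \ref{grothendieckquillen} functorially in this path. The passage from a section of \eqref{sectionrect} to a map $W_0^\natural\to W_1^\natural$ over $A^\sharp$, and hence to a classifying map $W\to\morphclass$, is natural: a $1$-simplex of the section space yields a map $\Delta^1\times W_0^\natural\to W_1^\natural$ over $A^\sharp$, i.e. a homotopy (through maps preserving coCartesian edges, as the marking is built in) between the two induced maps of coCartesian fibrations. By the universal property of $\morphclass$ — more precisely, because $\morphclass\to\Q\times\Q$ classifies maps of coCartesian fibrations preserving coCartesian edges, and the path above is such a homotopy over $A=\Fun(\Delta^1,\Q)$ fixing the endpoints in $\Q\times\Q$ — this produces a homotopy between the two lifts $\Fun(\Delta^1,\Q)\to\morphclass$ relative to $\Q\times\Q$. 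Since $\morphclass\to\Q\times\Q$ is an isofibration (it is a bifibration, hence an isofibration, by Corollary \ref{bifibration}), a relative homotopy is exactly a homotopy in the Joyal model structure over $\Q\times\Q$, so the two lifts represent the same homotopy class.

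The only genuinely delicate point is the second step: bookkeeping the naturality of the Grothendieck-type construction of \ref{grothendieckquillen} in the section, i.e. checking that a $1$-simplex of sections really does assemble into a map of coCartesian fibrations over $\Delta^1\times A$ whose associated classifying datum restricts to the two given lifts at the endpoints. This is where one must be careful that all the markings and the ``preserves coCartesian edges'' conditions are preserved along the homotopy; but since the section space is taken inside marked simplicial sets and the trivial fibration \eqref{sectionrect} lives in the coCartesian model structure, every edge of the section space is marked and the relevant edges in $W_0^\natural$ and $W_1^\natural$ are already coCartesian, so no additional verification is needed beyond unwinding the adjunctions. The remaining details — contractibility of the section space, compatibility of pullbacks — are standard.
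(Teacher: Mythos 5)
Your first step matches the paper: any two sections of the trivial fibration \eqref{sectionrect} are homotopic over \(A^\sharp\) (lift the cofibration \((\partial\Delta^1)^\sharp\times W_0^\natural\to\mcyl\times W_0^\natural\) against it; note that your displayed square does not typecheck, since \(\sigma,\sigma'\) are sections, i.e.\ maps out of \(W_0^\natural\), not vertices of the mapping object), and this yields a homotopy \(A\times\Delta^1\to\morphclass\) between the two lifts. The genuine gap is your final step, where you assert that since \(\morphclass\to\Q\times\Q\) is an isofibration, ``a relative homotopy is exactly a homotopy in the Joyal model structure over \(\Q\times\Q\)''. This is false as a general principle: \(\Delta^1\) is not an interval for the Joyal model structure, and a \(\Delta^1\)-homotopy between two functors is merely a natural transformation; it identifies their classes in the (sliced) Joyal homotopy category only if it is a \emph{natural equivalence}, i.e.\ pointwise an equivalence. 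Verifying precisely this is the entire second half of the paper's proof: for each object \(a\) of \(A\) one checks that the component edge \(\Delta^1\to\morphclass\) is an equivalence, by reducing to its image in \(\Q\times\Q\) and applying Proposition \ref{groupoidclass} to the pair of coCartesian fibrations over \(\Delta^1\) obtained by taking fibers of the homotopy (these are constant, so coCartesian and Cartesian edges coincide). Your proposal never performs, or even mentions, this pointwise-equivalence check; the ``delicate point'' you flag (markings being preserved) is not where the real content lies.

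A secondary gap: your claim that the homotopy is ``relative to \(\Q\times\Q\)'' is not justified as stated. A map \(A\times\Delta^1\to\morphclass\) is only determined once classifying pullback squares for \(W_0^\natural\times\mcyl\) and \(W_1^\natural\times\mcyl\) over \(A^\sharp\times\mcyl\) are chosen; to have the homotopy lie over \(\Q\times\Q\) you must take these to be the pullbacks along the projection of the tautological squares for \(W_0\) and \(W_1\), and you should say so. If you do make that choice, your route can in fact be repaired differently from the paper: the components of the homotopy then lie in the fibers of \(\morphclass\to\Q\times\Q\), which are the \(\infty\)-groupoids \(\Map^\sharp(X,Y)\), hence are automatically equivalences in \(\morphclass\); but even then the passage from a pointwise-invertible \(\Delta^1\)-homotopy over the base to a \(J\)-homotopy over the base must be stated and argued, whereas your write-up conflates the two notions and omits the decisive verification.
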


\begin{proof}
Any two sections are \(\mcyl\)-homotopic over \(A^\sharp\). Thus we get a commutative triangle
\begin{equation}\label{eqofmorphclass}
\begin{tikzcd}
W_0^\natural \times \mcyl \ar{rr}\drar & &  W_1^\natural\times \mcyl\dlar\\
& A^\sharp \times \mcyl &
\end{tikzcd}
\end{equation}
This is equivalent to a map
\[
A\times \Delta^1 \to \morphclass
\]
We claim that this is a natural equivalence of functors, i.e. for each object \(a\) of \(A\), the induced map 
\[
\Delta^1 \to \morphclass
\]
defines an equivalence in \(\morphclass\). Note that this is the case if and only if the induced map
\[
\Delta^1 \to \Q\times \Q
\]
defines an equivalence. Such a map corresponds to a pair of coCartesian fibrations over \(\Delta^1\) and by Proposition \ref{groupoidclass} this defines an equivalence if and only if every coCartesian edge is Cartesian and vice versa. Now taking fibers in (\ref{eqofmorphclass}) this is clearly the case.
\end{proof}

\begin{sub}
Let \(A\) be a simplicial set. Let \(X\to A\) be a coCartesian fibration classified by a functor \(F\colon A\to \Q\) and \(Y\to A\) be a coCartesian fibration classified by \(G\colon A\to \Q\). Taking the fiber at \((F,G)\) in the diagram
\[
\begin{tikzcd}
\Fun(A,h(\Delta^1,\Q)) \rar \dar & \Fun(A,\eqclass)\dar\\
\Fun(A,\Fun(\Delta^1,\Q))\rar \drar & \Fun(A,\morphclass)\dar\\
& \Fun(A,\Q)\times \Fun(A,\Q)
\end{tikzcd}
\]
yields the commutative square
\[
\begin{tikzcd}
	k(A,\Q)(F,G)\rar \dar & \equivfib^\sharp_A(X,Y)\dar\\
	\Fun(A,\Q)(F,G)\rar & \Map_A^\sharp(X,Y)
\end{tikzcd}
\]
\end{sub}

\begin{sub}
Recall that \(\coCart(A)\) denotes the homotopy category of \(\mSet/A^\sharp\) endowed with the coCartesian model structure. The following Proposition can be viewed as a Grothendieck construction on homotopy categories. Here, we have to be careful about size: if $\kappa$ denotes an inaccessible
cardinal defining \(\Q=\Q_\kappa\), we define \(\coCart_\kappa(A)\)
as the full subcategory of \(\coCart(A)\) spanned by those objects
isomorphic to coCartesian
fibrations \(X\to A\) with \(\kappa\)-small fibers.
In the case where \(A\) is \(\kappa\)-small, we can identify \(\coCart_\kappa(A)\)
with the localization of \(\mSet_\kappa/A^+\) by the weak equivalence of
the model structure of Theorem \ref{cocartmodelstructure},
where \(\mSet_\kappa/A^+\) denotes the full subcategory of \(\mSet/A^+\)
spanned by coCartesian
fibrations with \(\kappa\)-small fibers.
\end{sub}

\begin{proposition}
There is a canonical functor
\[
ho(\Fun(A,\Q))\to \coCart_\kappa(A)
\]
sending a functor \(A\to \Q\) to the coCartesian fibration that it classifies.
\end{proposition}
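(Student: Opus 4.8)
The plan is to construct the functor at the level of homotopy categories by exhibiting, functorially in a diagram $A\to\Q$, the coCartesian fibration it classifies, and then checking that this assignment respects the homotopy relation. First I would note that by definition of $\Q=\Q_\kappa$ and the universal coCartesian fibration $\quniv\colon\Q_\bullet\to\Q$, every functor $F\colon A\to\Q$ gives, by pullback of $\quniv$ along $F$, a $\kappa$-small coCartesian fibration $X_F\to A$; this is strictly functorial in $F$ as a morphism of the category $\Fun(A,\Q)$ (pullback along a composite is the composite of pullbacks, up to the usual coherent choice), so we obtain an actual functor from the category $\Fun(A,\Q)$ (viewed as a $1$-category, or rather its underlying category of objects and morphisms) to $\mSet_\kappa/A^+$, landing in coCartesian fibrations. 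Composing with the localization functor $\mSet_\kappa/A^+\to\coCart_\kappa(A)$ of Theorem~\ref{cocartmodelstructure} gives a functor $\Fun(A,\Q)\to\coCart_\kappa(A)$, where we mark the coCartesian edges of $X_F$ to view it as an object of $\mSet_\kappa/A^+$.

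Next I would promote this to a functor out of $ho(\Fun(A,\Q))$. For this it suffices to show that a natural transformation $F\Rightarrow G$ which is invertible in $ho(\Fun(A,\Q))$ — equivalently, a functor $A\times\Delta^1\to\Q$ whose restriction to each $A\times\{a\}$, no wait, to each $\{a\}\times\Delta^1$ is an equivalence in $\Q$ — is sent to an isomorphism in $\coCart_\kappa(A)$, i.e. to a coCartesian equivalence. A natural equivalence $H\colon A\times\Delta^1\to\Q$ classifies a $\kappa$-small coCartesian fibration $W\to A\times\Delta^1$ whose two ends are $X_F$ and $X_G$. By Proposition~\ref{groupoidclass}, the hypothesis that $\{a\}\times\Delta^1\to\Q$ is an equivalence for every $a$ means precisely that the restriction $W_{\{a\}\times\Delta^1}\to\Delta^1$ is a coCartesian-and-Cartesian fibration with the two classes of edges coinciding, for every object $a$; hence the classifying functor $H$ factors through $k(\Q)$, so $W\to A\times\Delta^1$ is both a coCartesian and a Cartesian fibration with the coCartesian edges exactly the Cartesian ones. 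Pulling back the inclusion $\{0\},\{1\}\hookrightarrow\Delta^1$ and using that these are Joyal equivalences over $A\times\Delta^1$ in the appropriate sense — concretely, invoking Theorem~\ref{cocartesianinvariance} together with the smoothness Lemma~\ref{smooth} applied to $W^\natural\to(A\times\Delta^1)$ with the edge $\Delta^1$ marked — one sees that both inclusions $X_F^\natural\hookrightarrow W^\natural$ and $X_G^\natural\hookrightarrow W^\natural$ become weak equivalences in the coCartesian model structure over $A$ (via the projection $A\times\Delta^1\to A$), so $X_F$ and $X_G$ are isomorphic in $\coCart_\kappa(A)$, as desired. Running the same argument over $A\times\Delta^2$ (or invoking the rectification of Section~\ref{grothendieckquillen}) shows this isomorphism is natural, so we indeed get a well-defined functor on $ho(\Fun(A,\Q))$.

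The main obstacle I expect is the strict functoriality of $F\mapsto X_F$ on $1$-morphisms: pullback is only pseudofunctorial, so a priori $X_{G\circ F}$ and $X_F$ precomposed agree only up to canonical isomorphism. The clean fix is to use the universe $\U(\kappa)$ (of which $\Q$ is a subobject): the universal family $\U(\kappa)_\bullet\to\U(\kappa)$ comes with a genuinely functorial choice of pullbacks because its fibers are defined by honest limits, so $F\mapsto F^*\quniv$ is strictly functorial in $F$ on the nose, not merely pseudofunctorially. Alternatively — and this is probably the cleanest for the paper — one simply observes that $\coCart_\kappa(A)$ is a $1$-category and that we only need a functor up to natural isomorphism into it; any choice of pullbacks gives a pseudofunctor from $\Fun(A,\Q)$ to $\mSet_\kappa/A^+$, and composing with the localization functor kills the coherence data, yielding an honest functor on homotopy categories. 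I would phrase the final argument this way, relegating the size bookkeeping of the second paragraph of Section~\ref{grothendieckquillen} — the identification of $\coCart_\kappa(A)$ with a localization of $\mSet_\kappa/A^+$ when $A$ is $\kappa$-small, and the manifest passage to the filtered colimit over $\kappa$-small subobjects of $A$ in general — to a remark.
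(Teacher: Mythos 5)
There is a genuine gap, and it sits exactly where the actual content of the Proposition lies: the definition of the functor on morphisms. You assert that \(F\mapsto F^*\quniv\) is ``strictly functorial in \(F\) as a morphism of the category \(\Fun(A,\Q)\)'' and that the only subtlety is pseudofunctoriality of pullback. But \(\Fun(A,\Q)\) is a quasi-category, not a \(1\)-category: it has no ``underlying category of objects and morphisms'' with associative composition, and, more importantly, a morphism \(F\to G\) in it is an edge, i.e.\ a map \(H\colon A\times\Delta^1\to\Q\). There is no pullback operation along such an edge producing a map \(X_F\to X_G\) over \(A\); what \(H\) gives you is a coCartesian fibration \(W\to A\times\Delta^1\) whose two ends are \(X_F\) and \(X_G\), and extracting from this a coCartesian functor \(W_0^\natural\to W_1^\natural\) over \(A^\sharp\) is precisely the nontrivial step: one uses the trivial fibration \eqref{sectionrect} \(\Map_{(\Delta^1)^\sharp}^{A^\sharp}((\Delta^1_{0/})^\sharp,W^\natural)\to W_0^\natural\), chooses a section, and composes with the map to \(W_1^\natural\); well-definedness of the resulting homotopy class is Proposition \ref{homotopyindependence}. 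Your proposed ``fix'' via the strict universe \(\U(\kappa)\) addresses a different (and here irrelevant) issue, namely functoriality of pullback along maps of bases into \(\Q\), not the action on edges of the functor quasi-category.

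Consequently the second half of your argument is also aimed at the wrong target: to get a functor out of \(ho(\Fun(A,\Q))\) you do not need to check that natural equivalences go to coCartesian equivalences (that would be relevant for factoring a functor through a localization); you need to check that the assignment on homotopy classes of edges respects composition, where a composition in \(ho(\Fun(A,\Q))\) is witnessed by a \(2\)-simplex of \(\Fun(A,\Q)\), i.e.\ by a coCartesian fibration \(W\to\Delta^2\times A\). This is what the paper's proof actually does: it shows the triangle \(W_0^\natural\to W_1^\natural\to W_2^\natural\) obtained from the section construction commutes up to homotopy over \(A^\sharp\), by comparing it with the strictly commuting triangle of the objects \(\Map_{(\Delta^2)^\sharp}^{A^\sharp}((\Delta^2_{i/})^\sharp,W^\natural)\) coming from the Quillen equivalence of paragraph \ref{grothendieckquillen}. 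Your sentence ``running the same argument over \(A\times\Delta^2\)'' gestures at this but supplies no argument, and since your construction of the morphism-level map was never made, there is nothing yet whose compatibility with composition could be checked. (Your observation that a pointwise equivalence \(A\times\Delta^1\to\Q\) yields coCartesian-equivalent ends, via Proposition \ref{groupoidclass} and Lemma \ref{smooth}, is fine in itself, but it is not what the Proposition requires.)
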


\begin{proof}
The Grothendieck construction on morphisms is given by taking \(\pi_0\) of the map
\[
\Fun(A,\Q)(F,G)\to \Map_A^\sharp(X,Y)
\]
We need to show that this defines a functor. A composition in \(ho(\Fun(A,\Q))\) is represented by a coCartesian fibration
\[
\begin{tikzcd}
	W\dar\\
	\Delta^2\times A
\end{tikzcd}
\]
and we need to show that the induced triangle
\[
\begin{tikzcd}
	W_0^\natural \rar \drar & W_1^\natural\dar\\
	& W_2^\natural
\end{tikzcd}
\]
commutes up to homotopy in \(\mSet/A^\sharp\). But this triangle is homotopic to the triangle
\[
\begin{tikzcd}
	\Map_{(\Delta^2)^\sharp}^{A^\sharp}((\Delta^2_{0/})^\sharp, W^\natural)\rar \drar & \Map_{(\Delta^2)^\sharp}^{A^\sharp}((\Delta^2_{1/})^\sharp, W^\natural)\dar\\
	& \Map_{(\Delta^2)^\sharp}^{A^\sharp}((\Delta^2_{2/})^\sharp, W^\natural)
\end{tikzcd}
\]
which strictly commutes in \(\mSet/A^\sharp\).
\end{proof}

\begin{remark}
We will see that univalence is equivalent to the Grothendieck construction inducing an equivalence of maximal groupoids, while directed univalence is equivalent to the Grothendieck construction being an equivalence of categories.
\end{remark}


\section{Univalence}

\begin{sub}
We show that the universal coCartesian fibration satisfies the univalence axiom. This generalizes previous results that the universal left fibration \cite{cisinskibook} and the universal Kan fibration \cite{klvsimplicial} satisfy the univalence axiom.
\end{sub}

\begin{proposition}\label{targetfibration}
The target map
\[
t:\eqclass \to \Q
\]
is a trivial fibration.
\end{proposition}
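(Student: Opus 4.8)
The plan is to show that the target map $t\colon \eqclass \to \Q$ has the right lifting property against all monomorphisms of simplicial sets, i.e.\ against the boundary inclusions $\partial\Delta^n \to \Delta^n$. Unwinding the definition of $\eqclass = \mathrm{Eq}^\sharp_{(\Q\times\Q)^\sharp}((\Q_\bullet^0)^\natural,(\Q_\bullet^1)^\natural)$, a lifting problem of $t$ against $\partial\Delta^n \to \Delta^n$ amounts to the following data: a coCartesian fibration $Y\to \Delta^n$ (the one classified by the composite $\Delta^n \to \Q$ given by the \enquote{target} leg), a coCartesian fibration $X_0 \to \partial\Delta^n$ together with a coCartesian equivalence $w_0\colon X_0 \xrightarrow{\sim} Y|_{\partial\Delta^n}$ over $\partial\Delta^n$, and we must produce a coCartesian fibration $X_1 \to \Delta^n$ with a pullback square over $\partial\Delta^n \hookrightarrow \Delta^n$ recovering $X_0$, together with a coCartesian equivalence $w_1\colon X_1 \xrightarrow{\sim} Y$ extending $w_0$. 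But this is exactly the statement of Theorem~\ref{extension}, applied to the monomorphism $i\colon \partial\Delta^n \to \Delta^n$ and the pullback square with vertical arrows $Y|_{\partial\Delta^n} \to (\partial\Delta^n)^\sharp$ and $Y \to (\Delta^n)^\sharp$. Theorem~\ref{extension} furnishes precisely $X_1$, $w_1$, and the pullback square, and moreover guarantees $\kappa$-smallness of $X_1 \to \Delta^n$ since all the given data are $\kappa$-small.

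Concretely, first I would record that a map $\Delta^n \to \eqclass$ is the same as a pair of $\kappa$-small coCartesian fibrations over $\Delta^n$ together with a coCartesian equivalence between them (this follows from the discussion of the universal morphism classifier and the universal equivalence classifier, together with the pullback square defining $\morphclass$), and that the target map $t$ records the codomain coCartesian fibration. Then a lifting square for $t$ against $\partial\Delta^n \to \Delta^n$ translates, via these identifications, into: a coCartesian fibration over $\Delta^n$ (the target), and over $\partial\Delta^n$ a second coCartesian fibration equipped with a coCartesian equivalence to the restriction of the target. Second, I would invoke Theorem~\ref{extension} verbatim with $K = \partial\Delta^n$, $L = \Delta^n$: take $Y_1^\natural \to L^\sharp$ to be the (marked) target coCartesian fibration, $Y_0^\natural \to K^\sharp$ its pullback, $X_0^\natural \to K^\sharp$ the source coCartesian fibration over the boundary, and $w_0$ the given equivalence. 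The theorem outputs $X_1^\natural \to L^\sharp$, a pullback square restricting $X_1$ to $X_0$ over $K$, and an equivalence $w_1$ extending $w_0$. Finally, repackaging $(X_1 \to \Delta^n,\, Y \to \Delta^n,\, w_1)$ as a map $\Delta^n \to \eqclass$ solves the original lifting problem, and by construction its composite with $t$ is the chosen bottom map $\Delta^n \to \Q$; meanwhile its restriction along $\partial\Delta^n$ is the given top map, because the pullback square and the restriction of $w_1$ reproduce exactly the data over the boundary.

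The only genuine subtlety — and hence the step I would be most careful about — is the bookkeeping identifying a lifting problem for $t$ with the input of Theorem~\ref{extension}, in particular making sure the markings match up: on $\eqclass$ the relevant simplicial set consists of the \emph{marked} edges of the internal equivalence object, so one must check that the marked structures $X_i^\natural$, $Y_i^\natural$ arising from the classifying maps are the natural ones (coCartesian edges marked), which is what makes Theorem~\ref{extension} applicable and what guarantees that the output assembles back into an honest simplex of $\eqclass$ rather than of some larger object. Once this translation is in place, the argument is immediate, and there is nothing further to prove: a map out of $\Delta^n$ that restricts correctly to $\partial\Delta^n$ and lies over the prescribed map to $\Q$ is precisely a solution, so $t$ is a trivial fibration. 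I expect no size issues beyond those already handled by the $\kappa$-smallness clause of Theorem~\ref{extension}.
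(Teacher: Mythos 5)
Your proof is correct and follows essentially the same route as the paper: unwind a lifting problem for $t$ against a monomorphism (the paper uses an arbitrary cofibration $i\colon K\to L$, you use the generating boundary inclusions, which is equivalent) into the data of Theorem~\ref{extension} and apply that theorem verbatim. The bookkeeping point you flag about strictly repackaging the output as a simplex of $\eqclass$ is handled by the defining coherence property of the universe $\U(\kappa)$, exactly as implicitly used in the paper's own proof.
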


\begin{proof}
We need to solve lifting problems of the form
\[
\begin{tikzcd}
K\rar \dar[swap]{i} & \eqclass\dar{t}\\
L\rar\urar[dashed] & \Q
\end{tikzcd}
\]
which corresponds to completing the diagram
\[
\begin{tikzcd}[column sep=small]
X_0^\natural\drar{w_0}\ar[dashed]{rr}\ar[bend right]{ddr} & & X_1^\natural\drar[dashed]{w_1}\ar[dashed, bend right]{ddr} & \\
 & Y_0^\natural\dar \ar{rr} & & Y_1^\natural\dar\\
 & K^\sharp \ar{rr}{i} & & L^\sharp
\end{tikzcd}
\]
such that the back square is a pullback, \(w_1\) is a coCartesian equivalence and \(X_1^\natural \to L^\sharp\) is a coCartesian fibration. This is precisely the content of Theorem \ref{extension}.
\end{proof}

\begin{theorem}[Univalence]
The diagonal 
\[
\Q\to \eqclass
\]
is a trivial cofibration of the Joyal model structure.
\end{theorem}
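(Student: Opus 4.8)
The plan is to deduce univalence from the two facts already assembled: that the target map $t\colon\eqclass\to\Q$ is a trivial fibration (Proposition \ref{targetfibration}), and that $\quniv\colon\Q_\bullet\to\Q$ is an isofibration (Remark \ref{remark:dropsize}), hence $\Q$ is an $\infty$-category (Theorem \ref{universefibrancy}). The diagonal $\delta\colon\Q\to\eqclass$ classifies, over each simplex, the identity equivalence $\mathrm{id}\colon X\xrightarrow{\sim}X$ of a small coCartesian fibration; composing with $t$ gives the identity of $\Q$, so $\delta$ is a section of the trivial fibration $t$. In particular $\delta$ is a monomorphism (being a section of any map it is split mono, and one checks on simplices it is levelwise injective), so it is a cofibration of the Joyal model structure. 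It remains to see that $\delta$ is a Joyal equivalence.

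First I would record that $t$ is a Joyal equivalence: a trivial fibration of simplicial sets is in particular a trivial fibration of the Joyal model structure, hence a weak equivalence there. Since $t\circ\delta=\mathrm{id}_\Q$, the map $\delta$ is a Joyal equivalence if and only if $\delta\circ t$ is a Joyal equivalence, by two-out-of-three and two-out-of-six applied to the composable pair $(\,\delta, t\,)$ together with $(t,\delta)$. Concretely: from $t\delta=\mathrm{id}$ and $t$ a Joyal equivalence we get $\delta$ is a Joyal equivalence provided $\delta t$ is; and $\delta t$ is a Joyal equivalence iff $t\,\delta t = t$ is (since $t$ is already known to be one and is epi on homotopy). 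The cleanest formulation: in any model category, if $f g = \mathrm{id}$ and $f$ is a weak equivalence, then $g$ is a weak equivalence. This is exactly two-out-of-three for the pair $g$ then $f$. So $\delta$ is a Joyal equivalence, and being also a cofibration, it is a trivial cofibration.

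The only genuinely substantive input is Proposition \ref{targetfibration}, i.e.\ Theorem \ref{extension}: that is where the work of extending an equivalence of coCartesian fibrations along a monomorphism lives, and it is what forces the fibers of $t$ over every simplex of $\Q$ to be contractible. The remaining argument is a formal manipulation in the Joyal model structure once one knows $\Q$ is fibrant and $\eqclass$ is fibrant over $\Q$ (both coming from the fibrancy results of Section 4 and Proposition \ref{internalhomfibration}); I do not expect any obstacle there beyond verifying that $\delta$ really is the diagonal in the sense of classifying the identity equivalence, which is immediate from the description of maps into $\eqclass$ as coCartesian equivalences between coCartesian fibrations. Thus the statement reduces, with no hard step of its own, to the extension property already proven.
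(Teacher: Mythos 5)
Your proof is correct and follows essentially the same route as the paper: the diagonal is a section of the target map $t\colon\eqclass\to\Q$, which is a trivial fibration by Proposition \ref{targetfibration}, so two-out-of-three gives that the diagonal is a Joyal equivalence (and it is a monomorphism, hence a cofibration). The intermediate two-out-of-six detour is unnecessary, but your final ``cleanest formulation'' is exactly the paper's argument.
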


\begin{proof}
We have the commutative diagram
\[
\begin{tikzcd}
	\Q\rar \drar[swap]{id} & \eqclass\dar{t}\\
	& \Q
\end{tikzcd}
\]
The target map is a trivial fibration, hence by 2-out-of-3 the diagonal is a Joyal equivalence.
\end{proof}

\begin{sub}
We relate univalence to the Grothendieck construction.
\end{sub}

\begin{proposition}
The map
\[
\Fun(A,h(\Delta^1,\Q))\to \Fun(A,\eqclass)
\]
is a Joyal equivalence.
\end{proposition}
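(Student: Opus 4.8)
The plan is to reduce the statement to the facts that $\Fun(A,-)$ preserves trivial fibrations, that the target map $t\colon\eqclass\to\Q$ is a trivial fibration (Proposition~\ref{targetfibration}), and to two-out-of-three. Write $v\colon h(\Delta^1,\Q)\to\Q$ for the composite of the projection $h(\Delta^1,\Q)\to\eqclass$ with $t$; under the identification of $h(\Delta^1,\Q)$ with the pullback of $\Fun(\Delta^1,\Q)\to\morphclass\leftarrow\eqclass$ preceding the statement, this is the evaluation $\mathit{ev}_1$. The crucial point is the claim that $v$ is a trivial fibration. Granting it, the proposition follows formally: $\Fun(A,-)$ is right adjoint to $A\times(-)$, which preserves monomorphisms, so $\Fun(A,-)$ sends trivial fibrations to trivial fibrations; hence $\Fun(A,v)$ and $\Fun(A,t)$ are trivial fibrations, in particular Joyal equivalences. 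Since $v=t\circ(h(\Delta^1,\Q)\to\eqclass)$, the triangle formed by $\Fun(A,h(\Delta^1,\Q))\to\Fun(A,\eqclass)$, $\Fun(A,v)$ and $\Fun(A,t)$ commutes, and two-out-of-three forces $\Fun(A,h(\Delta^1,\Q))\to\Fun(A,\eqclass)$ to be a Joyal equivalence.

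To prove that $v$ is a trivial fibration I would solve the lifting problems against the boundary inclusions $\partial\Delta^n\hookrightarrow\Delta^n$ directly. For $n=0$ one must find a vertex of $h(\Delta^1,\Q)$ over a prescribed object $F$ of $\Q$: the constant edge at $F$ works, its image in $\morphclass$ being the identity of the fibration classified by $F$, which lies in $\eqclass$. For $n\ge 1$, the datum over $\partial\Delta^n$ is a functor $\sigma_\partial\colon\Delta^1\times\partial\Delta^n\to\Q$ whose associated morphism of coCartesian fibrations over $\partial\Delta^n$ factors through $\eqclass$; as $\eqclass$ is the full subobject of $\morphclass$ spanned by the equivalences, this forces, for each vertex $i$, the coCartesian fibration over $\Delta^1$ classified by $\sigma_\partial|_{\Delta^1\times\{i\}}$ to have invertible classifying functor, which by Lemma~\ref{cocartovereq} and Proposition~\ref{groupoidclass} means precisely that $\sigma_\partial$ carries the edge $\Delta^1\times\{i\}$ to an equivalence of $\Q$. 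Therefore $\sigma_\partial$, together with the prescribed $n$-simplex of $\Q$ on $\{1\}\times\Delta^n$, underlies a map of marked simplicial sets
\[
\mcyl\times(\partial\Delta^n)^\flat\,\cup\,\{1\}\times(\Delta^n)^\flat\;\longrightarrow\;\Q^\natural ,
\]
where $\Q^\natural$ is $\Q$ with its equivalences marked. The displayed inclusion into $\mcyl\times(\Delta^n)^\flat$ is the generator (B2$'$) of the marked right anodyne extensions (Remark~\ref{dualgenerators}), and $\Q^\natural$ is a marked right fibration over $\Delta^0$ — its underlying map $\Q\to\Delta^0$ is a Cartesian fibration and its marked edges are the Cartesian ones — so the extension $\mcyl\times(\Delta^n)^\flat\to\Q^\natural$ exists. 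Its underlying functor $\sigma\colon\Delta^1\times\Delta^n\to\Q$ restricts correctly, and it lies in $h(\Delta^1,\Q)$: since $n\ge 1$, every vertex of $\Delta^n$ already lies in $\partial\Delta^n$, where $\sigma$ agrees with $\sigma_\partial$, so the classified morphism of coCartesian fibrations over $\Delta^n$ is fibrewise an equivalence and hence lies in $\eqclass$. This solves the lifting problem and proves the claim.

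The step I expect to be the main obstacle is the identification invoked above, namely that an edge of $\Q$ classifies an equivalence of coCartesian fibrations if and only if it is an equivalence in $\Q$; this packages the comparison between the Cartesian and coCartesian structures over $\Delta^1$ and is where Proposition~\ref{groupoidclass} does the real work. Once it is secured, everything else is bookkeeping with the generators of the marked model structures and the elementary behaviour of $\Fun(A,-)$ on trivial fibrations.
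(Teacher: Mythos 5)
Your proof is correct and follows the same skeleton as the paper's: both target maps \(h(\Delta^1,\Q)\to\Q\) and \(t\colon\eqclass\to\Q\) are trivial fibrations, \(\Fun(A,-)\) preserves trivial fibrations since \(A\times(-)\) preserves monomorphisms, and 2-out-of-3 concludes. The one genuine difference is how the first trivial fibration is obtained: the paper simply cites \cite[Corollary 3.5.10]{cisinskibook}, whereas you reprove it from scratch by lifting against \(\partial\Delta^n\to\Delta^n\) using the generator (B2\('\)) of Remark \ref{dualgenerators} and the observation that \(\Q\) with its equivalences marked is a marked right fibration over the point. That argument is sound and has the merit of staying inside the marked-anodyne toolkit of this paper; what it costs you is the bookkeeping at the end, and that is the only place where you are a bit quick. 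If \(h(\Delta^1,\Q)\) is taken as in Cisinski's book (simplices of \(\Fun(\Delta^1,\Q)\) all of whose vertices are invertible arrows of \(\Q\)) — which is what the citation presupposes — your lifting argument is complete as written, since for \(n\geq 1\) every vertex of \(\Delta^n\) lies in \(\partial\Delta^n\). If instead you insist on the pullback description along \(\eqclass\to\morphclass\), you need the equivalence ``an edge of \(\Q\) is invertible if and only if the associated transport functor is an equivalence of the fibers''; Proposition \ref{groupoidclass} (with Lemma \ref{cocartovereq}) gives ``invertible \(\Leftrightarrow\) coCartesian\,=\,Cartesian over \(\Delta^1\)'', but the remaining step, that such a bifibrant structure forces the transport \(W_0\to W_1\) to be a coCartesian equivalence, is not spelled out in your write-up and deserves a sentence (it is also implicitly what makes the paper's commutative square \(h(\Delta^1,\Q)\to\eqclass\) exist in the first place). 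Finally, note that your reduction needs the triangle over \(\Q\) to commute, i.e.\ \(t\) composed with \(h(\Delta^1,\Q)\to\eqclass\) equals \(\mathit{ev}_1\); this holds because the comparison lift \eqref{comparison} is constructed over \(\Q\times\Q\), and is worth stating explicitly.
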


\begin{proof}
The target map
\[
h(\Delta^1,\Q)\to \Q
\]
is a trivial fibration by \cite[Corollary 3.5.10]{cisinskibook} and the target map
\[
\eqclass\to \Q
\]
is a trivial fibration by Proposition \ref{targetfibration}. Therefore, since trivial fibrations are stable under exponentiation and by 2-out-of-3, the map
\[
\Fun(A,h(\Delta^1,\Q))\to \Fun(A,\eqclass)
\]
is a Joyal equivalence.
\end{proof}

\begin{corollary}
For any pair of coCartesian fibrations
\[
\begin{tikzcd}
X\drar & & Y\dlar\\
& A & 
\end{tikzcd}
\]
classified by functors \(F,G\colon A\to Q\) respectively, the induced map
\[
k(A,\Q)(F,G)\to \equivfib_A(X,Y)
\]
is an equivalence of \(\infty\)-groupoids.
\end{corollary}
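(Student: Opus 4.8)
The plan is to obtain the statement as a direct consequence of the preceding Proposition. Recall that the square in question was produced by taking the fibre at $(F,G)$ of the two downward legs of the last displayed diagram over $\Fun(A,\Q)\times\Fun(A,\Q)$; in particular $k(A,\Q)(F,G)$ and $\equivfib_A(X,Y)$ are, by construction, the pullbacks of $\Fun(A,h(\Delta^1,\Q))$ and $\Fun(A,\eqclass)$ along $(F,G)\colon \Delta^0\to\Fun(A,\Q)\times\Fun(A,\Q)$. So it suffices to show that base change along $(F,G)$ sends the Joyal equivalence $\Fun(A,h(\Delta^1,\Q))\to\Fun(A,\eqclass)$ of the preceding Proposition to a Joyal equivalence. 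For this I would use that base change along an arbitrary map is a right Quillen functor between the Joyal slice model structures, hence preserves weak equivalences between fibrant objects; thus it is enough to know that the two structure maps
\[
\Fun(A,h(\Delta^1,\Q))\to\Fun(A,\Q)\times\Fun(A,\Q)\quad\text{and}\quad\Fun(A,\eqclass)\to\Fun(A,\Q)\times\Fun(A,\Q)
\]
are isofibrations, the common base $\Fun(A,\Q)\times\Fun(A,\Q)=\Fun(A,\Q\times\Q)$ being an $\infty$-category. Granting this, applying the base change functor to the Joyal equivalence above shows $k(A,\Q)(F,G)\to\equivfib_A(X,Y)$ is a Joyal equivalence, and since both sides are $\infty$-groupoids this is an equivalence of $\infty$-groupoids.

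The core of the argument is therefore the two isofibration statements. Since $\Fun(A,-)$ preserves isofibrations by cartesian-closedness of the Joyal model structure, it is enough to check that $h(\Delta^1,\Q)\to\Q\times\Q$ and $\eqclass\to\Q\times\Q$ are isofibrations, and for both I would invoke the elementary principle: if $E$ is an $\infty$-category, $C\subseteq E$ a full subcategory closed under equivalences, and $E\to B$ an isofibration onto an $\infty$-category $B$, then $C\to B$ is again an isofibration — against inner anodyne extensions because these are bijective on objects, so any lift into $E$ automatically lands in $C$, and against $\{0\}\hookrightarrow J$ because the far endpoint of an equivalence of $E$ starting in $C$ still lies in $C$. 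For the first map, $h(\Delta^1,\Q)$ is the full subcategory of $\Fun(\Delta^1,\Q)$ on the invertible morphisms, which is closed under equivalences, while $\Fun(\Delta^1,\Q)\to\Fun(\partial\Delta^1,\Q)=\Q\times\Q$ is an isofibration by cartesian-closedness. For the second, $\morphclass\to\Q\times\Q$ is an isofibration by Proposition \ref{internalhomfibration}, and $\eqclass$ is the full subcategory of $\morphclass$ spanned by the objects which classify coCartesian equivalences; this subcategory is closed under equivalences because an equivalence of $\morphclass$ projects (functors preserve equivalences) to an equivalence of $\Q\times\Q$, which by Proposition \ref{groupoidclass} and Lemma \ref{cocartovereq} means the two coCartesian fibrations over $\Delta^1$ it encodes have invertible transport functors, so that the commutative square carried by the equivalence forces, by two-out-of-three, the two classified maps to be equivalences together.

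The step I expect to require the most care is precisely this last unwinding: verifying that $\eqclass$, defined as the simplicial set of fully marked simplices of $\underline{\mathrm{Eq}}^+$, really is the full simplicial subset of $\morphclass$ on the vertices classifying coCartesian equivalences, and that ``being a coCartesian equivalence'' is invariant under equivalence of objects in $\morphclass$. Everything else — the right Quillen functor argument for base change, preservation of isofibrations under $\Fun(A,-)$, and the full-subcategory lemma — is formal and routine.
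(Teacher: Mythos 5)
Your proposal is correct and takes essentially the same route as the paper: the map in question is the fiber at \((F,G)\) of the Joyal equivalence \(\Fun(A,h(\Delta^1,\Q))\to\Fun(A,\eqclass)\), which is a map of Joyal fibrations over the \(\infty\)-category \(\Fun(A,\Q)\times\Fun(A,\Q)\), so passing to fibers preserves the equivalence. The only difference is that you spell out why the two structure maps are isofibrations (via the full-subcategory-closed-under-equivalences argument and Proposition \ref{internalhomfibration}), a point the paper's proof simply takes for granted.
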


\begin{proof}
The map is obtained by taking the fiber of
\[
\begin{tikzcd}
	\Fun(A,h(\Delta^1,\Q))\ar{rr}\drar & & \Fun(A,\eqclass)\dlar\\
	& \Fun(A,\Q)\times \Fun(A,\Q) &
\end{tikzcd}
\]
which is a Joyal equivalence between Joyal fibration with target an \(\infty\)-category. Therefore the induced map on fibers is a Joyal equivalence.
\end{proof}

\begin{corollary}\label{grothendieckgroupoid}
The Grothendieck construction induces an equivalence on maximal groupoids.
\end{corollary}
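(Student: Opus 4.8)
The plan is to check that the Grothendieck construction $\Gamma\colon ho(\Fun(A,\Q))\to\coCart_\kappa(A)$ becomes, after passing to maximal groupoids, a functor $k(\Gamma)$ that is essentially surjective and fully faithful, which is exactly what an equivalence of groupoids is.

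Essential surjectivity is formal, and I would read it off from the universal property of the universal coCartesian fibration $\quniv\colon\Q_\bullet\to\Q$: by definition every object of $\coCart_\kappa(A)$ is isomorphic in $\coCart(A)$ to a $\kappa$-small coCartesian fibration $X\to A$, and such an $X$ carries a classifying functor $F\colon A\to\Q$ together with a pullback square identifying $X$ with the coCartesian fibration classified by $F$, which is the value $\Gamma(F)$. Hence every isomorphism class of objects is hit.

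Full faithfulness is where the genuine content enters, but nearly all of it is already available. Fix $F,G\colon A\to\Q$ and let $X,Y\to A$ be the coCartesian fibrations they classify. An isomorphism $F\to G$ in $ho(\Fun(A,\Q))$ is a connected component of the \(\infty\)-groupoid $k(A,\Q)(F,G)$ of natural equivalences, an isomorphism $X\to Y$ in $\coCart_\kappa(A)$ is a connected component of $\equivfib_A(X,Y)$, and under these identifications the map that $k(\Gamma)$ induces on isomorphism sets is exactly $\pi_0$ applied to the comparison map
\[
k(A,\Q)(F,G)\longrightarrow\equivfib_A(X,Y),
\]
compatibility with $\Gamma$ on all (not necessarily invertible) morphisms being recorded by the commutative square relating this comparison map to $\Fun(A,\Q)(F,G)\to\Map_A^\sharp(X,Y)$. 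The preceding Corollary tells us that comparison map is an equivalence of \(\infty\)-groupoids, hence a bijection on $\pi_0$, so $k(\Gamma)$ is fully faithful; combined with essential surjectivity this yields the equivalence of groupoids.

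The substantial input is thus entirely imported — it is the preceding Corollary, which itself rests on univalence and ultimately on Theorem~\ref{extension} — so I do not anticipate a real obstacle. The only thing to be careful about is the homotopy-category bookkeeping: that $\Map_A^\sharp(X,Y)$ genuinely computes $\mathrm{Hom}_{\coCart_\kappa(A)}(X,Y)$ on $\pi_0$ and that $\equivfib_A(X,Y)\subset\Map_A^\sharp(X,Y)$ cuts out precisely the invertible such morphisms; both are built into how $\Map_A^\sharp$ and $\equivfib_A$ were set up earlier, so this is a mild subtlety rather than a difficulty.
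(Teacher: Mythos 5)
Your argument is correct and follows the same route as the paper: essential surjectivity is immediate from the existence of classifying functors, and full faithfulness is obtained by applying $\pi_0$ to the equivalence $k(A,\Q)(F,G)\to\equivfib_A(X,Y)$ from the preceding corollary. The extra bookkeeping you spell out (identifying isomorphism sets with $\pi_0$ of the respective $\infty$-groupoids and compatibility with the Grothendieck construction on morphisms) is left implicit in the paper but is exactly the intended content.
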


\begin{proof}
It is clear that the Grothendieck construction is essentially surjective. Since
\[
k(A,\Q)(F,G)\to \equivfib_A(X,Y)
\]
is an equivalence of \(\infty\)-groupoids, taking \(\pi_0\) implies that it is also fully faithful.
\end{proof}


\section{Directed univalence}

\begin{sub}
Univalence relates internally defined equivalences of functors \(A\to \Q=\Q_\kappa\) with externally defined equivalence of coCartesian fibrations over \(A\). A directed version of univalence then should relate internally defined morphisms in \(\Fun(A,\Q)\) with externally defined morphisms of coCartesian fibrations over \(A\). The key ingredient here is the Grothendieck construction.
\end{sub}

\begin{proposition}\label{grothendieckcat}
The Grothendieck construction
\[
ho(\Fun(A,\Q))\to \coCart_\kappa(A)
\]
is an equivalence of categories.
\end{proposition}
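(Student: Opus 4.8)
The plan is to upgrade Corollary~\ref{grothendieckgroupoid} from an equivalence of maximal groupoids to an equivalence of categories by showing that the Grothendieck construction is fully faithful, since essential surjectivity is already clear (it was used in the proof of Corollary~\ref{grothendieckgroupoid}, and it follows from the classifying-map property of \(\quniv\)). Fully faithfulness amounts to showing that for any pair of coCartesian fibrations \(X\to A\) and \(Y\to A\), classified by \(F,G\colon A\to\Q\), the comparison map
\[
\Fun(A,\Q)(F,G)\to\Map_A^\sharp(X,Y)
\]
induced on mapping spaces is an equivalence of \(\infty\)-groupoids; passing to \(\pi_0\) then gives bijectivity on Hom-sets of \(ho(\Fun(A,\Q))\) and \(\coCart_\kappa(A)\). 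So the whole statement reduces to identifying internally defined morphisms in \(\Fun(A,\Q)\) with externally defined maps of coCartesian fibrations over \(A\), up to homotopy.

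First I would assemble, as in the section on the Grothendieck construction, the commutative square relating \(k(A,\Q)(F,G)\), \(\equivfib^\sharp_A(X,Y)\), \(\Fun(A,\Q)(F,G)\) and \(\Map_A^\sharp(X,Y)\), obtained by taking the fiber at \((F,G)\) of the diagram of functor categories built from \(h(\Delta^1,\Q)\subset\Fun(\Delta^1,\Q)\) and \(\eqclass\subset\morphclass\) over \(\Q\times\Q\). The key external input is the Quillen equivalence \(\lambda\dashv\rho\) of~\ref{grothendieckquillen} (from \cite[Theorem~5.2]{cocartesiankim}): this is precisely what lets one compute, on the level of homotopy categories, that maps of coCartesian fibrations over \(A\) correspond to maps \(W\to A^\sharp\times B^\sharp\) via the \(\Map\)-construction, and it is what makes the Grothendieck-on-morphisms assignment \(\pi_0\bigl(\Fun(A,\Q)(F,G)\bigr)\to\pi_0\bigl(\Map_A^\sharp(X,Y)\bigr)\) functorial (this functoriality was established just above). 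Then I would argue that the map
\[
\Fun(A,\morphclass)\to\Fun(A,\Q)\times\Fun(A,\Q)
\]
is a Joyal fibration with target an \(\infty\)-category (using Proposition~\ref{internalhomfibration} and Corollary~\ref{bifibration}, exponentiated by \(A\)), so that taking the fiber at \((F,G)\) is homotopically meaningful; the fiber is \(\Map_A^\sharp(X,Y)\) by the pullback square defining it, and \(\Fun(A,\Q)(F,G)\) is the corresponding fiber of \(\Fun(A,\Fun(\Delta^1,\Q))\to\Fun(A,\Q)\times\Fun(A,\Q)\) via the lift~\eqref{comparison}.

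The main obstacle — the real content of directed univalence — is showing that the middle horizontal map \(\Fun(A,\Fun(\Delta^1,\Q))\to\Fun(A,\morphclass)\), i.e. the functor category applied to the lift~\eqref{comparison}, is a Joyal equivalence over \(\Q\times\Q\); equivalently, that \(\Fun(\Delta^1,\Q)\to\morphclass\) is a Joyal equivalence. For the \emph{equivalence} classifier this was the univalence theorem, deduced from the fact that both \(h(\Delta^1,\Q)\to\Q\) and \(\eqclass\to\Q\) are trivial fibrations (Proposition~\ref{targetfibration}); the directed analogue should follow from the corresponding statement that \(\morphclass\to\Q\times\Q\) and \(\Fun(\Delta^1,\Q)\to\Q\times\Q\) have the same fibers up to equivalence, which is where the bifibration property (Corollary~\ref{bifibration}) and the extension theorems of Section~2 (Theorems~\ref{lefthornmorphextension} and~\ref{righthornmorphextension}, which encode that an edge of \(\morphclass\) over a fixed \((F,G)\) is the same datum as an edge of \(\Fun(\Delta^1,\Q)\)) do the work. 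Concretely, I expect one shows \(\morphclass\to\Q\times\Q\) is, fiberwise, the right fibration whose fiber over \((c,d)\) is the space of maps \(c\to d\) in \(\Q\), matching the fiber of \(\Fun(\Delta^1,\Q)\to\Q\times\Q\). Once the middle map of the big diagram is a Joyal equivalence, taking fibers at \((F,G)\) yields that \(\Fun(A,\Q)(F,G)\to\Map_A^\sharp(X,Y)\) is an equivalence of \(\infty\)-groupoids; applying \(\pi_0\) and combining with essential surjectivity and the already-verified functoriality finishes the proof that the Grothendieck construction is an equivalence of categories.
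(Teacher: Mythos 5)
There is a genuine gap, and it is essentially one of circularity relative to what is actually available at this point. Your plan reduces the Proposition to the statement that \(\Fun(A,\Q)(F,G)\to\Map_A^\sharp(X,Y)\) is an equivalence of \(\infty\)-groupoids, i.e.\ to Corollary \ref{mappingspaceequivalence}, which you in turn propose to get from directed univalence (that \(\Fun(\Delta^1,\Q)\to\morphclass\) is an equivalence over \(\Q\times\Q\)). But in the paper both of these are \emph{consequences} of Proposition \ref{grothendieckcat}: Corollary \ref{mappingspaceequivalence} is deduced from it by a functorial-in-\(K\) \(\pi_0\) argument, and Theorem \ref{diruniv} then follows via the bifibration comparison. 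At the stage where the Proposition is proved, the only ingredients in hand are univalence (hence Corollary \ref{grothendieckgroupoid}, the equivalence on maximal groupoids) and the Quillen equivalence of paragraph \ref{grothendieckquillen}. Your sketch of an independent proof of directed univalence does not close this loop: the bifibration property (Corollary \ref{bifibration} together with \cite[Corollary 2.4.7.11]{lurie}) only reduces the question to a fiberwise one, and the fiberwise statement --- that the mapping space \(\Q(c,d)\), defined via the simplicial structure of \(\Q\) (i.e.\ via coCartesian fibrations over simplices), agrees with the \(\infty\)-groupoid \(\Map^\sharp(c,d)\) of functors \(c\to d\) --- is precisely the hard content; saying ``I expect one shows \(\morphclass\to\Q\times\Q\) is, fiberwise, the space of maps \(c\to d\)'' assumes exactly what must be proved, and the extension theorems of Section 2 do not provide it (they only yield the bifibration property).

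The paper's actual argument is different and avoids this: essential surjectivity is clear, and full faithfulness is proved \(1\)-categorically using the Quillen equivalence \(\Fun(I,\mSet/A^\sharp)\simeq\mSet/I^\sharp\times A^\sharp\). For fullness, a map \(f\colon X^\natural\to Y^\natural\) over \(A^\sharp\) is presented, via this equivalence with \(I=[1]\), by a coCartesian fibration \(W^\natural\to(\Delta^1)^\sharp\times A^\sharp\) together with coCartesian equivalences \(u\colon W_0^\natural\to X^\natural\) and \(v\colon W_1^\natural\to Y^\natural\) fitting in a commutative square with the canonical map \(\varphi\colon W_0^\natural\to W_1^\natural\); the zig-zag legs \(u,v\) are invertible in \(\coCart_\kappa(A)\) and lie in the image by Corollary \ref{grothendieckgroupoid} (which rests only on univalence), so \(f=v\varphi u^{-1}\) is in the image. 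Faithfulness uses the same Quillen equivalence over \((\Delta^1)^\sharp\times(\Delta^1)^\sharp\times A^\sharp\) to lift a homotopy between maps of coCartesian fibrations to the internal side. If you want to salvage your route, you would need to first give a proof of the fiberwise comparison \(\Q(c,d)\simeq\Map^\sharp(c,d)\) that does not pass through the Proposition; as the paper stands, the direction of deduction you propose (directed univalence \(\Rightarrow\) Grothendieck construction) is only noted as an a posteriori remark, after the Proposition has been established by the argument above.
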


\begin{proof}
We already know that it is essentially surjective, so we need to show that it is fully faithful. Let us first show that it is full. Let 
\[
\begin{tikzcd}
X^\natural\ar{rr}{f}\drar & & Y^\natural\dlar\\
& A^\sharp &
\end{tikzcd}
\]
be a morphism of coCartesian fibrations. By the Quillen equivalence mentioned in paragraph \ref{grothendieckquillen} there exists a coCartesian fibration
\[
\begin{tikzcd}
	W^\natural\dar{p}\\
	\mcyl \times A^\sharp
\end{tikzcd}
\]
and a natural equivalence
\[
\begin{tikzcd}
	W^\natural_0 \rar{u} \dar[swap]{\varphi} & X^\natural\dar{f}\\
	W^\natural_1 \rar{v} & Y^\natural
\end{tikzcd}
\]
in \(\mSet/A^\sharp\), where \(u\) and \(v\) are coCartesian equivalences. We already know that the Grothendieck construction induces an equivalence of maximal groupoids, Corollary \ref{grothendieckgroupoid}, so \(u\) and \(v\) are in the image of the Grothendieck construction. Therefore \(f\) is the image of \(v\varphi u^{-1}\).

It remains to show faithful. Note that a homotopy of maps in \(\mSet/A^\sharp\) between maps of coCartesian fibrations can be represented by a triangle
\[
\begin{tikzcd}
	X^\natural \times\mcyl \ar{rr}\drar[swap]{(p,1)} & & Y^\natural\times \mcyl\dlar{(q,1)}\\
	& A^\sharp \times \mcyl &
\end{tikzcd}
\]
Under the Quillen equivalence of paragraph \ref{grothendieckquillen} there is a coCartesian fibration 
\[
\begin{tikzcd}
	W^\natural\dar\\
	\mcyl \times\mcyl\times A^\sharp
\end{tikzcd}
\]
which is constant in the first variable and induces the homotopic maps in the second variable.
\end{proof}

\begin{corollary}\label{mappingspaceequivalence}
For any pair of coCartesian fibrations
\[
\begin{tikzcd}
X\drar & & Y\dlar\\
& A & 
\end{tikzcd}
\]
classified by functors \(F,G\colon A\to Q\) respectively, the induced map
\[
\Fun(A,\Q)(F,G)\to \Map^\sharp_A(X,Y)
\]
is an equivalence of \(\infty\)-groupoids.
\end{corollary}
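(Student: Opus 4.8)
\emph{Plan.} Both $\Fun(A,\Q)(F,G)$ and $\Map_A^\sharp(X,Y)$ are $\infty$-groupoids, and the map in the statement is the lower horizontal arrow of the commutative square relating $k(A,\Q)(-,-)$ to $\equivfib_A(-,-)$ constructed above. To prove it is an equivalence it therefore suffices to prove that it is bijective on $\pi_0$ and induces isomorphisms on $\pi_n$ at every basepoint for $n\geq 1$. The first point is exactly fullness and faithfulness of the Grothendieck construction $ho(\Fun(A,\Q))\to\coCart_\kappa(A)$ on hom-sets, that is, Proposition \ref{grothendieckcat} (recall that the Grothendieck construction on morphisms is $\pi_0$ of precisely the map under consideration). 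It remains to treat the higher homotopy groups at an arbitrary vertex.

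So fix a vertex $f\colon F\to G$, with classified morphism of coCartesian fibrations $\bar f\colon X\to Y$ over $A$. Using $\Fun(A\times\Delta^1,\Q)=\Fun(\Delta^1,\Fun(A,\Q))$, view $f$ as an object of $\Fun(A\times\Delta^1,\Q)$, and let $Z\to A\times\Delta^1$ be the coCartesian fibration it classifies, so that $Z_0=X$, $Z_1=Y$, and the associated morphism of coCartesian fibrations over $A$ is $\bar f$. Restriction along $A\times\{0\}$ and $A\times\{1\}$ yields a commutative square
\[
\begin{tikzcd}
k(A\times\Delta^1,\Q)(f,f)\rar\dar & k(A,\Q)(F,F)\times k(A,\Q)(G,G)\dar\\
\equivfib_{A\times\Delta^1}(Z,Z)\rar & \equivfib_A(X,X)\times\equivfib_A(Y,Y)
\end{tikzcd}
\]
whose vertical maps are instances of the comparison map from the square recalled above, applied to $A\times\Delta^1$ with both arguments equal to $f$, and to $A$ with both arguments equal to $F$, respectively to $G$; by the Corollary of the Univalence section these vertical maps are equivalences of $\infty$-groupoids. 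The homotopy fiber of the top horizontal map over $(\mathrm{id}_F,\mathrm{id}_G)$ is the based loop space $\Omega_f\Fun(A,\Q)(F,G)$ --- the endpoint-fixing self-homotopies of the arrow $f$ being the loops of $\Fun(A,\Q)(F,G)$ at $f$ --- and, by the analogous statement on the side of coCartesian fibrations, the homotopy fiber of the bottom horizontal map over $(\mathrm{id}_X,\mathrm{id}_Y)$ is $\Omega_{\bar f}\Map_A^\sharp(X,Y)$. Since the vertical maps are equivalences, the induced map on homotopy fibers $\Omega_f\Fun(A,\Q)(F,G)\to\Omega_{\bar f}\Map_A^\sharp(X,Y)$ is an equivalence, which gives the required isomorphisms on $\pi_n$ for $n\geq 1$ at $f$. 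Together with the first paragraph this proves the Corollary.

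The step that needs the most care is the ``downstairs'' identification of $\Omega_{\bar f}\Map_A^\sharp(X,Y)$ with the homotopy fiber of the restriction map $\equivfib_{A\times\Delta^1}(Z,Z)\to\equivfib_A(X,X)\times\equivfib_A(Y,Y)$: it amounts to knowing that the Kan complexes $\Map_A^\sharp(-,-)$ carry the arrow-category calculus of an honest $\infty$-category, i.e. that a morphism of coCartesian fibrations over $A$, the space of its self-homotopies, and its rectification to a coCartesian fibration over $A\times\Delta^1$ fit together as mapping objects. The ingredients are already available --- coCartesian fibrations over $\Delta^n\times A$ and over $\mcyl\times A$ model the simplices and the homotopies of $\Map_A^\sharp(-,-)$, and the rectification comes from the Quillen equivalence of paragraph \ref{grothendieckquillen} applied with the free-walking arrow $\Delta^1$ in place of its ``$A$'' --- but this identification, together with the compatibility of the square above with the restriction maps (naturality of the Grothendieck construction under pullback of coCartesian fibrations), must be spelled out; it is routine but not purely formal. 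One auxiliary fact used along the way is that the induced map on maximal $\infty$-groupoids of an isofibration of $\infty$-categories is a Kan fibration, which holds since equivalences are coCartesian over an isofibration by Lemma \ref{cocartovereq}.
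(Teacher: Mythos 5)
Your overall scheme (bijectivity on $\pi_0$ from Proposition \ref{grothendieckcat}, plus isomorphisms on higher homotopy groups by a looping argument that feeds the univalence corollary applied over $A\times\Delta^1$ into a square of restriction maps) is a genuinely different route from the paper's, but it has a real gap at exactly the point you flag and then wave off as ``routine'': the identification of the homotopy fiber of
$\equivfib_{A\times\Delta^1}(Z,Z)\to\equivfib_A(X,X)\times\equivfib_A(Y,Y)$
over $(\mathrm{id}_X,\mathrm{id}_Y)$ with $\Omega_{\bar f}\Map^\sharp_A(X,Y)$. Upstairs the analogous statement is a general fact about arrow categories of an $\infty$-category, applied to $\Fun(A,\Q)$; downstairs there is, at this stage of the paper, no $\infty$-category of coCartesian fibrations over $A$ whose mapping spaces are the $\Map^\sharp_A(-,-)$ and whose arrow category is modelled by coCartesian fibrations over $A\times\Delta^1$ --- that is precisely what straightening (proved later, and using this very corollary via Theorem \ref{diruniv}) would give, so it cannot be cited. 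Producing the identification from the available tools (the Quillen equivalence of \ref{grothendieckquillen} with $I=[1]$) is a rectification argument of essentially the same weight as the proof of Proposition \ref{grothendieckcat} itself, not a formality. Moreover, even granted the identification abstractly, your conclusion requires that the induced map on homotopy fibers be homotopic to $\Omega_f$ of the comparison map $\Fun(A,\Q)(F,G)\to\Map^\sharp_A(X,Y)$ (based at $f$ and $\bar f$); without that compatibility you only obtain abstract isomorphisms $\pi_n(\Fun(A,\Q)(F,G),f)\cong\pi_n(\Map^\sharp_A(X,Y),\bar f)$, which together with a $\pi_0$-bijection does not imply the given map is an equivalence. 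Since the comparison map is itself only defined up to homotopy through a choice of section in \eqref{sectionrect}, this compatibility is an additional nontrivial check that your sketch does not carry out.

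For contrast, the paper avoids all basepoint and loop-space analysis by a Yoneda trick: for every simplicial set $K$ one has the strict isomorphisms $\Fun(K,\Fun(A,\Q)(F,G))\cong\Fun(K\times A,\Q)(F_K,G_K)$ and $\Fun(K,\Map^\sharp_A(X,Y))\cong\Map^\sharp_{K\times A}(K\times X,K\times Y)$, and Proposition \ref{grothendieckcat} applied over $K\times A$ (whose action on morphisms is by definition $\pi_0$ of the comparison map) gives a bijection on $\pi_0\Fun(K,-)$ natural in $K$; a map of $\infty$-groupoids inducing such a natural bijection for all $K$ is an equivalence. If you want to salvage your route, the downstairs fiber identification and its compatibility with the comparison map must be spelled out via \ref{grothendieckquillen}; as it stands, the proposal is incomplete.
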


\begin{proof}
Let \(K\) be a simplicial set and denote 
\[
F_K:= K\times A\to K \xrightarrow{F} \Q
\]
We then have
\[
\Fun(K,\Fun(A,\Q)(F,G))\simeq \Fun(K\times A,\Q)(F_K,G_K)
\]
On the other hand we have
\[
\Fun(K,\Map_A^\sharp(X,Y))\simeq \Map_{K\times A}^\sharp(K\times X, K\times Y)
\]
Thus Proposition \ref{grothendieckcat} provides an isomorphism
\[
\pi_0\Fun(K,\Map_A^\sharp(X,Y))\simeq \pi_0\Map_{K\times A}^\sharp(K\times X, K\times Y)
\]
functorial in all \(K\) which implies the assertion.
\end{proof}

\begin{corollary}\label{cocompleteq}
The \(\infty\)-category \(\Q\) is (\(\kappa\)-small) cocomplete.
\end{corollary}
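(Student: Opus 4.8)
The plan is to deduce cocompleteness of $\Q$ from the universal property just established, namely that for every (small) simplicial set $A$ the Grothendieck construction gives an equivalence of categories $ho(\Fun(A,\Q))\simeq \coCart_\kappa(A)$, and moreover (Corollary \ref{mappingspaceequivalence}) the comparison refines to an equivalence of mapping $\infty$-groupoids $\Fun(A,\Q)(F,G)\xrightarrow{\ \sim\ }\Map^\sharp_A(X,Y)$. First I would recall the standard criterion: an $\infty$-category $\Q$ is $\kappa$-small cocomplete if and only if every functor $F\colon A\to\Q$ from a $\kappa$-small simplicial set $A$ admits a colimit, and this in turn holds provided the constant-diagram functor $\Q\to\Fun(A,\Q)$ admits a left adjoint, or—what is cheaper to check—provided that for each $F$ the presheaf $q\mapsto \Fun(A,\Q)(F,\mathrm{const}_q)$ on $\Q$ is representable. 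So the task reduces to representing this presheaf.

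The key step is to transport the question across the Grothendieck equivalence. Under the equivalence $\Fun(A,\Q)(F,G)\simeq \Map^\sharp_A(X,Y)$, where $X\to A$ is the coCartesian fibration classified by $F$, a constant diagram $\mathrm{const}_q\colon A\to\Q$ corresponds to the coCartesian fibration $A\times S_q\to A$, where $S_q$ is the $\kappa$-small $\infty$-category named by $q\in\Q$ (the fiber of $\quniv$ over $q$). Thus I must identify the $\infty$-groupoid $\Map^\sharp_A(X, A\times S_q)$ of maps of coCartesian fibrations over $A$ from $X$ to the constant family $A\times S_q$. But a map of coCartesian fibrations $X\to A\times S_q$ over $A$ is the same datum as a map $X\to S_q$ of marked simplicial sets sending coCartesian edges to marked (i.e. equivalence) edges, equivalently a functor of $\infty$-categories $X\to S_q$ that carries $p$-coCartesian edges to equivalences—that is, a functor out of the localization $X[W^{-1}_{\mathrm{coCart}}]$ of the total space $X$ at its coCartesian edges. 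Since $S_q$ ranges over all $\kappa$-small $\infty$-categories (as $q$ ranges over $\Q=\Q_\kappa$), representability of $q\mapsto \Map^\sharp_A(X,A\times S_q)$ amounts exactly to the statement that the localization $X[W^{-1}_{\mathrm{coCart}}]$ exists as a $\kappa$-small $\infty$-category; its image in $\Q$ is then the colimit of $F$.

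Concretely I would carry this out as follows. (1) Form the marked simplicial set $X^\natural$ and factor the map $X^\natural\to (\Delta^0)^\sharp$ in the coCartesian model structure over the point as a marked left anodyne map $X^\natural\to L^\natural$ followed by a marked left fibration $L^\natural\to(\Delta^0)^\sharp$, keeping $L$ $\kappa$-small by the small object argument (here one uses that $A$ and the fibers of $X$ are $\kappa$-small, so $X$ is $\kappa$-small, and $\kappa$ is inaccessible); the underlying $\infty$-category $L$ is the desired localization, and it is $\kappa$-small. (2) Let $q_L\in\Q$ be a vertex naming $L$, so that $L\simeq S_{q_L}$. (3) Check that for every $q$ the canonical map $\Q(q_L,q)\to \Map^\sharp_A(X,A\times S_q)\simeq\Fun(A,\Q)(F,\mathrm{const}_q)$ induced by $X^\natural\to L^\natural\simeq A\times S_{q_L}$ (the second map being the composite $X^\natural\to L^\natural$ followed by the structure map, viewed over $A$) is an equivalence of $\infty$-groupoids; this is where one uses that $X^\natural\to L^\natural$ is a coCartesian equivalence over the point hence, after the base-change bookkeeping, a coCartesian equivalence over $A$ between constant-in-appropriate-sense families, together with Corollary \ref{mappingspaceequivalence} to pass between the external $\Map^\sharp$ and the internal mapping space. (4) Conclude that $q_L$ corepresents $\Fun(A,\Q)(F,-\circ\mathrm{const})$, i.e. is the colimit of $F$; varying $F$ and $A$ over all $\kappa$-small diagrams gives $\kappa$-small cocompleteness.

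The main obstacle I anticipate is step (3): making precise and correct the compatibility between the localization performed fiberwise/over the point and the mapping-space equivalence, which lives over $A$. One has to be careful that the object $A\times S_q\to A$ is genuinely the coCartesian fibration classified by $\mathrm{const}_q$ and that the adjunction $\Map^\sharp_A(X,A\times(-))\simeq \Map^\sharp_{\Delta^0}(X,-)\cong \Map^\sharp_{\Delta^0}(L,-)$ is natural in $q$ in a way compatible with composition (so that we really get corepresentability as a functor, not just a levelwise bijection); this is precisely the kind of naturality that Corollary \ref{mappingspaceequivalence} was engineered to supply, by testing against all $K$. A secondary, purely size-theoretic, point to get right is that the factorization in step (1) can be arranged $\kappa$-small, which follows from the small object argument applied to the (countably generated, up to size) generating marked left anodyne maps as in Corollary \ref{eqextprop} and Remark \ref{remark:dropsize}.
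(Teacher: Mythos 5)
Your route is genuinely different from the paper's, and its conceptual core is sound: the paper's own remark after the corollary confirms that the colimit of $F\colon A\to\Q$ is obtained by inverting the coCartesian edges of the total space, and the localization $L$ you build in step (1) is exactly what the paper's construction produces. But the paper never verifies corepresentability pointwise. Instead it takes the derived adjunction $\mathbf L\pi_!\colon\coCart_\kappa(I\times A)\rightleftarrows\coCart_\kappa(A):\mathbf R\pi^*$ induced by $\pi\colon I\to\Delta^0$, transports it through Proposition \ref{grothendieckcat} to an adjunction between $ho(\Fun(A,\Fun(I,\Q)))$ and $ho(\Fun(A,\Q))$ \emph{functorially in $A$}, and then applies \cite[Theorem 6.1.23]{cisinskibook} to conclude that the constant-diagram functor $\Q\to\Fun(I,\Q)$ has a left adjoint. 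The whole point of that formulation is that it only requires $1$-categorical (homotopy-category) data natural in $A$, which Proposition \ref{grothendieckcat} supplies verbatim, so no compatibility of mapping-space equivalences with composition ever has to be checked.

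That compatibility is precisely where your proposal has a genuine gap, and you have located it yourself in step (3), but Corollary \ref{mappingspaceequivalence} does not ``supply'' it. Corepresentability of $q\mapsto\Fun(A,\Q)(F,\mathrm{const}_q)$ requires a universal cocone $\tilde\eta\in\Fun(A,\Q)(F,\mathrm{const}_{q_L})$ such that composition with $\tilde\eta$ (after applying the constant-diagram functor on mapping spaces) induces equivalences $\Q(q_L,q)\to\Fun(A,\Q)(F,\mathrm{const}_q)$ for all $q$. You want to verify this after transporting along the equivalences of Corollary \ref{mappingspaceequivalence}, where the corresponding map is strict precomposition with $X^\natural\to A^\sharp\times L^\natural$ and is an equivalence by the marked-anodyne/marked-left-fibration argument. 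For that transport to be legitimate you need the square intertwining composition with $\tilde\eta$ inside the quasi-category $\Fun(A,\Q)$ with precomposition inside $\Map^\sharp_A$ (together with the identification of $\mathrm{const}_q$ with $A\times S_q$) to commute up to homotopy. Corollary \ref{mappingspaceequivalence} is an objectwise statement about individual mapping spaces, proved by a $\pi_0\Fun(K,-)$ comparison; it asserts nothing about compatibility with composition. One can hope to extract such compatibility from the functoriality of the Grothendieck construction (Proposition \ref{grothendieckcat}) applied over $K\times A$ for all $K$, and then upgrade the resulting $\pi_0$-statements to an actual homotopy between the two maps of spaces, but that is an additional argument of real substance which your proposal does not carry out---and it is exactly the pointwise verification the paper's adjunction criterion is engineered to bypass. (Your size bookkeeping in steps (1)--(2) is fine and matches the paper's use of Remark \ref{remark:dropsize}.)
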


\begin{proof}
Let \(I\) be a \(\kappa\)-small \(\infty\)-category. The canonical map \(\pi\colon I\to \Delta^0\) induces an adjunction
\[
\mathbf L\pi_!: \coCart_\kappa(I\times A) \rightleftarrows \coCart_\kappa(A) : \mathbf R\pi^*
\]
By Proposition \ref{grothendieckcat}, this induces an adjunction
\[
ho (\Fun(A, \Fun(I,\Q))) \rightleftarrows ho (\Fun(A,\Q))
\]
functorial in \(A\). This implies by \cite[Theorem 6.1.23]{cisinskibook} that the constant functor
\[
\Q\to \Fun(I,\Q)
\]
has a left adjoint, hence \(\Q\) has colimits of shape \(I\).
\end{proof}

\begin{remark}
The previous Corollary shows that the colimit of a functor \(I\to \Q\) is computed by inverting the coCartesian edges in the total space of the corresponding coCartesian fibration.
\end{remark}

\begin{theorem}[directed univalence]\label{diruniv}
The map \eqref{comparison}
\[
\begin{tikzcd}
	\Fun(\Delta^1,\Q)\ar{rr}\drar & & \morphclass\dlar\\
	& \Q \times \Q &
\end{tikzcd}
\]
is an equivalence of \(\infty\)-categories over \(\Q \times \Q\).
\end{theorem}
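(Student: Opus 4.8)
The plan is to show that the map $\phi$ of \eqref{comparison} is a categorical equivalence; since $\phi$ is a morphism over $\Q\times\Q$ between two isofibrations over $\Q\times\Q$, this yields the statement. That $(s,t)\colon\morphclass\to\Q\times\Q$ is an isofibration is Proposition~\ref{internalhomfibration}, applied to the two coCartesian fibrations over $\Q\times\Q$ obtained from $\quniv$ along the two projections; that $(\mathit{ev}_0,\mathit{ev}_1)\colon\Fun(\Delta^1,\Q)\to\Q\times\Q$ is an isofibration follows from $\Q$ being an $\infty$-category and $\partial\Delta^1\to\Delta^1$ a cofibration. A categorical equivalence between isofibrations over $\Q\times\Q$ is an equivalence in the Joyal model structure on simplicial sets over $\Q\times\Q$, hence admits an inverse up to homotopy over $\Q\times\Q$. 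The key input is this: for every simplicial set $A$ and every pair $F,G\colon A\to\Q$ classifying coCartesian fibrations $X,Y\to A$, the fiber of $\Fun(A,\phi)$ over $(F,G)$ is precisely the comparison map $\Fun(A,\Q)(F,G)\to\Map^\sharp_A(X,Y)$ --- this is how that map is built --- and it is an equivalence of $\infty$-groupoids by Corollary~\ref{mappingspaceequivalence}. It remains to deduce from this that $\phi$ is essentially surjective and fully faithful.

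For essential surjectivity, take $A=\Delta^0$. For $x,y\in\Q$ classifying $\kappa$-small $\infty$-categories $\mathcal X,\mathcal Y$, the fiber of $\morphclass\to\Q\times\Q$ over $(x,y)$ is $\Map^\sharp_{\Delta^0}(\mathcal X,\mathcal Y)$ (by the pullback square defining $\Map^\sharp$), the fiber of $\Fun(\Delta^1,\Q)\to\Q\times\Q$ over $(x,y)$ is $\Q(x,y)=\Fun(\Delta^0,\Q)(x,y)$, and the map between these induced by $\phi$ is the equivalence of Corollary~\ref{mappingspaceequivalence}, which is in particular surjective on $\pi_0$. Since these fibers are $\infty$-groupoids, every object of $\morphclass$ is thereby equivalent, in its fiber and hence in $\morphclass$, to $\phi$ of some edge of $\Q$.

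For full faithfulness, take $A=\Delta^1$. For objects $F,G$ of $\Fun(\Delta^1,\Q)$, that is, edges of $\Q$, classifying coCartesian fibrations $X,Y\to\Delta^1$, Corollary~\ref{mappingspaceequivalence} gives an equivalence $\Fun(\Delta^1,\Q)(F,G)\xrightarrow{\ \sim\ }\Map^\sharp_{\Delta^1}(X,Y)$. The target is the $\infty$-groupoid of coCartesian-edge-preserving maps $X\to Y$ over $\Delta^1$; by the universal property of the universal morphism classifier, such a map is a $1$-simplex of $\morphclass$, and its source and target are the transport functors $\phi(F)$ and $\phi(G)$ encoded by $X$ and $Y$. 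This identifies $\Map^\sharp_{\Delta^1}(X,Y)$ with the mapping $\infty$-groupoid $\morphclass(\phi(F),\phi(G))$, compatibly with the comparison map, so that $\phi$ induces an equivalence $\Fun(\Delta^1,\Q)(F,G)\xrightarrow{\ \sim\ }\morphclass(\phi(F),\phi(G))$. Together with essential surjectivity, this shows $\phi$ is a categorical equivalence, hence an equivalence over $\Q\times\Q$.

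The step demanding the most care is the identification in the previous paragraph: that the $\infty$-groupoid of coCartesian-edge-preserving maps $X\to Y$ over $\Delta^1$ is, compatibly with the comparison map of Corollary~\ref{mappingspaceequivalence}, the mapping $\infty$-groupoid of $\morphclass$ from $\phi(F)$ to $\phi(G)$. This is a manifestation of the fact that a coCartesian fibration over $\Delta^1\times\Delta^1$ is the same datum as a commutative square of transport functors, used so as to exchange the roles of the two factors; making it precise amounts to unwinding the construction of $\phi$ from the section \eqref{sectionrect}, the pullback square presenting $\Map^\sharp_{\Delta^1}(X,Y)$, and the description of mapping $\infty$-groupoids of $\morphclass$ via the isofibration $(s,t)$. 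Everything else is routine bookkeeping.
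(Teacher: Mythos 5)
Your reduction to essential surjectivity plus full faithfulness is legitimate in principle, and the essential surjectivity step (fibers over vertices, Corollary \ref{mappingspaceequivalence} with \(A=\Delta^0\), equivalences in a fiber being equivalences in the total category) is fine. The gap is in the full faithfulness step, precisely at the point you yourself flag as ``unwinding'': the asserted identification \(\Map^\sharp_{\Delta^1}(X,Y)\simeq\morphclass(\phi(F),\phi(G))\), compatible with the comparison maps, is not a consequence of the universal property of \(\morphclass\). By that universal property, a coCartesian-edge-preserving map \(X\to Y\) over \(\Delta^1\) is a \(1\)-simplex of \(\morphclass\) lying over the edge \((F,G)\colon\Delta^1\to\Q\times\Q\), whose source is the fiber component \(X_0\to Y_0\) and whose target is \(X_1\to Y_1\); it is \emph{not} an edge from \(\phi(F)\) (the transport functor of \(X\)) to \(\phi(G)\) (the transport functor of \(Y\)). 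An edge of \(\morphclass\) from \(\phi(F)\) to \(\phi(G)\) instead lies over an edge of \(\Q\times\Q\) classifying coCartesian fibrations in the \emph{transverse} direction, with a coCartesian map between those restricting to the two transport functors at the endpoints. Passing between these two descriptions is an exchange-of-directions statement for coCartesian data over \(\Delta^1\times\Delta^1\); it is genuinely nontrivial (and must moreover be made compatible with the map of Corollary \ref{mappingspaceequivalence} and with \(\phi\), which is itself only defined through a contractible choice of section in \eqref{sectionrect}). As written, this step carries essentially the full content of the theorem, so it cannot be left as bookkeeping.

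This is exactly where the paper's proof brings in structure you never use: both \(\Fun(\Delta^1,\Q)\to\Q\times\Q\) (by \cite[Corollary 2.4.7.11]{lurie}) and \(\morphclass\to\Q\times\Q\) (Corollary \ref{bifibration}, resting on the outer-horn extension Theorems \ref{lefthornmorphextension} and \ref{righthornmorphextension}) are \emph{bifibrations}, and a map of bifibrations over a product which is an equivalence on fibers over vertices is an equivalence \cite[Proposition 2.4.7.7]{lurie}; the fiberwise input is then just Corollary \ref{mappingspaceequivalence} with \(A=\Delta^0\). Note that being an isofibration and a fiberwise equivalence over vertices, which is all your argument actually establishes before the disputed identification, is not sufficient in general; the bifibration property (or an honest proof of your transposition claim, which amounts to the same kind of argument) is what makes the passage from fibers to mapping spaces work. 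To repair your proof, either invoke the bifibration criterion as the paper does, or supply a genuine construction and proof of the equivalence \(\Map^\sharp_{\Delta^1}(X,Y)\simeq\morphclass(\phi(F),\phi(G))\) naturally in \((F,G)\), e.g.\ via coCartesian fibrations over \(\Delta^1\times\Delta^1\) and the extension results of Section 2 --- at which point you will have reproduced the substance of the paper's argument.
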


\begin{proof}
By \cite[Corollary 2.4.7.11]{lurie} the map 
\[
\Fun(\Delta^1,\Q)\to \Q\times \Q
\]
is a bifibration and by Corollary \ref{bifibration} the map
\[
\morphclass \to \Q\times \Q
\]
is a bifibration. By \cite[Proposition 2.4.7.7]{lurie} it is therefore enough to show that we have a fiberwise equivalence. The map on fibers is precisely the map of Corollary \ref{mappingspaceequivalence} with \(A=\Delta^0\).
\end{proof}

\begin{remark}\label{dirunivleft}
The same proof also shows that the universal left fibration is directed univalent.
\end{remark}

\begin{sub}
Suppose \(p\colon E\to B\) is a coCartesian fibration with small fibers. We have a pullback square
\[
\begin{tikzcd}
\morphclassi_{B\times B}(E^0,E^1)\rar \dar & \morphclass\dar\\
B\times B \rar & \Q \times \Q
\end{tikzcd}
\]
hence the comparison map
\[
\begin{tikzcd}
{} & \morphclass\dar\\
\Fun(\Delta^1,\Q)\rar \urar[dashed] & \Q\times \Q
\end{tikzcd}
\]
induces a map
\[
\begin{tikzcd}[column sep=small]
\Fun(\Delta^1,B)\drar[dashed]\ar{rr}\ar[bend right]{ddr} & & \Fun(\Delta^1,\Q)\drar\ar[bend right]{ddr} & \\
 & \morphclassi_{B\times B}(E^0,E^1)\dar \ar{rr} & & \morphclass\dar\\
 & B\times B \ar{rr} & & \Q\times \Q
\end{tikzcd}
\]
Thus, for any coCartesian fibration with small fibers a comparison map
\[
\begin{tikzcd}
{} & \morphclassi_{B\times B}(E^0,E^1)\dar\\
\Fun(\Delta^1,B)\rar \urar[dashed] & B\times B
\end{tikzcd}
\]
\end{sub}

\begin{definition}\label{def:directed univalent}
A coCartesian fibration \(p\colon E\to B\) with small fibers is called \emph{directed univalent} if the above comparison map is an equivalence of \(\infty\)-categories
\[\Fun(\Delta^1,B)\cong\morphclassi_{B\times B}(E^0,E^1)\, .\]
\end{definition}

\begin{theorem}\label{thm:directedunivfull}
Let \(p\colon E\to B\) be a coCartesian fibration of \(\infty\)-categories with small fibers. Then \(p\) is directed univalent if and only if its classifying
functor $B\to\Q$ is fully faithful.
\end{theorem}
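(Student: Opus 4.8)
The plan is to reduce the statement to the universal case, Theorem~\ref{diruniv}, by base change along the classifying functor $F\colon B\to\Q$. Write $j\colon\morphclassi_{B\times B}(E^0,E^1)\to B\times B$ for the bifibration obtained from $p$ (it is a bifibration by Corollary~\ref{bifibration} together with the pullback square defining $\morphclassi_{B\times B}(E^0,E^1)$, since bifibrations are stable under base change), and recall that by construction we have a pullback square
\[
\begin{tikzcd}
\morphclassi_{B\times B}(E^0,E^1)\rar \dar & \morphclass\dar\\
B\times B \rar{F\times F} & \Q \times \Q
\end{tikzcd}
\]
Both vertical maps are bifibrations, so the induced comparison map $c_E\colon\Fun(\Delta^1,B)\to\morphclassi_{B\times B}(E^0,E^1)$ over $B\times B$ is, by \cite[Proposition 2.4.7.7]{lurie}, an equivalence if and only if it is a fiberwise equivalence. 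Over a pair of objects $(x,y)$ of $B$, the fiber of $j$ is the fiber of $\morphclass\to\Q\times\Q$ over $(F(x),F(y))$, which by directed univalence of $\quniv$ (Theorem~\ref{diruniv} and the fiberwise analysis in its proof) is identified with $\Map_{\Q}(F(x),F(y))$; and the fiber of $\Fun(\Delta^1,B)\to B\times B$ over $(x,y)$ is $\Map_B(x,y)$. One checks that under these identifications the fiber of $c_E$ over $(x,y)$ is precisely the map on mapping spaces
\[
\Map_B(x,y)\to\Map_{\Q}(F(x),F(y))
\]
induced by $F$. Hence $c_E$ is an equivalence over $B\times B$ if and only if $F$ induces an equivalence on all mapping spaces, i.e.\ if and only if $F$ is fully faithful.

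To carry this out, the first step is to make the identification of the fiber of $c_E$ with the $F$-induced map on mapping spaces genuinely precise: one must check that the comparison lift of diagram~\eqref{comparison}, pulled back along $F\times F$, agrees with the comparison lift built directly from $p$ in paragraph before Definition~\ref{def:directed univalent}, which is a diagram chase using that both are produced by the same recipe (choosing a section of the trivial fibration~\eqref{sectionrect}) and that the homotopy class of such a lift is well-defined by Proposition~\ref{homotopyindependence}. The second step is to recall, or reprove in the present notation, that for a functor of $\infty$-categories $F$, being fully faithful is equivalent to inducing equivalences on all mapping $\infty$-groupoids $\Map_B(x,y)\to\Map_\Q(F(x),F(y))$ for all objects $x,y$; this is standard, and the mapping spaces here are exactly the fibers of the relevant $\Fun(\Delta^1,-)\to(-)\times(-)$ and $\morphclass\to\Q\times\Q$ over object-pairs, which are the ones appearing in Theorem~\ref{diruniv} and Corollary~\ref{mappingspaceequivalence}. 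The third step is the bifibration argument itself: $c_E$ is a map over $B\times B$ between two maps that are bifibrations over $B\times B$ (the source because $\Fun(\Delta^1,B)\to B\times B$ is a bifibration by \cite[Corollary 2.4.7.11]{lurie}, the target by Corollary~\ref{bifibration} and stability of bifibrations under pullback), and the target is an $\infty$-category, so \cite[Proposition 2.4.7.7]{lurie} reduces an equivalence over $B\times B$ to a fiberwise equivalence.

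I expect the main obstacle to be the coherence bookkeeping in the first step: verifying that the square
\[
\begin{tikzcd}
\Fun(\Delta^1,B)\rar \dar & \Fun(\Delta^1,\Q)\dar\\
\morphclassi_{B\times B}(E^0,E^1)\rar & \morphclass
\end{tikzcd}
\]
commutes (not merely up to homotopy over $B\times B$, which is all Proposition~\ref{homotopyindependence} guarantees a priori) so that pulling back the universal comparison map indeed computes $c_E$. The cleanest route is probably to observe that the coCartesian fibration over $\Delta^1\times B$ obtained from the evaluation $\Delta^1\times\Fun(\Delta^1,\Q)\to\Q$ restricts, along $F$, to the one built from $E\to B$, so a single choice of section of~\eqref{sectionrect} on the universal side restricts to a valid choice on the $E$-side; by Proposition~\ref{homotopyindependence} any other choice gives a homotopic lift, and homotopic lifts over $B\times B$ are still equivalences over $B\times B$, so the argument is insensitive to this ambiguity. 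Everything after this reduction is formal: the bifibration criterion and the fully-faithfulness-versus-mapping-spaces equivalence are both quoted, and the computation of the fiber of $c_E$ over $(x,y)$ as the map $\Map_B(x,y)\to\Map_\Q(F(x),F(y))$ is then immediate from the corresponding statement in Theorem~\ref{diruniv} base-changed along $(x,y)\colon\Delta^0\to B\times B$.
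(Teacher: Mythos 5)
Your proposal is correct and follows essentially the same route as the paper: reduce to a fiberwise statement via the bifibration criterion of \cite[Proposition 2.4.7.7]{lurie}, identify the fibers of the comparison map with the maps $B(x,y)\to\Q(F(x),F(y))$ using directed univalence of $\quniv$ (Theorem \ref{diruniv}), and conclude by 2-out-of-3. The coherence worry in your first step is not needed, since the comparison map for $p$ is \emph{defined} in the paper as the map induced into the pullback $\morphclassi_{B\times B}(E^0,E^1)$ by the universal lift \eqref{comparison}, so the relevant square commutes strictly by construction.
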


\begin{proof}
Since it is a bifibration,
the map \(\Fun(\Delta^1,B)\rightarrow\morphclassi_{B\times B}(E^0,E^1)\)
is an equivalence of \(\infty\)-categories over \(B\times B\) if and only if it is a fiberwise equivalence. Consider the commutative diagram
\[
\begin{tikzcd}[column sep=small]
\Fun(\Delta^1,B)\drar\ar{rr}\ar[bend right]{ddr} & & \Fun(\Delta^1,\Q)\drar\ar[bend right]{ddr} & \\
 & \morphclassi_{B\times B}(E^0,E^1)\dar \ar{rr} & & \morphclass\dar\\
 & B\times B \ar{rr} & & \Q\times \Q
\end{tikzcd}
\]
Recall that the front square is a pullback square. Taking fibers at a point \((a,b)\colon \Delta^0\to B\times B\) yields the commutative triangle
\[
\begin{tikzcd}
B(a,b) \rar \drar & \Q(a^*E,b^*E)\dar\\
 & \Map^\sharp(a^*E,b^*E)
\end{tikzcd}
\]
Since the universal coCartesian fibration is directed univalent, the right hand vertical map is an equivalence. Thus by 2-out-of-3 \(p\) is directed univalent if and only if its classifying map is fully faithful.
\end{proof}

\begin{sub}
In Remark \ref{dirunivleft} we have already mentioned that the universal left fibration is directed univalent. As a consequence we obtain the following Corollary.
\end{sub}

\begin{corollary}
The classifying map of the universal left fibration \(\spaces\to \Q\) is fully faithful.
\end{corollary}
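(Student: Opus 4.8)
The plan is to deduce this immediately from Theorem~\ref{thm:directedunivfull} together with Remark~\ref{dirunivleft}. Recall that the universal left fibration $\puniv\colon\spaces_\bullet\to\spaces$ is, by construction, the pullback of the universal coCartesian fibration $\quniv\colon\Q_\bullet\to\Q$ along the inclusion $\spaces\hookrightarrow\Q$ of the subobject spanned by left fibrations; in particular this inclusion is (a representative of) the classifying functor of $\puniv$. So it suffices to verify that $\puniv$ meets the hypotheses of Theorem~\ref{thm:directedunivfull} and that it is directed univalent.

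First I would check that $\puniv$ is a coCartesian fibration of $\infty$-categories with small fibers. It is a left fibration, hence a coCartesian fibration all of whose edges are coCartesian; its fibers are small $\infty$-groupoids by the very definition of $\spaces\subset\Q$; its codomain $\spaces$ is an $\infty$-category, being the $\infty$-category of spaces by \cite[Theorem~7.8.9]{cisinskibook}; and its domain $\spaces_\bullet$ is an $\infty$-category since it is a left fibration over one. Thus all the assumptions of Theorem~\ref{thm:directedunivfull} hold.

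Second, by Remark~\ref{dirunivleft} the universal left fibration is directed univalent. Applying Theorem~\ref{thm:directedunivfull} then gives that its classifying functor $\spaces\to\Q$ is fully faithful, which is the assertion. There is essentially no obstacle here: the substantive work has already been done in the proof of directed univalence and in Theorem~\ref{thm:directedunivfull}, and the only point requiring care is the bookkeeping that identifies $\spaces\hookrightarrow\Q$ with the classifying functor of $\puniv$ and confirms that $\spaces$ qualifies as an $\infty$-category with small fibers, both of which are immediate from the definitions and the cited results.
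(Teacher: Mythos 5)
Your proposal is correct and follows exactly the route the paper intends: the corollary is deduced by combining Remark~\ref{dirunivleft} (directed univalence of the universal left fibration) with Theorem~\ref{thm:directedunivfull}, with the inclusion \(\spaces\hookrightarrow\Q\) serving as the classifying functor. The hypothesis checks you spell out are indeed the only bookkeeping needed, so your argument matches the paper's.
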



\section{Straightening/unstraightening as a consequence of directed univalence}

\begin{sub}
Here, we assume that \(\kappa\) is inaccessible.
The goal of this section is to prove that for any simplicial set \(A\), there is an equivalence of \(\infty\)-categories between the functor category \(\Fun(A,\Q_\kappa)\) and the \(\infty\)-categorical localization of the category \(\mSet_\kappa/A^\sharp\) of \(\kappa\)-small simplicial sets over \(A^\sharp\) at the weak
equivalence from Theorem \ref{cocartmodelstructure}.
In particular this implies that the \(\infty\)-category \(\Q_\kappa\) is equivalent to the \(\infty\)-categorical localization of simplicial sets at the Joyal equivalences.
\end{sub}

\begin{sub}
This section makes essential use of the theory of localizations of \(\infty\)-categories as developed in \cite[Chapter 7]{cisinskibook}. Given an \(\infty\)-category \(C\) together with a subcategory of weak equivalences, we will denote by \(L(C)\) the \(\infty\)-categorical localization of \(C\) at the weak equivalences. If \(C\) is an ordinary category with weak equivalences, the 1-categorical localization is obtained as \(C\to ho(L(C))\). A key element will be the following Theorem (we will also write
\(J\) for the nerve of any category \(J\)).
\end{sub}

\begin{theorem}\label{functorlocalization}
Let \(C\) be a cocomplete model category and \(I\) be a small category. Then the
induced comparison functor \(\Fun(I,C)\to\Fun(I,L(C))\)
defines an equivalence of \(\infty\)-categories
\[
L(\Fun(I,C))\xrightarrow{\simeq}\Fun(I,L(C))
\]
where the left hand side functor category is localized at the levelwise equivalences.
\end{theorem}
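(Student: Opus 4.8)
Write $\gamma\colon C\to L(C)$ for the localization functor and apply $\Fun(I,-)$ to it. Since a natural transformation between functors $I\to L(C)$ is invertible precisely when it is so at each object of $I$, the composite $\Fun(I,C)\to\Fun(I,L(C))$ inverts the levelwise weak equivalences, hence factors, uniquely up to a contractible choice, through the localization, producing the comparison functor
\[
\Phi\colon L(\Fun(I,C))\longrightarrow \Fun(I,L(C)).
\]
The plan is to prove that $\Phi$ is an equivalence by a generators‑and‑mapping‑spaces argument. In the case of interest $C$ is combinatorial, so we may equip $\Fun(I,C)$ with the projective model structure; its weak equivalences are exactly the levelwise ones, so this does not change $L(\Fun(I,C))$.

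The first observation is that for each object $i$ of $I$ the evaluation functor $\mathrm{ev}_i\colon\Fun(I,C)\to C$ is simultaneously left and right Quillen for the projective model structure: it sits in the Quillen adjunctions $i_!\dashv\mathrm{ev}_i$ and $\mathrm{ev}_i\dashv i_*$, where $i_!$ and $i_*$ are left and right Kan extension along $\{i\}\hookrightarrow I$ (one checks directly that $i_!$ preserves cofibrations and trivial cofibrations and that $i_*$ preserves fibrations and trivial fibrations). Localizing yields adjunctions $\mathbf{L}i_!\dashv L(\mathrm{ev}_i)\dashv\mathbf{R}i_*$ between $L(C)$ and $L(\Fun(I,C))$ with $L(\mathrm{ev}_i)$ preserving all limits and colimits, and on the target one has the analogous $\infty$‑categorical adjunctions $i_!^{\infty}\dashv\mathrm{ev}_i^{\infty}\dashv i_*^{\infty}$ for $\Fun(I,L(C))$, with $\mathrm{ev}_i^{\infty}\circ\Phi\simeq L(\mathrm{ev}_i)$ by construction. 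Next, $\Phi$ preserves colimits: homotopy colimits in $\Fun(I,C)$ are computed levelwise because every $\mathrm{ev}_j$ is left Quillen, and $\Fun(I,\gamma)$ is levelwise $\gamma$, which carries homotopy colimits to colimits. Fixing a set $\{c_s\}_{s\in S}$ of objects generating $L(C)$ under colimits (here $C$ combinatorial is used, so that $L(C)$ is presentable; see \cite[Chapter~7]{cisinskibook}), the objects $\mathbf{L}i_!(c_s)$ generate $L(\Fun(I,C))$ under colimits — the $L(\mathrm{ev}_i)$ are jointly conservative, being computed levelwise, and have the left adjoints $\mathbf{L}i_!$ — while the $i_!^{\infty}(c_s)$ generate $\Fun(I,L(C))$ under colimits by the co‑Yoneda lemma (equivalently $\Fun(I,L(C))\simeq\Fun(I,\mathcal{S})\otimes L(C)$). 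These families match under $\Phi$: applying $\mathrm{ev}_j^{\infty}$ one finds $\mathrm{ev}_j^{\infty}(\Phi\,\mathbf{L}i_!\,c)\simeq L(\mathrm{ev}_j)(\mathbf{L}i_!\,c)\simeq\mathbf{L}(\mathrm{ev}_j\circ i_!)(c)\simeq\coprod_{I(i,j)}c\simeq\mathrm{ev}_j^{\infty}(i_!^{\infty}c)$, naturally in $j$, so that $\Phi\,\mathbf{L}i_!\simeq i_!^{\infty}$.

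It then remains to compare mapping spaces between generators. Using the two pairs of adjoints, for objects $i,i'$ of $I$ and $c,c'$ in $L(C)$,
\[
\Map_{L(\Fun(I,C))}(\mathbf{L}i_!\,c,\mathbf{L}i'_!\,c')\simeq\Map_{L(C)}\bigl(c,L(\mathrm{ev}_i)(\mathbf{L}i'_!\,c')\bigr)\simeq\Map_{L(C)}\Bigl(c,\coprod_{I(i,i')}c'\Bigr),
\]
and likewise $\Map_{\Fun(I,L(C))}(i_!^{\infty}c,i_!^{\infty}c')\simeq\Map_{L(C)}(c,\coprod_{I(i,i')}c')$, and the map induced by $\Phi$ is this identification. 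To conclude: $\Phi$ is a colimit‑preserving functor between presentable $\infty$‑categories, hence has a right adjoint $\Psi$; the previous computation shows the unit $\mathrm{id}\to\Psi\Phi$ is an equivalence on each generator $\mathbf{L}i_!\,c_s$, hence on every object, since a morphism which is an equivalence after $\Map(g,-)$ for every $g$ in a colimit‑generating family is an equivalence; thus $\Phi$ is fully faithful, and its essential image, being colimit‑closed and containing the $i_!^{\infty}c_s$, is all of $\Fun(I,L(C))$.

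The main obstacle is not an isolated step but the coherence of the matching of the two generating families: one must ensure that the pointwise identifications $\Phi\,\mathbf{L}i_!\simeq i_!^{\infty}$ and the induced mapping‑space comparisons are genuine equivalences of functors, and one must import the (standard but nontrivial) facts that $L(C)$ and $L(\Fun(I,C))$ are presentable with colimits computed by homotopy colimits, for which we rely on \cite[Chapter~7]{cisinskibook}. For a combinatorial \emph{simplicial} model category the whole statement is the rectification theorem \cite[Proposition~4.2.4.4]{lurie}; for an arbitrary cocomplete model category one runs the same argument, replacing presentability by the corresponding cocompleteness and generation statements of \cite[Chapter~7]{cisinskibook}.
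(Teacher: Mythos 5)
Your argument, as written, proves a weaker statement than the one claimed. The theorem is asserted for an arbitrary \emph{cocomplete} model category \(C\), and the paper's own proof is simply a citation of \cite[Theorem 7.9.8 and Remark 7.9.7]{cisinskibook}, a result whose proof works in that generality (it proceeds by quite different means, essentially reducing to direct categories, where Reedy-type structures exist over any model category, rather than through presentability). Every load-bearing step of your proof, by contrast, needs \(C\) combinatorial: the projective model structure on \(\Fun(I,C)\) requires cofibrant generation; presentability of \(L(C)\) and \(L(\Fun(I,C))\), the existence of a small family of colimit generators \(\{c_s\}\), and the adjoint functor theorem producing the right adjoint \(\Psi\) all require combinatoriality. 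Your closing sentence, that for a general cocomplete model category ``one runs the same argument, replacing presentability by the corresponding cocompleteness and generation statements,'' is not a proof and cannot be repaired along those lines: without cofibrant generation the projective structure need not exist, \(L(C)\) need not be generated under colimits by a set of objects, and there is no adjoint functor theorem available to produce \(\Psi\). So what you have is (modulo the points below) the rectification theorem for combinatorial model categories, essentially \cite[Proposition 4.2.4.4]{lurie}; this does happen to suffice for the application in this paper, since \(\mSet/A^\sharp\) with the coCartesian model structure is combinatorial by Theorem \ref{cocartmodelstructure}, but it does not establish the theorem as stated.

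Even within the combinatorial case, two points deserve more care. First, the identification \(\Phi\,\mathbf{L}i_!\simeq i_!^{\infty}\), together with its compatibility with the units and counits of the adjunctions \(\mathbf{L}i_!\dashv L(\mathrm{ev}_i)\) and \(i_!^{\infty}\dashv\mathrm{ev}_i^{\infty}\), is exactly where the content of the comparison lies; you flag it as ``the main obstacle'' but only assert it. Second, the final deduction is misstated: knowing that the unit \(\mathrm{id}\to\Psi\Phi\) is an equivalence on the generators does not propagate to all objects by closure under colimits, because \(\Psi\Phi\) need not preserve colimits and the generators need not be compact. The standard fix is to show, for each generator \(g=\mathbf{L}i_!c\) and \emph{every} object \(Y\), that \(\Map(g,Y)\to\Map(\Phi g,\Phi Y)\) is an equivalence (this follows from \(\mathrm{ev}_i^{\infty}\circ\Phi\simeq L(\mathrm{ev}_i)\) and the two adjunctions), and then to observe that the class of \(X\) such that \(\Map(X,Y)\to\Map(\Phi X,\Phi Y)\) is an equivalence for all \(Y\) is closed under colimits, since \(\Phi\) preserves colimits and \(\Map(-,Y)\) turns them into limits; essential surjectivity then goes as you say. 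These points are fixable, but the restriction to combinatorial \(C\) is a genuine gap relative to the statement being proved.
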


\begin{proof}
This is a direct consequence of \cite[Theorem 7.9.8 and Remark 7.9.7]{cisinskibook}.
\end{proof}

\begin{sub} Let \(\lambda\geq \kappa\) be any other inaccessible cardinal.
The theorem above implies that for any \(\lambda\)-small
category \(I\) and any \(\lambda\)-small simplicial set \(A\), the canonical functor
\begin{equation}\label{hocatlocaliztion}
\Fun(I,\mSet_\lambda/A^\sharp)\to ho\Fun(I,L(\mSet_\lambda/A^\sharp))
\end{equation}
is the (1-categorical) localization
of the functor category at the level wise coCartesian equivalences.
\end{sub}

\begin{sub}
We give another construction of this localization. Recall from \cite[Theorem 5.2]{cocartesiankim} that we have for any small category \(I\) and any simplicial set \(A\), a Quillen equivalence
\[
\Fun(I,\mSet/A^\sharp)\xrightarrow{\simeq} \mSet/I^\sharp \times A^\sharp
\]
Taking the homotopy category of the target and restricting to \(\lambda\)-small simplicial sets over \(A^\sharp\), we obtain
\[
\Fun(I,\mSet_\lambda/A^\sharp)\to \coCart_\lambda(I \times A)
\]
as the (\(1\)-categorical) localization of the functor category at the level wise coCartesian equivalences. Now recall from Proposition \ref{grothendieckcat} that the Grothendieck construction on homotopy categories provides an equivalence of categories
\[
\coCart_\lambda(I \times A)\simeq ho\Fun(I,\Fun(A,\Q_\lambda))
\]
To summarize, the composition
\begin{equation}\label{1localization}
\Fun(I,\mSet_\lambda/A^\sharp) \to \coCart_\lambda(I \times A)
\simeq ho\Fun(I,\Fun(A,\Q_\lambda))
\end{equation}
also defines the (1-categorical) localization of the functor category at the level wise coCartesian equivalences. Thus we have an equivalence of categories
\[
ho\Fun(I,L(\mSet_\lambda/A^\sharp)) \simeq ho\Fun(I,\Fun(A,\Q_\lambda))
\]
Since \(\Q_\kappa\) is a full subcategory of \(\Q_\lambda\), 
the functor category
\(\Fun(I,\Fun(A,\Q_\kappa))\) is the full subcategory of
\(\Fun(I,\Fun(A,\Q_\lambda))\) spanned by functors that take values in
\(\Q_\kappa\). This implies that the composition
\[
ho\Fun(I,L(\mSet_\kappa/A^\sharp))\to
ho\Fun(I,L(\mSet_\lambda/A^\sharp)) \simeq ho\Fun(I,\Fun(A,\Q_\lambda))
\]
factors through \(ho\Fun(I,\Fun(A,\Q_\kappa)\), thus induces
a functor
\begin{equation}\label{1localization2}
\Fun(I,\mSet_\kappa/A^\sharp)\to
ho\Fun(I,\Fun(A,\Q_\kappa))
\end{equation}
\end{sub}

\begin{sub}
Our next goal is to construct a comparison functor
\[
L(\mSet_\kappa/A^\sharp)\to \Fun(A,\Q_\kappa)
\]
To this end we consider \(I=\mSet_\kappa/A^\sharp\) and \(\lambda>\kappa\) so that \(I\)
is \(\lambda\)-small.
The image of the identity under the map \eqref{1localization2}
provides a functor
\[
\gamma_A\colon \mSet_\kappa/A^\sharp \to \Fun(A,\Q_\kappa)
\]
\end{sub}

\begin{lemma}
The functor \(\gamma_A\) sends coCartesian equivalences to equivalences of the functor category.
\end{lemma}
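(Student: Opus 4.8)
The functor $\gamma_A$ was constructed as the image of $\mathrm{id}_{\mSet_\kappa/A^\sharp}$ under the map \eqref{1localization2}, taking $I = \mSet_\kappa/A^\sharp$. Unwinding the construction, $\gamma_A$ factors through the localization $\coCart_\lambda(I \times A)$ (with $I = \mSet_\kappa/A^\sharp$), and the point is that the whole chain \eqref{1localization} is, by design, the $1$-categorical localization of the functor category $\Fun(I,\mSet_\lambda/A^\sharp)$ at the \emph{levelwise} coCartesian equivalences. So I would argue as follows: a coCartesian equivalence $f\colon S^+\to T^+$ in $\mSet_\kappa/A^\sharp$ is, qua object of $I$, a morphism; evaluating the identity natural transformation $\mathrm{id}\colon \mathrm{id}_I \Rightarrow \mathrm{id}_I$ at $f$ gives a commutative square in $I$ whose two parallel edges are $f$ itself — hence a levelwise coCartesian equivalence of functors $I \to \mSet_\kappa/A^\sharp$ is exactly the data needed. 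More precisely: the composite \eqref{1localization2} sends the identity functor to $\gamma_A$, and sends any morphism of functors which is a levelwise coCartesian equivalence to an isomorphism in $ho\Fun(I,\Fun(A,\Q_\kappa))$, because \eqref{1localization} is a localization at exactly that class. Applying this to the morphism $\mathrm{id}_I \to \mathrm{id}_I$ given objectwise by the coCartesian equivalence $f$ shows that $\gamma_A$ sends $f$ to an equivalence in $\Fun(A,\Q_\kappa)$.

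Concretely, the cleanest way to package this: let $f\colon S^+ \to T^+$ be a coCartesian equivalence between $\kappa$-small coCartesian fibrations over $A^\sharp$ (it suffices to treat fibrant objects, since every object of $\mSet_\kappa/A^\sharp$ is connected to a fibrant one by a coCartesian equivalence and $\gamma_A$ is already a functor). Viewing $S^+$ and $T^+$ as two objects of $I$ and $f$ as a morphism between them, consider the two evaluation functors $\mathrm{ev}_{S^+}, \mathrm{ev}_{T^+}\colon \Fun(I, \mSet_\lambda/A^\sharp) \to \mSet_\lambda/A^\sharp$ together with the natural transformation $\mathrm{ev}_f$ between them induced by $f$. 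Precomposing with the identity functor $\mathrm{id}_I \in \Fun(I,\mSet_\lambda/A^\sharp)$, the component $\mathrm{ev}_f(\mathrm{id}_I)$ is just $f$ again, which is a coCartesian equivalence; hence $\mathrm{ev}_f$ restricted to the image of $\mathrm{id}_I$ — but better: the morphism $\mathrm{id}_I \xrightarrow{f_*} \mathrm{id}_I'$ in $\Fun(I,\mSet_\lambda/A^\sharp)$ one wants is the one whose value at $S^+$ is $f$, not the identity. The correct statement is simply: because \eqref{1localization} is a localization at levelwise coCartesian equivalences, the value of $\gamma_A$ at any object is well-defined up to canonical equivalence and, applied to the diagram $[S^+ \xrightarrow{f} T^+]$ regarded inside $I$, the Grothendieck construction sends $f$ (which defines an isomorphism of objects in $\coCart_\kappa(A)$, since $f$ is a coCartesian equivalence) to an isomorphism in $ho\Fun(A,\Q_\kappa)$, i.e. an equivalence in $\Fun(A,\Q_\kappa)$.

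The cleanest phrasing avoids the evaluation-functor gymnastics entirely: $\gamma_A$ factors as $\mSet_\kappa/A^\sharp \to \coCart_\kappa(A) \to ho\Fun(A,\Q_\kappa) \to ho$ of the relevant category, where the first functor is the localization functor of the coCartesian model structure restricted to $\mSet_\kappa/A^\sharp$, and a coCartesian equivalence is inverted there by definition. Indeed, tracing through the construction: the chain \eqref{1localization} with $I = \ast$ (a point) is exactly $\mSet_\kappa/A^\sharp \to \coCart_\kappa(A) \simeq ho\Fun(A,\Q_\kappa)$, and the naturality of \eqref{1localization} in $I$ applied to the unique functor $I \to \ast$ identifies the value of $\gamma_A$ at an object $S^+$ with the image of $S^+$ under this point-level composite. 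Since $\coCart_\kappa(A)$ is by definition the homotopy category of the coCartesian model structure (restricted to $\kappa$-small fibers), coCartesian equivalences become isomorphisms there, hence $\gamma_A$ sends them to isomorphisms in $ho\Fun(A,\Q_\kappa)$, i.e. to equivalences in $\Fun(A,\Q_\kappa)$.

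**Main obstacle.** The genuine work is bookkeeping: one must verify that the functor $\gamma_A$ produced as "the image of the identity under \eqref{1localization2}" really does compute, on a given object $S^+ \in \mSet_\kappa/A^\sharp$, the functor $A \to \Q_\kappa$ classifying the coCartesian fibration (a fibrant replacement of) $S^+ \to A^\sharp$, and that this identification is natural enough that the naturality of \eqref{1localization} in $I$ can be invoked. This is where the Quillen equivalence of \ref{grothendieckquillen}/\cite[Theorem 5.2]{cocartesiankim} and Proposition \ref{grothendieckcat} have to be threaded together carefully; once that identification is in hand, inverting coCartesian equivalences is immediate from the fact that $\coCart_\kappa(A)$ is a homotopy category. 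I do not expect any hard $\infty$-categorical input here — only care with the definitions already assembled above.
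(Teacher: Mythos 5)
Your final packaging is essentially the paper's own proof: both arguments rest on the functoriality of the localization \eqref{1localization} in \(I\), which identifies post-composition with \(\gamma_A\) with the localization functor, so that for \(I=\ast\) the composite \(\mSet_\kappa/A^\sharp\to\coCart_\kappa(A)\simeq ho\Fun(A,\Q_\kappa)\) inverts coCartesian equivalences by construction. Just note that the naturality you need is restriction along \(\ast\to I\) (or along \(\Delta^1\to I\) classifying the given coCartesian equivalence, so as to handle morphisms and not only objects), not along the unique functor \(I\to\ast\); with that correction your argument coincides with the paper's.
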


\begin{proof}
The functoriality of \eqref{1localization} in \(I\) implies that this localization functor is induced by post composition with \(\gamma_A\). In particular, this implies that \(\gamma_A\) sends coCartesian equivalences to equivalences of the functor category.
\end{proof}

\begin{sub}
Let \(A\) be a \(\kappa\)-small simplicial set.
We get an induced functor
\[
L\gamma_A\colon L(\mSet_\kappa/A^\sharp)\to \Fun(A,\Q_\kappa)
\]
so that, for any \(\kappa\)-small category \(I\),
\[
\begin{tikzcd}
\Fun(I,\mSet_\kappa/A^\sharp)\rar{(\gamma_A)_\ast}\dar & \Fun(I,\Fun(A,\Q_\kappa))\\
\Fun(I,L(\mSet_\kappa/A^\sharp))\urar[swap]{(L\gamma_A)_\ast} &
\end{tikzcd}
\]
commutes. Taking homotopy categories, the vertical arrow is the localization \eqref{hocatlocaliztion}, while the horizontal arrow is the localization \eqref{1localization} for \(\lambda=\kappa\). Thus we have shown the following Lemma.
\end{sub}

\begin{lemma}\label{hocateq}
The functor
\[
(L\gamma_A)_\ast \colon \Fun(I,L(\mSet_\kappa/A^\sharp)) \to \Fun(I,\Fun(A,\Q_\kappa))
\]
induces an equivalence on homotopy categories.
\end{lemma}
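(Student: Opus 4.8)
The plan is to deduce the statement formally from the commutative triangle displayed just above the Lemma, together with the uniqueness of $1$-categorical localizations. Applying the homotopy category functor to that triangle yields a commutative triangle of ordinary categories whose common domain is the strict functor category $\Fun(I,\mSet_\kappa/A^\sharp)$, one leg going to $ho\Fun(I,L(\mSet_\kappa/A^\sharp))$ and the other to $ho\Fun(I,\Fun(A,\Q_\kappa))$, the third edge being the functor induced on homotopy categories by $(L\gamma_A)_\ast$, whose invertibility is at stake.

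First I would identify the vertical leg: by Theorem \ref{functorlocalization} applied to the cocomplete model category $\mSet_\kappa/A^\sharp$ equipped with the coCartesian model structure of Theorem \ref{cocartmodelstructure}, the functor $\Fun(I,\mSet_\kappa/A^\sharp)\to ho\Fun(I,L(\mSet_\kappa/A^\sharp))$ exhibits its target as the $1$-categorical localization of $\Fun(I,\mSet_\kappa/A^\sharp)$ at the levelwise coCartesian equivalences; this is exactly \eqref{hocatlocaliztion} specialized to $\lambda=\kappa$, which is admissible since $I$ and $A$ are $\kappa$-small. Next I would identify the other leg: the composite $\Fun(I,\mSet_\kappa/A^\sharp)\to ho\Fun(I,\Fun(A,\Q_\kappa))$ is precisely the functor \eqref{1localization} for $\lambda=\kappa$, obtained by passing to homotopy categories in the Quillen equivalence of \cite[Theorem 5.2]{cocartesiankim} and then applying the Grothendieck construction equivalence of Proposition \ref{grothendieckcat}; hence it too is the $1$-categorical localization of $\Fun(I,\mSet_\kappa/A^\sharp)$ at the same class of morphisms.

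Finally, since the two composites are localizations of one and the same category at one and the same class of weak equivalences, the universal property of localization supplies a canonical equivalence of categories between the two localizations compatible with the localization functors; commutativity of the triangle identifies this equivalence with the functor induced by $(L\gamma_A)_\ast$ on homotopy categories, which is therefore an equivalence, as claimed. I do not anticipate a genuine obstacle here; the only point requiring attention is the size bookkeeping, namely verifying that the choice $\lambda=\kappa$ is legitimate in \eqref{hocatlocaliztion} and \eqref{1localization}, which holds because $I$ and $A$ have been assumed $\kappa$-small. Everything else is the formal uniqueness of localizations.
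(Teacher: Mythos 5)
Your argument is correct and is essentially identical to the paper's: the paper also takes homotopy categories of the commutative triangle, identifies the vertical leg with the localization \eqref{hocatlocaliztion} and the composite leg with \eqref{1localization} for $\lambda=\kappa$ (legitimate since $I$ and $A$ are $\kappa$-small), and concludes by the universal property of $1$-categorical localization.
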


\begin{sub}
In particular we have that
\[
L\gamma_A\colon L(\mSet_\kappa/A^\sharp) \to \Fun(A,\Q_\kappa)
\]
induces an equivalence of homotopy categories. Our next goal is to show that it is actually an equivalence of \(\infty\)-categories. To this end, first note
that \(L(\mSet_\kappa/A^\sharp)\) is a cocomplete \(\infty\)-category since
it is the localization of a model category,
see for example \cite[Theorem 2.5.9]{bhhprocats}
or \cite[Proposition 7.7.4]{cisinskibook}. On the other hand, the functor category \(\Fun(A,\Q_\kappa)\) is cocomplete by Corollary \ref{cocompleteq}. 
\end{sub}

\begin{lemma}\label{cocontinuous}
The functor \(L\gamma_A\) preserves \(\kappa\)-small colimits.
\end{lemma}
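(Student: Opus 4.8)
The plan is to verify that \(L\gamma_A\) commutes with the colimit functor for every diagram indexed by (the nerve of) a \(\kappa\)-small category \(I\); since every \(\kappa\)-small colimit is a composite of \(\kappa\)-small coproducts and pushouts, this suffices. Concretely, I want to show that for any diagram \(D\colon I\to L(\mSet_\kappa/A^\sharp)\) the canonical comparison
\[
\mathrm{colim}_I\,(L\gamma_A)_\ast D\longrightarrow L\gamma_A\bigl(\mathrm{colim}_I D\bigr)
\]
is an equivalence in \(\Fun(A,\Q_\kappa)\). Colimits in \(\Fun(A,\Q_\kappa)\) are computed pointwise, and the evaluation functors \(\mathit{ev}_a\colon\Fun(A,\Q_\kappa)\to\Q_\kappa\) at the vertices \(a\) of \(A\) are jointly conservative and preserve colimits; so it is enough to see that the comparison becomes an equivalence after applying each \(\mathit{ev}_a\).

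Next I would unwind both sides. Using the Quillen equivalence of paragraph \ref{grothendieckquillen} (with their \(A\) our \(I\) and their \(B\) our \(A\)) together with Theorem \ref{functorlocalization}, a diagram \(D\colon I\to L(\mSet_\kappa/A^\sharp)\) is the same datum as a \(\kappa\)-small coCartesian fibration \(E\to I\times A\); under this translation \((L\gamma_A)_\ast D\) is the functor \(I\to\Fun(A,\Q_\kappa)\) classifying \(E\) (this is the content of Lemma \ref{hocateq} spelled out, cf.\ \eqref{1localization}), while \(\mathrm{colim}_I D\) is the homotopy colimit computed in the coCartesian model structure on \(\mSet_\kappa/A^\sharp\), that is, the coCartesian fibrant replacement over \(A^\sharp\) of the composite \(E\to I\times A\xrightarrow{q}A\), where \(q\) is the projection. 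Write \(j_a\colon I=I\times\{a\}\hookrightarrow I\times A\) and \(p\colon I\to\Delta^0\). Then: applying \(\mathit{ev}_a\) to the left-hand side gives, since \(\mathit{ev}_a\) preserves colimits, the colimit in \(\Q_\kappa\) of the diagram \(i\mapsto E_{(i,a)}\) classified by the coCartesian fibration \(j_a^\ast E\to I\), which by the Remark following Corollary \ref{cocompleteq} is obtained from \(j_a^\ast E\to I\) by inverting its coCartesian edges, i.e.\ it is \(\mathbf L p_!\,j_a^\ast E\); applying \(\mathit{ev}_a\) to the right-hand side gives the fibre at \(a\) of the coCartesian fibrant replacement of \(E\to A\), i.e.\ the derived base change \(a^\ast\,\mathbf L q_!\,E\).

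So the whole statement reduces to the Beck--Chevalley equivalence \(\mathbf L p_!\,j_a^\ast\simeq a^\ast\,\mathbf L q_!\) for the pullback square
\[
\begin{tikzcd}
I \rar{j_a} \dar[swap]{p} & I\times A \dar{q}\\
\Delta^0 \rar{a} & A
\end{tikzcd}
\]
This holds because the projection \(q\colon I\times A\to A\) is smooth in the sense of \cite{lurie}: base change along \(q\) carries marked left anodyne maps to marked left anodyne maps (it multiplies them by \(I^\sharp\), which preserves marked left anodyny by the pushout--product property), hence preserves coCartesian equivalences, so the Beck--Chevalley transformation for the coCartesian model structures is a weak equivalence; see \cite[\S3.1]{lurie} and \cite{cocartesiankim}. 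Granting this, the two computations agree, the comparison map becomes an equivalence after each \(\mathit{ev}_a\), and therefore \(L\gamma_A\) preserves \(\kappa\)-small colimits.

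I expect the last paragraph to be the only substantive point: matching the fibrewise colimit taken in \(\Q_\kappa\) with the fibre of the homotopy colimit taken in \(\mSet_\kappa/A^\sharp\) is exactly the smoothness/Beck--Chevalley property of the projection \(I\times A\to A\) --- precisely the sort of statement that the companion paper \cite{cocartesiankim} is designed to supply --- whereas everything else is bookkeeping with the Quillen equivalences and localization results already established.
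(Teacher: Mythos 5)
Your overall strategy (reduce to diagrams indexed by nerves of \(\kappa\)-small categories, detect equivalences pointwise in \(\Fun(A,\Q_\kappa)\), translate a diagram into a coCartesian fibration \(E\to I\times A\) and model \(\mathrm{colim}_I D\) by a fibrant replacement of \(E\to A\)) is coherent in outline, but the step carrying all the weight is not actually proved. What you need at the end is: for a coCartesian fibration \(E\to I\times A\) and a marked left anodyne fibrant replacement \(E^\natural\to R^\natural\) over \(A^\sharp\), the induced map on fibres \(E^\natural_{I\times\{a\}}\to R^\natural_a\) is an equivalence over the point. Your justification --- pullback along the projection \(q\colon I\times A\to A\) preserves marked left anodyne maps, hence coCartesian equivalences --- concerns the wrong functor: the comparison you must control is a pullback along the vertex inclusion \(a\colon\Delta^0\to A\), and pullback along a vertex does \emph{not} preserve coCartesian equivalences (already the marked left anodyne map \(\{0\}\to(\Delta^1)^\sharp\) over \((\Delta^1)^\sharp\) has fibre \(\emptyset\to\Delta^0\) over the vertex \(1\)). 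Passing from smoothness of \(q\) to this fibrewise statement is exactly the content of a smooth/proper base change (Beck--Chevalley) theorem for these model structures; the statement is true, and it is indeed the kind of result the companion paper \cite{cocartesiankim} provides, but it is a genuine theorem that has to be invoked with a precise reference (or proved, e.g.\ in the style of Lemma \ref{smooth}), not a formal consequence of the pushout--product property. A second, smaller debt: your identification of \(\mathrm{colim}_I D\) with the fibrant replacement of the composite \(E\to I\times A\to A\), i.e.\ of the derived colimit with \(\mathbf{L}q_!\) under the Quillen equivalence of paragraph \ref{grothendieckquillen}, is the relative (over \(A\)) analogue of the argument in Corollary \ref{cocompleteq} and also deserves an argument rather than an assertion.

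For comparison, the paper's proof avoids all of this geometric input: \(L\gamma_A\) preserves \(I\)-indexed colimits if and only if the commutative square formed by \(L\gamma_A\), \((L\gamma_A)_\ast\) and the two constant-diagram functors satisfies the Beck--Chevalley condition, and invertibility of the corresponding mate can be tested in homotopy categories, where both horizontal functors are equivalences by Lemma \ref{hocateq}; the condition then holds for purely formal reasons. Your route, once the base change theorem is properly cited, does yield the more explicit fibrewise description of how \(L\gamma_A\) interacts with colimits (essentially re-proving the Remark after Corollary \ref{cocompleteq} relative to \(A\)), but as written the key step is a gap, whereas the paper's argument needs no smoothness or fibrewise computation at all.
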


\begin{proof}
First note that by \cite[Proposition 7.3.23]{cisinskibook},
it suffices to show that \(L\gamma_A\) preserves colimits indexed by (nerves of)
\(\kappa\)-small categories.
Thus, let \(I\) be a \(\kappa\)-small category. We have the commutative square
\begin{equation}\label{bcsquare}
\begin{tikzcd}[column sep=large]
L(\mSet_\kappa/A^\sharp)\rar{L\gamma_A} \dar & \Fun(A,\Q_\kappa)\dar\\
\Fun(I,L(\mSet_\kappa/A^\sharp)) \rar{(L\gamma_A)_\ast} & \Fun(I,\Fun(A,\Q_\kappa))
\end{tikzcd}
\end{equation}
where the vertical arrows are given by taking constant diagrams.
Since \(L(\mSet_\kappa/A^\sharp)\) and \(\Fun(A,\Q_\kappa)\) are cocomplete, the vertical arrows admit left adjoints. The functor \(L\gamma_A\) preserves colimits if and only if the induced square after taking left adjoints commutes (up to equivalence). In other words, if and only if the square (\ref{bcsquare}) satisfies the Beck-Chevalley condition. This can be checked by taking homotopy categories. By Lemma \ref{hocateq}, the horizontal arrows become equivalences after taking homotopy categories, so that the induced square does indeed satisfy the Beck-Chevalley condition, hence \(L\gamma_A\) preserves colimits.
\end{proof}

\begin{sub}
This already suffices to prove the main result of this section.
\end{sub}

\begin{theorem}\label{thm:straightening}
The map \(L\gamma_A\) induces an equivalence of \(\infty\)-categories
\[
L(\mSet_\kappa/A^\sharp) \cong \Fun(A,\Q_\kappa)\, .
\]
\end{theorem}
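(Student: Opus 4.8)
The plan is to upgrade the equivalence of homotopy categories established in Lemma~\ref{hocateq} to an equivalence of $\infty$-categories, by combining cocontinuity with the explicit behaviour of $L\gamma_A$ on representables. By cocartesian invariance (Theorem~\ref{cocartesianinvariance}) and the fact that $\Q_\kappa$ is an $\infty$-category, we may assume throughout that $A$ is an $\infty$-category.

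First collect the structural facts at hand: $L(\mSet_\kappa/A^\sharp)$ is the localization of a combinatorial model category, hence presentable, in particular cocomplete; $\Fun(A,\Q_\kappa)$ is cocomplete by Corollary~\ref{cocompleteq}; and $L\gamma_A$ is cocontinuous by Lemma~\ref{cocontinuous}. Applying Lemma~\ref{hocateq} with $I=[0]$, the functor $L\gamma_A$ induces an equivalence $ho(L(\mSet_\kappa/A^\sharp))\to ho\Fun(A,\Q_\kappa)$; in particular $L\gamma_A$ is essentially surjective. So it remains to prove that $L\gamma_A$ is fully faithful.

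Since $L(\mSet_\kappa/A^\sharp)$ is generated under colimits by the images of the representables of the presheaf category $\mSet/A^\sharp$, i.e.\ by the objects $G$ given by the (marked) simplices $\Delta^n\to A$, and since $L\gamma_A$ preserves colimits while $\Map(\mathrm{colim}_i x_i,y)\simeq\lim_i\Map(x_i,y)$ in both $\infty$-categories, it is enough to show that for every simplex $\sigma\colon\Delta^n\to A$ and every coCartesian fibration $y\to A$ the comparison map
\[
\Map_{L(\mSet_\kappa/A^\sharp)}(G,y)\longrightarrow\Map_{\Fun(A,\Q_\kappa)}(L\gamma_A G,\,L\gamma_A y)
\]
is an equivalence of $\infty$-groupoids; here $L\gamma_A y$ is the classifying functor $F_y\colon A\to\Q_\kappa$ of $y$. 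By construction of $\gamma_A$, the object $L\gamma_A G$ is the classifying functor of the coCartesian fibrant replacement of $(\Delta^n)^\flat\to A$, namely the free coCartesian fibration on $\sigma$; for $n=0$ and a vertex $a$ this is the slice $A_{a/}\to A$, which by the theory of left fibrations (cf.~\cite[Ch.~5]{cisinskibook}) is classified by the corepresentable functor $\Map_A(a,-)\colon A\to\spaces\subset\Q_\kappa$, and for general $n$ it is the corresponding colimit of such corepresentables along $\sigma$. On the source side, $\Map_{L(\mSet_\kappa/A^\sharp)}(G,y)$ is the derived mapping space $\Map^h_{\mSet/A^\sharp}((\Delta^n)^\flat,y^\natural)$, which --- restricting the coCartesian model structure to fibres --- computes the space of coCartesian sections of $\sigma^\ast y\to\Delta^n$, that is, (an iterated limit of) the cores $(y_{\sigma(i)})^\simeq$ of the fibres. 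On the target side, using that $\spaces\hookrightarrow\Q_\kappa$ is a colimit-preserving left adjoint whose right adjoint is the core functor, ordinary Yoneda identifies $\Map_{\Fun(A,\Q_\kappa)}(\Map_A(a,-),F_y)$ with $(F_y(a))^\simeq=(y_a)^\simeq$, and the general case follows by passing to the colimit along $\sigma$. Under these identifications the comparison map is an equivalence, which gives full faithfulness and hence the theorem.

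The main obstacle is this last step: pinning down $L\gamma_A$ on representables and matching the two mapping spaces on the nose, while being careful not to invoke the coCartesian straightening correspondence we are proving. The point is that on representables everything reduces to the straightening for \emph{left} fibrations, which is available from \cite{cisinskibook}, together with the adjunction between $\infty$-groupoids and $\infty$-categories given by the core, so no circularity is incurred. (Alternatively, one may package the final reduction via $\mathrm{Ind}$-completions: the computation above shows $L\gamma_A$ preserves the compact generators and restricts to an equivalence on compact objects, hence $L\gamma_A\simeq\mathrm{Ind}(L\gamma_A|_{\mathrm{compact}})$ is itself an equivalence.)
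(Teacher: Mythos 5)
There is a genuine gap, and it sits exactly where you locate the ``main obstacle'': the computation of $L\gamma_A$ and of the derived mapping spaces on the flat generators $(\Delta^n)^\flat\to A$. Your reduction implicitly treats the coCartesian model structure on $\mSet_\kappa/A^\sharp$ as if it behaved like the covariant model structure for left fibrations, where a simplex is equivalent to its initial vertex and every representable straightens to a space-valued corepresentable. This fails for $n\geq 1$. The fibrant replacement of $(\Delta^n)^\flat\to A^\sharp$ is, as you say, the free coCartesian fibration on $\sigma$, but its classifying functor is \emph{not} a colimit of corepresentables $\Map_A(a,-)\colon A\to\spaces$: for $A=\Delta^1$ and $\sigma=\mathrm{id}$, the free coCartesian fibration has fibre $\Delta^1$ over the vertex $1$, so its straightening is the functor $0\mapsto\Delta^0,\ 1\mapsto\Delta^1$ (the inclusion of the initial vertex), which does not take values in $\spaces$; since $\spaces\subset\Q$ is closed under colimits (the inclusion has the core as right adjoint), no colimit of corepresentables can equal it. Dually, the derived mapping space $\Map^\sharp_A((\Delta^n)^\flat,y^\natural)$ is the maximal groupoid of the category of \emph{all} sections of $\sigma^*y\to\Delta^n$ (with coCartesian natural transformations), not an iterated limit of cores of fibres, because the marking on $(\Delta^n)^\flat$ is flat: for $y\to\Delta^1$ classified by $f\colon C\to D$ this is $k(f\!\downarrow\! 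D)$, which already for $C=\Delta^0$, $D=\Delta^1$ differs from $k(C)$. (You can also check directly that $\{0\}\to(\Delta^1)^\flat$ is not a marked left anodyne map, nor a coCartesian equivalence over $(\Delta^1)^\sharp$, unlike $\{0\}\to(\Delta^1)^\sharp$.) So the claim that ``everything reduces to straightening for left fibrations together with the core adjunction'' is false; the category-valued generators are exactly where the coCartesian case exceeds the left-fibration case, and a Yoneda-type identification for them would need something like Theorem \ref{diruniv} as input, not just \cite[Chapter~5]{cisinskibook}. (Two smaller points: generation of $\mSet/A^\sharp$ under colimits also requires the generator $(\Delta^1)^\sharp$, and the closure of the full subcategory on which the comparison is an equivalence under colimits should be argued via cell induction; these are repairable, the generator computation is not, as stated.)

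For contrast, the paper sidesteps any computation on generators: having Lemma \ref{hocateq} (equivalence of homotopy categories, functorially in $I$) and Lemma \ref{cocontinuous} (preservation of $\kappa$-small colimits), it concludes directly by the recognition theorem of \cite[Corollary 3.3.5]{nrsadjoint} or \cite[Theorem 7.6.10]{cisinskibook}: a colimit-preserving functor between cocomplete $\infty$-categories inducing an equivalence of homotopy categories is an equivalence. Your first two paragraphs reproduce exactly the hypotheses of that theorem; invoking it (or proving its mechanism, which recovers mapping spaces from homotopy categories using colimits) is the missing step that would replace the faulty generator analysis.
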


\begin{proof}
By Lemma \ref{hocateq} and Lemma \ref{cocontinuous}, \(L\gamma_A\) is a colimit preserving functor between cocomplete \(\infty\)-categories
inducing an equivalence of homotopy categories.
Therefore, by virtue of \cite[Corollary 3.3.5]{nrsadjoint} or \cite[Theorem 7.6.10]{cisinskibook} it is an equivalence of \(\infty\)-categories.
\end{proof}

\begin{remark}
By virtue of \cite[Proposition 7.10.13]{cisinskibook}, the theorem above allows
to compute the mapping space of the \(\infty\)-category \(\Fun(A,\Q)\) in terms
of the mapping space of the coCartesian model structure on \(\mSet_\kappa/A^\sharp\):
given two coCartesian fibrations \(X\to A\) and \(Y\to A\) classified by two functors
\(F:A\to\Q\) and \(G:A\to\Q\), respectively, the mapping space
\(\Fun(A,\Q)(F,G)\) is canonically equivalent to the maximal Kan complex of
the \(\infty\)-category \(\Map^\sharp_A(X,Y)\)
of coCartesian functors \(X\to Y\) over \(A\).
From there, we deduce
easily that the
identification of Corollary \ref{mappingspaceequivalence}
is indeed induced by the equivalence of Theorem \ref{thm:straightening}.
In particular, it is functorial in each variable.
Since the homotopy coherent nerve of the simplicial category of fibrant
objects associated to the model structure \(\mSet_\kappa/A^\sharp\)
is an explicit model of the localization \(L(\mSet_\kappa/A^\sharp)\)
by \cite[Prop.~1.2.1]{hinich},
this also means that the \(\infty\)-category of \(\kappa\)-small
\(\infty\)-categories considered by Lurie \cite{lurie,kerodon} is indeed
(equivalent to) \(\Q_\kappa\).
\end{remark}

\begin{sub}
Let \(\mathcal{A}\) be the object of \(\Q=\Q_\kappa\)
corresponding to \(A\), so that we have
a pullback square of the form below.
\[
\begin{tikzcd}
A\ar{r}\ar{d}&\Q_{\bullet}\ar{d}{\quniv}\\
\Delta^0\ar{r}{\mathcal{A}}&\Q
\end{tikzcd}
\]
By virtue of \cite[Corollary 7.6.13]{cisinskibook},
the equivalence \(L(\mSet_\kappa)\simeq\Q\) induces an equivalence of
\(\infty\)-categories
\begin{equation}\label{eq:slicecat}
L(\mSet_{\kappa/A})\cong \Q_{/\mathcal{A}}
\end{equation}
where \(\mSet_{\kappa/A}\) is the category of \(\kappa\)-small
marked simplicial
sets over \(A^\sharp\) equipped with the sliced model structure
(where the weak equivalences are defined as those maps
that are weak equivalences in \(\mSet\)).
Therefore, up to isomorphism,
any object of \(\Q_{/\mathcal{A}}\), seen as a map
\(p\colon \mathcal{X}\to \mathcal{A}\),
comes from an isofibration \(\tilde p\colon X\to A\) in \(\mSet_\kappa\).
The property that \(\tilde p\) is a coCartesian fibration is well defined
up to equivalence; this is thus a property of \(p\).
Similarly any map in \(\Q_{/\mathcal{A}}\) can be described as a map
between isofibrations over \(A\), and the property that
such a map is coCartesian is a well defined property
of the corresponding map in \(\Q_{/\mathcal{A}}\).
We denote by \(\coCarti(\mathcal{A})=\coCarti_\kappa(\mathcal{A})\)
the $\infty$-subcategory of slice \(\infty\)-category \(\Q_{/\mathcal{A}}\)
who objects are coCartesian fibrations and maps are coCartesian;
in other words, as a simplicial set, the \(n\)-simplices of \(\coCarti(A)\)
are those functors \(u\colon\Delta^n\to\Q_{/\mathcal{A}}\) such that, for \(0<i\leq n\),
the restriction of \(u\) to \(\Delta^{\{i-1,i\}}\) corresponds to a commutative
triangle of \(\Q\) of the form
\[
\begin{tikzcd}
\mathcal{X}\ar{rr}{f}\drar[swap]{p} & & \mathcal{Y}\dlar{q}\\
& \mathcal{A} &
\end{tikzcd}
\]
with \(p\) and \(q\) coCartesian fibrations, and \(f\) a
functor that preserves coCartesian edges. 
\end{sub}

\begin{theorem}\label{thm:straightening2}
For any \(\kappa\)-small \(\infty\)-category \(A\),
corresponding to an object \(\mathcal{A}\) of \(\Q\),
there is a canonical
equivalence of \(\infty\)-categories
\[
\coCarti(\mathcal{A})\cong\Fun(A,\Q)\, .
\]
\end{theorem}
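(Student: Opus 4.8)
The plan is to exhibit both $\coCarti(\mathcal{A})$ and $\Fun(A,\Q)$ as the localization of the same category at the same class of weak equivalences. Let $\mathcal{C}$ be the category of coCartesian fibrations over $A$ with their coCartesian edges marked, i.e.\ the full subcategory of fibrant objects of the coCartesian model structure of Theorem~\ref{cocartmodelstructure} on $\mSet_\kappa/A^\sharp$, and let $W_\mathcal{C}$ be the class of coCartesian equivalences between such objects. Since every object of $\mSet_\kappa/A^\sharp$ is cofibrant, the $\infty$-categorical localization of this model category agrees with the localization of its subcategory of fibrant objects (a standard fact, cf.\ \cite[Prop.~1.2.1]{hinich}), so Theorem~\ref{thm:straightening} yields an equivalence $\Fun(A,\Q)\simeq L(\mathcal{C},W_\mathcal{C})$.

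Next I would observe that on $\mathcal{C}$ the coCartesian equivalences coincide with the maps that are categorical equivalences of underlying simplicial sets: an object of $\mathcal{C}$ is in particular an isofibration over $A$, a coCartesian equivalence of coCartesian fibrations over $A$ is a fibrewise categorical equivalence, hence --- both maps to $A$ being isofibrations --- a categorical equivalence, and conversely a coCartesian functor over $A$ which is a categorical equivalence restricts to a fibrewise categorical equivalence. In particular $W_\mathcal{C}$ is contained in the class of weak equivalences of the sliced model structure on $\mSet_{\kappa/A}$, whose localization is $\Q_{/\mathcal{A}}$ by \eqref{eq:slicecat}; thus the composite $\mathcal{C}\hookrightarrow\mSet_{\kappa/A}\to\Q_{/\mathcal{A}}$ inverts $W_\mathcal{C}$ and factors through a functor $\Psi\colon L(\mathcal{C},W_\mathcal{C})\to\Q_{/\mathcal{A}}$. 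By construction $\Psi$ lands in the subcategory $\coCarti(\mathcal{A})$: objects of $\mathcal{C}$ are coCartesian fibrations, morphisms of $\mathcal{C}$ preserve coCartesian edges, and any morphism of $L(\mathcal{C},W_\mathcal{C})$ is, in $\Q_{/\mathcal{A}}$, a composite of such morphisms with inverses of coCartesian equivalences of coCartesian fibrations, whose inverses over $A$ again preserve coCartesian edges. Combining, we obtain a canonical functor $\Psi\colon\Fun(A,\Q)\to\coCarti(\mathcal{A})$.

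It remains to check that $\Psi$ is an equivalence. Essential surjectivity is immediate, since every object of $\coCarti(\mathcal{A})$ is represented, up to equivalence, by a coCartesian fibration $X\to A$, the image of the corresponding object of $\mathcal{C}$. For full faithfulness one compares mapping $\infty$-groupoids. On the source, the mapping space of $\Fun(A,\Q)$ between the functors classifying $X$ and $Y$ is the $\infty$-groupoid $\Map^\sharp_A(X,Y)$ of coCartesian functors $X\to Y$ over $A$, by Corollary~\ref{mappingspaceequivalence}. On the target, $\coCarti(\mathcal{A})$ is a sub-$\infty$-category of the slice $\Q_{/\mathcal{A}}$ closed under equivalences, so $\Map_{\coCarti(\mathcal{A})}(X,Y)$ is the union of those connected components of $\Map_{\Q_{/\mathcal{A}}}(X,Y)$ consisting of coCartesian functors; since $\Map_{\Q_{/\mathcal{A}}}(X,Y)$ is the $\infty$-groupoid of all functors $X\to Y$ over $A$ (the mapping space of an $\infty$-categorical slice, computed using that $Y\to A$ is an isofibration), and since being a coCartesian functor is invariant under equivalence of functors over $A$, this is again $\Map^\sharp_A(X,Y)$. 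One then checks that $\Psi$ realises this identification, so that $\Psi$ is fully faithful, hence an equivalence.

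The step I expect to be the main obstacle is this last mapping-space comparison: matching, compatibly with $\Psi$, the $\infty$-groupoid $\Map^\sharp_A(X,Y)$ coming from the coCartesian model structure (via Corollary~\ref{mappingspaceequivalence}) with the relevant union of path components of the mapping space of the $\infty$-categorical slice $\Q_{/\mathcal{A}}$. The remaining ingredients --- that $W_\mathcal{C}$ is the restriction of both ambient classes of weak equivalences, and that $\Psi$ is well defined and takes values in $\coCarti(\mathcal{A})$ --- are essentially formal once Theorem~\ref{thm:straightening} and the identification $L(\mSet_{\kappa/A})\simeq\Q_{/\mathcal{A}}$ of \eqref{eq:slicecat} are in hand.
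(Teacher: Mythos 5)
Your construction of the comparison functor coincides with the paper's: restrict the localization functor $\mSet_{\kappa/A}\to\Q_{/\mathcal{A}}$ of \eqref{eq:slicecat} to the fibrant objects of the model structure of Theorem \ref{cocartmodelstructure}, note that coCartesian equivalences between such objects are sliced weak equivalences (the coCartesian structure being a left Bousfield localization of the sliced one -- this, rather than your claim that a fibrewise equivalence between isofibrations over $A$ is a categorical equivalence, which is false for general isofibrations, is the clean justification), check the image lies in $\coCarti(\mathcal{A})$, and precompose with $\Fun(A,\Q)\simeq L(\mSet_\kappa/A^\sharp)$ from Theorem \ref{thm:straightening}. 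Where you diverge is in proving that this functor is an equivalence, and there the step you flag is a genuine gap, not a routine verification. Knowing that $\Fun(A,\Q)(F,G)\simeq\Map^\sharp_A(X,Y)$ (Corollary \ref{mappingspaceequivalence}) and that the mapping space of the subcategory $\coCarti(\mathcal{A})$ is abstractly the union of the coCartesian components of the slice mapping space, hence again of the homotopy type of $\Map^\sharp_A(X,Y)$, says nothing yet about the particular map induced by $\Psi$. The identification of Corollary \ref{mappingspaceequivalence} is produced by the Grothendieck construction/directed univalence and is a priori unrelated to $\gamma_A$, hence to $\Psi$; the paper only establishes that it is induced by $L\gamma_A$ in the Remark following Theorem \ref{thm:straightening}, as a consequence of that theorem (via \cite[Prop.~7.10.13]{cisinskibook}). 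Likewise your computation of $\Map_{\Q_{/\mathcal{A}}}$ as the $\infty$-groupoid of all functors over $A$ relies on the description of mapping spaces of $\Q$, which comes from the same source. So the compatibility ``$\Psi$ realises this identification'' is exactly the content that is missing, and assuming it risks circularity.

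This is precisely the bookkeeping the paper's proof is designed to avoid: it establishes only an equivalence of homotopy categories, $ho(L(\coCarti^\prime(A)))\cong\coCart_\kappa(A)\to ho(\coCarti(\mathcal{A}))$, which is elementary because both sides have coCartesian fibrations as objects and homotopy classes of coCartesian maps over $A$ as morphisms, and then upgrades this to an equivalence of $\infty$-categories using that the comparison functor is left exact (left exactness of the localization functor $\mSet_{\kappa/A}\to\Q_{/\mathcal{A}}$, conservativity and finite-limit-preservation of $\coCarti(\mathcal{A})\subset\Q_{/\mathcal{A}}$, and the Bousfield localization observation) together with the recognition criterion of \cite[Cor.~3.3.5]{nrsadjoint} or \cite[Thm.~7.6.10]{cisinskibook}. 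If you prefer to keep your route, the way to close the gap is to drop Corollary \ref{mappingspaceequivalence} as an input and compute both mapping spaces by the same device: by \cite[Prop.~7.10.13]{cisinskibook} the mapping space of $L(\mSet_\kappa/A^\sharp)$ between the fibrant objects $X^\natural$, $Y^\natural$ is $\Map^\sharp_A(X,Y)$, the mapping space of $\Q_{/\mathcal{A}}=L(\mSet_{\kappa/A})$ is computed analogously in the sliced structure, and since the coCartesian structure is a left Bousfield localization of the sliced one with $X^\natural$, $Y^\natural$ local, the map induced by $\Psi$ on mapping spaces is then visibly the canonical comparison; but this still requires an argument identifying the latter mapping space with that of the subcategory $\coCarti(\mathcal{A})$, so the paper's homotopy-category shortcut remains the more economical option.
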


\begin{proof}
Let us write \(\coCarti^\prime(A)\) for the full
subcategory of \(\mSet_\kappa/A^\sharp\) spanned by coCartesian
fibrations \(X\to A\) (with \(\kappa\)-small fibers).
Inverting fiberwise equivalences induces an equivalence of \(\infty\)-categories
\[
L(\coCarti^\prime(A))\cong L(\mSet_\kappa/A^\sharp)
\]
and thus an equivalence of \(\infty\)-categories
\[
L(\coCarti^\prime(A))\cong \Fun(A,\Q)
\]
by Theorem \ref{thm:straightening}.
By virtue of \cite[Theorem 7.5.18]{cisinskibook}, the localization functor
(from equivalence \eqref{eq:slicecat} above)
\[
\mSet_{\kappa/A}\to\Q_{/\mathcal{A}}
\]
is left exact in the sense of \cite[Definition 7.5.2]{cisinskibook}.
Therefore, since the embedding functor
\(\coCarti(\mathcal{A})\to\Q_{/\mathcal{A}}\)
is conservative and preserves finite limits (it obviously preserves
the terminal object as well a pullbacks, so that we may apply
\cite[Proposition 7.3.28]{cisinskibook}, for instance),
the preceding functor induces a functor
\[
\coCarti^\prime(A)\to\coCarti(\mathcal{A})
\]
which is left exact as well (observe that the
model structure of Theorem \ref{cocartmodelstructure}
is a left Bousfield localization of the sliced model
structure). The homotopy category
\[
ho(L(\coCarti^\prime(A)))\cong\coCart_\kappa(A)
\]
is easy to describe:
the objects are coCartesian fibrations with \(\kappa\)-small fibers
\(X\to A\) and maps are homotopy classes of maps in \(\mSet\)
over \(A^\sharp\).
It is an elementary observation that the induced
functor 
\[
ho(L(\coCarti^\prime(A)))\to ho(\coCarti(\mathcal{A}))
\]
is an equivalence of categories.
Therefore, by virtue of \cite[Corollary 3.3.5]{nrsadjoint}
or \cite[Theorem 7.6.10]{cisinskibook},
the induced functor
\[
\Fun(A,\Q)\cong L(\coCarti^\prime(A))\to\coCarti(\mathcal{A})
\]
is an equivalence of \(\infty\)-categories.
\end{proof}

\begin{remark}
The title of this section claims straightening as a consequence
of directed univalence, but we never explicitly make the connection.
Instead we use the Grothendieck construction, i.e. the fact that
we have a functorial equivalence on homotopy categories between
the cocartesian model structure and the functor category (Proposition \ref{grothendieckcat}). In the previous section,
we used this to prove directed univalence (Theorem \ref{diruniv}),
but it is an easy exercise to
see that Proposition \ref{grothendieckcat} follows straight away from Theorem \ref{diruniv}.
In other words, directed univalence is equivalent to Proposition \ref{grothendieckcat}. Since
Theorem \ref{diruniv} is the poor man version of the Straightening/Unstraightening correspondence
(formulated as Theorem \ref{thm:straightening} or Theorem \ref{thm:straightening2}), all this means that
directed univalence expresses nothing else than Straightening/Unstraightening.
\end{remark}

\bibliographystyle{alpha}
\bibliography{directreferences}

\end{document}